%% file: 00_main_arxiv.tex
\documentclass[a4paper,11pt,reqno]{amsart}

\input{00_macros}

\begin{document}

\title[Exact Mixed-Integer Conic Liftings for Queueing-Based CDN Design]{\Large Exact Mixed-Integer Conic Liftings for Queueing-Based Content Delivery Network Design}

\author[V. Blanco and M. Mart\'inez-Ant\'on]{
{\large V\'ictor Blanco$^{\dagger}$ and Miguel Mart\'inez-Ant\'on$^{\ddagger}$}\bigskip\\
\small $^\dagger$Institute of Mathematics (IMAG), Universidad de Granada\\
\small $^\ddagger$Institute of Mathematics (IMUS), Universidad de Sevilla\bigskip\\
\small \texttt{vblanco@ugr.es}, \texttt{mmartinez31@us.es}
}

\maketitle

\begin{abstract}
We study a class of content delivery network design problems in which service performance is explicitly modeled through queueing-based response times. In contrast to standard formulations relying on distance-based approximations, the resulting models incorporate congestion effects and give rise to nonlinear expressions involving ratios of affine mappings, reciprocal terms, and interactions across multiple traffic classes. To address these challenges, we develop a systematic framework based on mixed-integer conic liftings that enables exact reformulations of such expressions within a tractable optimization paradigm. The proposed approach combines McCormick-type envelopes and second-order cone representations to derive mixed-integer conic programming formulations that jointly capture geometric design decisions and stochastic service dynamics within the same system. The framework is first introduced in a single-server setting, where the optimal solution exhibits a connection with the classical Fermat--Weber problem, and is then extended to the multi-server case, leading to a general formulation that simultaneously optimizes server locations, capacity allocation, and routing decisions. Computational experiments on a realistic case study illustrate the impact of congestion-aware modeling on network design and demonstrate the effectiveness of the proposed formulations when solved using modern conic optimization solvers.
\end{abstract}
 
\keywords{Conic liftings, Mixed-integer conic optimization, Combinatorial optimization, Queueing theory, Content delivery network, Network design}

\section{Introduction}

Mixed-integer conic optimization provides a unified framework for modeling problems that combine discrete decisions with nonlinear convex structures \citep{BenTalNemirovski2001,BoydVandenberghe2004}. A central aspect of this framework is the construction of extended formulations in which nonlinear expressions are lifted into higher-dimensional spaces and represented through conic constraints \citep{conforti2010extended,vielma2015mixed}. Such representations are particularly effective when the underlying models involve ratios, reciprocal terms, or interactions between multiple variables, since many of these functions admit exact or tight conic representations through standard cones. More generally, a broad class of convex optimization problems can be expressed, or arbitrarily well approximated, using combinations of a small family of canonical cones, often referred to as the \emph{Big Five}, namely the nonnegative orthant, second-order cone, positive semidefinite cone, power cone, and exponential cone \citep{bertsimas2026cone}. This expressive power explains the broad applicability of conic optimization as a modeling paradigm.

Nevertheless, a fundamental challenge in this area is to identify classes of optimization problems whose intrinsic nonlinear structure can be systematically represented through exact conic liftings while preserving low representational complexity. Although many standard convex sets and functions admit well-known conic formulations, more intricate nonlinear expressions arising in stochastic systems, congestion models, and service-network optimization are often handled through {\it ad hoc} reformulations or piecewise-linear approximations \citep{pham2025exponential,atamturk2010conic}. In particular, queueing-based optimization models naturally generate rational expressions involving reciprocal load factors, ratios of affine mappings, and nonlinear interactions between traffic and service variables, whose exact representability is far from immediate. Developing optimization frameworks in which conic lifting is not merely a computational device, but rather the natural representational language of the underlying stochastic structure, remains an important direction of research in mixed-integer conic optimization.

This paper develops such a framework in the context of the design of content delivery networks (CDNs for short), which constitutes a fundamental component of modern Internet infrastructure~\citep{pallis2006insight,verma2003cdn}. CDNs aim to deliver digital content, such as web pages, video streams, software updates, and cloud services, to geographically distributed users with low latency and high reliability. To achieve this objective, content is replicated across a set of distributed surrogate servers, referred to as \emph{edge servers}, which are deployed closer to users than the original repositories, known as \emph{origin servers}. When a user issues a request, it is routed to an appropriate edge server capable of serving the content locally whenever possible, while cache misses require retrieving the content from the origin layer.

This architecture significantly reduces end-to-end latency, alleviates network congestion, and improves scalability~\citep{liu2012optimizing}. As a consequence, CDNs play a central role in modern digital services, including large-scale video streaming platforms~\citep{ghabashneh2020exploring,krishnappa_optimizing_2015}, cloud computing infrastructures~\citep{ling2013cdn}, software distribution systems~\citep{nygren2010akamai}, and emerging edge computing environments supporting latency-sensitive applications such as online gaming, augmented reality, and Internet-of-Things services~\citep{aral2019addressing,george2021evolution}.

From an optimization perspective, the efficient design and operation of CDNs raises several challenging problems involving the placement of edge servers, the allocation of user demand, the routing of requests across the network, the management of content replication, and the provisioning of service capacity. These decisions strongly affect both system performance and operational costs. A substantial body of research has examined these questions from both optimization and performance viewpoints \citep{verma2003cdn,buyya_content_2008,pallis2006insight,nygren2010akamai}. Early works addressed pricing and economic aspects of CDN deployment \citep{hosanagar_service_2008}, while subsequent contributions formulated server placement and mirror selection problems using classical facility location and $k$-center models \citep{hillmann_modeling_2016}. 

Further developments have considered the joint optimization of caching and routing decisions~\citep{poularakis_video_2014,guo_joint_2019,ghabashneh2020exploring}, as well as network-level optimization problems such as transit routing and traffic engineering~\citep{liu2012optimizing,ahmed_optimizing_2018}. More recently, cloud-oriented CDN architectures have been studied, where resource allocation and replica placement are integrated within virtualized environments~\citep{papagianni_cloud-oriented_2013,ling2013cdn}. Comprehensive surveys of these approaches can be found in \cite{salahuddin_survey_2018,pathan_taxonomy_2007}.

Despite this extensive literature, most existing optimization models and methods rely on simplified representations of service performance, typically approximating response times through deterministic proxies based on geographic distance or clustering strategies~\citep{verma2003cdn,pallis2006insight}. However, these approximations neglect congestion effects arising from the stochastic dynamics of request processing at edge servers. In practice, user-perceived latency depends not only on propagation delays, but also on queueing effects induced by heterogeneous service requirements and fluctuating traffic conditions, as highlighted in optimization-based overlay CDN models that explicitly account for dynamic traffic and resource allocation under real-time constraints \citep{caviglione2011overlay}. Incorporating these effects naturally leads to nonlinear expressions derived from queueing theory involving ratios of affine mappings, reciprocal terms, and interactions between multiple flow components, as observed in standard queueing models \citep{gross2008fundamentals,kleinrock1975queueing}.

These queueing-induced expressions define convex performance measures over the stability region of the system, but do not admit direct formulations compatible with standard mixed-integer optimization methodologies. Rather than approximating or linearizing such nonlinearities, in this paper we show that they possess an intrinsic structure particularly amenable to exact conic lifting. More precisely, by identifying a family of primitive functions capturing the essential building blocks of the queueing-performance expressions, we derive structured extended formulations based on second-order and rotated quadratic cones. This reveals that conic reformulations are not merely auxiliary computational tools, but rather the natural representational framework for this class of congestion-aware stochastic optimization problems.

Within this framework, we study the design of CDN architectures that explicitly account for congestion effects at edge servers. We consider a network composed of geographically distributed demand points, a set of origin servers, and a collection of edge servers whose positions and service capacities must be determined. Each demand point generates requests according to a stochastic process, and each request may either be served locally at the edge layer, cache hit, or require interaction with the origin layer, cache miss, leading to heterogeneous service requirements and congestion dynamics. The resulting performance measures combine deterministic propagation delays with queueing-based stochastic congestion terms.

The corresponding optimization problem integrates continuous location decisions, binary assignment variables, and nonlinear queueing-performance expressions within a unified mixed-integer conic framework. The nonlinear terms arising from the queueing systems are embedded into the optimization problem through exact conic liftings that preserve the stochastic structure of the original models while remaining computationally tractable. Beyond CDN design, the proposed methodology applies more broadly to service-system design, telecommunications networks, transportation systems, and cloud-computing infrastructures where congestion effects must be integrated with discrete design decisions.

We summarize below the main theoretical contributions by means of the relation of results that the reader will find throughout this paper.

\begin{itemize}

\item[--] 
Theorems \ref{lemma:dsr} and \ref{lemma:isr} characterize the queueing-theoretic dynamics of a CDN under disaggregated and integrated service regimes, respectively, deriving explicit stochastic representations for the corresponding congestion systems.

\item[--] Theorem \ref{thm:cdn_service_regimes} establishes tractable explicit expressions for the expectation of the stochastic total response time associated with requests generated at demand points within the CDN architecture.

\item[--] 
Theorems \ref{th:expectation_dsr} and \ref{th:expectation_isr} derive exact conic liftings for the expected sojourn times arising under the disaggregated and integrated queueing regimes, respectively, proving the conic representability of the associated congestion expressions in all considered scenarios.

\item[--] Theorem \ref{th:general_cdn} develops an exact mixed-integer conic formulation for the general congestion-aware CDN design problem by systematically combining the primitive conic representations derived throughout the paper.

\item[--] Theorem \ref{th:complexity} establishes the NP-hardness of the resulting congestion-aware mixed-integer conic optimization framework.

\item[--] Theorem \ref{th:single_edge_continuous} proves the exactness of the continuous relaxation in the congestion-aware CDN design for a single edge server.
\end{itemize}

From a practical perspective, this paper also provides several contributions related to the modeling, optimization, and evaluation of modern congestion-aware CDN infrastructures.

\begin{itemize}

\item[--] We develop a unified optimization framework for the design of content delivery networks under different queueing dynamics, including uncongested ({\rm UNC}), disaggregated ({\rm DSR}), and integrated ({\rm ISR}) service regimes.
\item[--] We propose tractable mixed-integer conic optimization formulations that can be solved using off-the-shelf conic solvers, such as \mbox{\rm MOSEK}, thereby enabling the exact optimization of congestion-aware CDN architectures within a modern conic optimization framework.
\item[--] For single-server problems, we derive a continuous conic optimization formulation that remains valid in the presence of multiple origin servers, showing that once the discrete edge-location structure is removed, the resulting congestion-aware capacity and origin-assignment problem can be represented exactly within a tractable conic framework.
\item[--] We provide a flexible optimization methodology for analyzing the interaction between congestion effects, infrastructure deployment, service capacities, routing decisions, and robustness-oriented performance criteria in large-scale CDN systems.
\item[--] We conduct an extensive computational study based on benchmark instances generated from a real Internet topology dataset~\citep{ITDK}, assessing the scalability, practical behavior, and managerial implications of the proposed congestion-aware mixed-integer conic formulations under realistic Internet-topology conditions.
\end{itemize}

The remainder of the paper is organized as follows.
Section~\ref{sec:MBC} introduces the conic lifting framework underlying the proposed methodology.
Section~\ref{sec:prelims} presents the CDN design problem together with the associated queueing-based performance model.
Sections~\ref{sec:singleServer} and~\ref{sec:conicOPT} are devoted to the analysis of the single-server and multi-server settings, respectively, and to the derivation of the corresponding optimization formulations.
Section~\ref{sec:computational} reports computational results on a case study, and Section~\ref{sec:conclusion} concludes the paper.


\section{Conic Liftings and Representability}\label{sec:MBC}

The modeling framework developed in this paper relies on the systematic use of conic liftings for nonlinear expressions involving both continuous and binary variables. These expressions arise naturally in the queueing-performance metrics associated with the considered congestion-aware service systems and involve ratios, products, reciprocal terms, and nonlinear interactions between traffic and service-capacity variables that cannot be directly handled within standard mixed-integer linear or convex formulations.

Rather than treating these nonlinearities individually, we adopt a representability-oriented perspective based on the identification of primitive conic-representable functions and their composition rules. The objective is to derive exact structured liftings for the queueing-induced expressions appearing throughout the optimization framework while preserving low representational complexity.

To address these challenges, this section first recalls the notion of conic lifting and its role in reformulating nonlinear convex sets and functions through structured constraints. We then introduce the family of canonical cones used throughout the paper, including $p$-order and rotated quadratic cones, which are sufficient to model the nonlinear expressions arising in the considered queueing systems. A key ingredient in this construction is the treatment of products involving binary variables, for which McCormick envelopes and lifting arguments will be systematically employed. Finally, we show how these ingredients can be integrated into a unified mixed-integer conic framework coupling discrete design decisions with nonlinear stochastic performance measures.

From a methodological standpoint, conic optimization provides a unifying framework for representing nonlinear convex sets and functions through structured constraints. Many functions arising in applications, including quadratic-over-linear terms, norms, geometric means, and exponential expressions, admit exact representations through second-order, positive semidefinite, power, or exponential cones. This allows one to transform nonlinear convex optimization problems into structured conic programs solvable through interior-point methods (IPM) implemented in modern conic optimization solvers \citep{BenTalNemirovski2001,BoydVandenberghe2004}.

A key feature of conic reformulations is their compatibility with discrete optimization. When combined with binary decision variables, conic constraints yield mixed-integer conic programs that have become increasingly tractable due to major advances in modern optimization software. This is particularly relevant in settings where discrete design decisions must be integrated with nonlinear congestion metrics, as in the problems considered in this paper.

Formally, a convex set $\mathcal{X} \subseteq \mathbb{R}^n$ is said to be \emph{conic representable} if there exists a proper cone $\mathcal{K} \subseteq \mathbb{R}^d$, and an affine mapping
$$
(x,w)\mapsto
A
\begin{pmatrix}
x\\
w
\end{pmatrix}
-b
:
\mathbb{R}^{n+m}\to\mathbb{R}^d
$$
such that
$$
x \in \mathcal{X}
\iff
\exists w \in \mathbb{R}^m :
A
\begin{pmatrix}
x\\
w
\end{pmatrix}
-b
\in \mathcal{K}.
$$

In this case, the set $\mathcal{X}$ can be viewed as the projection onto the original variable space of a higher-dimensional feasible region defined through conic constraints. Any such lifted feasible region is called a \emph{conic lifting} of $\mathcal{X}$. Similarly, a convex function $f$ is said to be conic representable whenever its epigraph admits such a representation.

The importance of conic representability in the present setting stems from the fact that the queueing-performance expressions arising in congestion-aware CDN design generate convex epigraphs involving coupled reciprocal and rational structures. Although these nonlinearities are not directly compatible with standard mixed-integer optimization techniques, we show throughout the paper that they admit exact conic liftings through suitable auxiliary-variable constructions and composition arguments.

The conic liftings used throughout the paper rely on a small family of fundamental cones. In particular, we consider the nonnegative orthant $\mathbb{R}^m_+$ and the $p$-order cone
$$
\mathcal{L}^m_p :=
\{
(x,t)\in\mathbb{R}^m\times\mathbb{R}
:
\|x\|_p \le t
\},
$$
where $\|\cdot\|_p$ denotes the $\ell_p$-norm. When $p=2$, this cone corresponds to the second-order, or Lorentz, cone. In addition, we use the rotated quadratic cone
$$
\mathcal{S}^2_+ :=
\{
(x,y,z)\in\mathbb{R}^3
:
xy\ge z^2,\ x,y\ge0
\},
$$
which is equivalent to the cone of positive semidefinite $2\times2$ matrices.

These cones belong to the standard family of canonical cones and are sufficient to represent the nonlinear expressions considered throughout this work.

Moreover, whenever $p$ is rational, all these cones admit representations involving only a minimum number of second-order cone constraints, yielding structured and complexity-optimal second-order cone formulations~\citep{blanco2024minimal,blanco2025complexity}. This property will play a central role in the derivation of the exact mixed-integer conic formulations developed in the subsequent sections.

\section{Content Delivery Network Design Problem}\label{sec:prelims}

We are given a finite set of demand points $\I$. Each demand point $i\in\I$
is located at position $a_i\in\R^m$ and generates requests according to an independent
Poisson process with intensity $\xi_i>0$.
A fixed natural number $n\ge 1$ of edge servers is to be deployed, with continuous
locations $x_j\in\R^m$, $j\in\J = \{1, \ldots, n\}$.
A finite set of origin servers indexed by $\K$ is given, where each
origin server $k\in\K$ is located at $b_k\in\R^m$.
Origins are assumed to be fixed and sufficiently provisioned, hence they are not
decision variables in the design problem considered here.

The CDN design problem studied in this paper combines continuous location decisions,
discrete assignment decisions, and service-capacity decisions. Its distinctive feature is
that the quality of service is not approximated only through deterministic distances, but
is instead derived from a queueing representation of congestion at the edge layer. This
leads to nonlinear response-time expressions whose structure will later be exploited to
derive conic liftings.

Each edge server operates as a finite-capacity service system.
The service requirement of a request depends on cache availability.
If the requested object is stored at the edge, the request is a \emph{cache hit} and is
completed after local processing.
If the object is not available, the request is a \emph{cache miss}, and the edge must
retrieve the object from an assigned origin server before completing the
response.
Cache misses, therefore, entail additional network interaction and processing
overhead and lead to larger effective service requirements.

To represent propagation and access effects, we are given a metric $\D$ in $\R^m$.
If demand point $i\in\I$ is assigned to edge server $j\in\J$, the deterministic
access delay is modeled as $\kappa_1 \D(a_i,x_j)$, where $\kappa_1>0$ is a propagation delay per unit length.
In addition, when a cache miss occurs, edge server $j$ retrieves the content from
an assigned origin server.
The deterministic upstream retrieval delay on a miss is modeled as
$\kappa_2 \D(x_j,b_k)$, where $\kappa_2>0$ is the corresponding delay per unit length.

To capture processing and congestion effects at the edge servers, we adopt a general modeling framework in which each edge server $j\in\J$ is associated with a random sojourn time $T_j$ representing the total time spent by a request in the edge server, including both queueing and service components. The specific structure of $T_j$ depends on the adopted \emph{service regime}, allowing different levels of modeling fidelity within a unified framework.

To represent processing heterogeneity between cache hits and misses, we define for each edge server $j\in\J$, exponential service rates
$\mu^\vartheta_j>0$ for $\vartheta \in \Theta:=\{H, M\}$, corresponding to cache-hit ($H$) and cache-miss ($M$) requests, respectively.

For each demand point $i\in\I$, each request is independently classified as a cache hit or a cache miss with probability $p_i^\vartheta\in[0,1]$, for $\vartheta \in \Theta$, such that $\sum_{\vartheta \in \Theta} p^\vartheta_i = 1$.

Although the standard CDN setting distinguishes only between cache hits and cache misses,
we state the queueing model using the more general notation $\vartheta\in\Theta$. This
allows the same framework to accommodate multiple request classes, such as different
types of cache misses associated with heterogeneous content categories, retrieval
procedures, or origin layers. In this generalized interpretation, one element of $\Theta$,
denoted by $H$, represents cache hits, while the remaining elements correspond to
requests that require an upstream retrieval operation. The two-class case
$\Theta=\{H,M\}$ is recovered as the canonical CDN model considered throughout most of
the paper.

The CDN design problem combines two tightly interconnected classes of decisions:
location-allocation decisions (Figure~\ref{fig:loc}) and service dynamics.
\begin{figure}[h]
\centering
\begin{tikzpicture}[
    >=Latex,
    scale=0.85,
    region/.style    = {draw,rounded corners=3pt,fill=gray!5},
    thinarr/.style   = {->,thin},
    dashedarr/.style = {<-,line width=0.8pt,dashed},
    every node/.style = {align=center}
]

\def\EdgeW{0.6cm}
\def\EdgeH{0.8cm}
\def\OrigW{1cm}
\def\OrigH{1.1cm}

\tikzset{
    demand/.style={
        inner sep=0pt,
        outer sep=0pt,
        text=black,
        font=\fontsize{14}{14}\selectfont
    },
    edge server/.style={
        rectangle,
        minimum width=\EdgeW,
        minimum height=\EdgeH,
        inner sep=0pt,
        outer sep=0pt,
        draw=none,
        path picture={
            \node at (path picture bounding box.center)
            {\includegraphics[width=\EdgeW,height=\EdgeH]{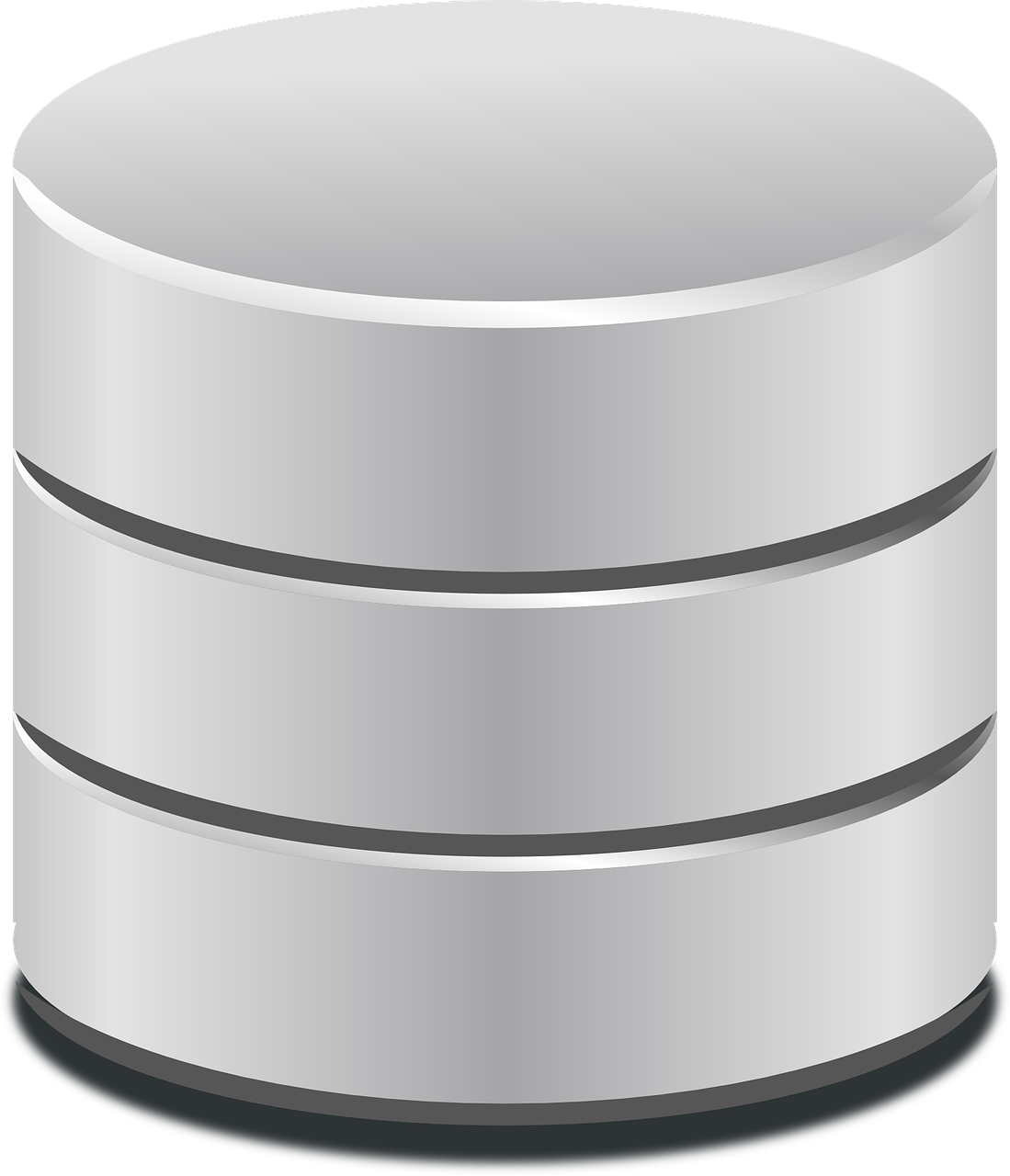}};
        }
    },
    origin server/.style={
        rectangle,
        minimum width=\OrigW,
        minimum height=\OrigH,
        inner sep=0pt,
        outer sep=0pt,
        draw=none,
        path picture={
            \node at (path picture bounding box.center)
            {\includegraphics[width=\OrigW,height=\OrigH]{server.png}};
            \fill[green!60,opacity=.33]
                (path picture bounding box.south west)
                rectangle
                (path picture bounding box.north east);
        }
    }
}

\draw[region] (-0.2,-6.8) rectangle (12.6,2.2);

\node[demand] (a1) at (1.0,1.4) {\small\faUser};
\node[demand] (a2) at (1.8,0.5) {\small\faUser};
\node[demand] (a3) at (2.2,-0.2) {\small\faUser};
\node[demand] (a4) at (1.2,-1.3) {\small\faUser};

\node[demand] (b1) at (4.2,1.2) {\small\faUser};
\node[demand] (b2) at (4.9,0.4) {\small\faUser};
\node[demand] (b3) at (5.2,-0.6) {\small\faUser};
\node[demand] (b4) at (4.0,-1.4) {\small\faUser};

\node[demand] (c1) at (7.1,1.5) {\small\faUser};
\node[demand] (c2) at (7.9,0.7) {\small\faUser};
\node[demand] (c3) at (8.4,-0.2) {\small\faUser};
\node[demand] (c4) at (7.3,-1.2) {\small\faUser};

\node[demand] (d1) at (10.2,1.1) {\small\faUser};
\node[demand] (d2) at (10.9,0.2) {\small\faUser};
\node[demand] (d3) at (11.2,-0.7) {\small\faUser};
\node[demand] (d4) at (9.8,-1.4) {\small\faUser};

\node[edge server] (f1) at (3.2,0.2) {};
\node[edge server] (f2) at (6.4,-0.6) {};
\node[edge server] (f3) at (9.3,0.6) {};

\node[fill=white,fill opacity=.55,text opacity=1,rounded corners=2pt,inner sep=1pt] at (f1.center) {$x_1$};
\node[fill=white,fill opacity=.55,text opacity=1,rounded corners=2pt,inner sep=1pt] at (f2.center) {$x_2$};
\node[fill=white,fill opacity=.55,text opacity=1,rounded corners=2pt,inner sep=1pt] at (f3.center) {$x_3$};

\node[demand] (a1b) at (1.0,-4.4) {\small\faUser};
\node[demand] (a2b) at (1.8,-4.5) {\small\faUser};
\node[demand] (a3b) at (2.2,-5.0) {\small\faUser};
\node[demand] (a4b) at (1.75,-5.8) {\small\faUser};

\node[demand] (b1b) at (4.2,-4.2) {\small\faUser};
\node[demand] (b2b) at (3.9,-5.4) {\small\faUser};
\node[demand] (b3b) at (5.2,-5.3) {\small\faUser};
\node[demand] (b4b) at (6.0,-6.0) {\small\faUser};

\node[demand] (c1b) at (7.1,-3.8) {\small\faUser};
\node[demand] (c2b) at (8.6,-5.7) {\small\faUser};
\node[demand] (c3b) at (7.8,-5.2) {\small\faUser};
\node[demand] (c4b) at (7.3,-6.2) {\small\faUser};

\node[demand] (d1b) at (10.2,-4.1) {\small\faUser};
\node[demand] (d2b) at (10.9,-5.2) {\small\faUser};
\node[demand] (d3b) at (11.2,-5.7) {\small\faUser};
\node[demand] (d4b) at (9.8,-6.4) {\small\faUser};

\node[edge server] (f1b) at (3.2,-4.2) {};
\node[edge server] (f2b) at (6.4,-4.6) {};
\node[edge server] (f3b) at (9.3,-4.6) {};

\node[fill=white,fill opacity=.55,text opacity=1,rounded corners=2pt,inner sep=1pt] at (f1b.center) {$x_4$};
\node[fill=white,fill opacity=.55,text opacity=1,rounded corners=2pt,inner sep=1pt] at (f2b.center) {$x_5$};
\node[fill=white,fill opacity=.55,text opacity=1,rounded corners=2pt,inner sep=1pt] at (f3b.center) {$x_6$};

\node[origin server] (o1) at (4,-2.5) {};
\node[origin server] (o2) at (8,-3) {};

\draw[thinarr] (a1.east) .. controls ($(a1.east)+(0.6,0)$) and ($(f1.north)+(-0.6,0.5)$) .. (f1.north);
\draw[thinarr] (a2.east) .. controls ($(a2.east)+(0.6,0)$) and ($(f1.west)+(-0.6,0.5)$) .. (f1.west);
\draw[thinarr] (a3.east) .. controls ($(a3.east)+(0.6,0)$) and ($(f1.west)+(-0.6,-0.5)$) .. (f1.west);
\draw[thinarr] (a4.east) .. controls ($(a4.east)+(0.6,0)$) and ($(f1.south)+(-0.6,-0.5)$) .. (f1.south);
\draw[thinarr] (b1.west) .. controls ($(b1.west)+(-0.2,0)$) and ($(f1.north)+(0.6,0.5)$) .. (f1.north);
\draw[thinarr] (b2.west) .. controls ($(b2.west)+(-0.2,0)$) and ($(f1.east)+(0.6,-0.5)$) .. (f1.east);

\draw[thinarr] (b3.east) .. controls ($(b3.east)+(0.6,0)$) and ($(f2.west)+(-0.6,0)$) .. (f2.west);
\draw[thinarr] (b4.east) .. controls ($(b4.east)+(0.6,0)$) and ($(f2.south)+(-0.6,-0.6)$) .. ($(f2.south)+(-0.1,0.0)$);
\draw[thinarr] (c4.west) .. controls ($(c4.west)+(-0.1,0)$) and ($(f2.south)+(0.6,-0.6)$) .. ($(f2.south)+(0.1,0.0)$);
\draw[thinarr] (c1.west) .. controls ($(c1.west)+(-0.6,0)$) and ($(f2.north)+(0,0.6)$) .. (f2.north);
\draw[thinarr] (c3.west) .. controls ($(c3.west)+(-0.6,0)$) and ($(f2.east)+(0.6,0.6)$) .. (f2.east);

\draw[thinarr] (c2.east) .. controls ($(c2.east)+(0.6,0)$) and ($(f3.west)+(-0.6,0)$) .. (f3.west);
\draw[thinarr] (d1.west) .. controls ($(d1.west)+(-0.6,0)$) and ($(f3.north)+(0,0.5)$) .. (f3.north);
\draw[thinarr] (d2.west) .. controls ($(d2.west)+(-0.6,0)$) and ($(f3.east)+(0.6,0)$) .. (f3.east);
\draw[thinarr] (d3.west) .. controls ($(d3.west)+(-0.6,0)$) and ($(f3.south)+(0.6,-0.5)$) .. ($(f3.south)+(0.1,0)$);
\draw[thinarr] (d4.north) .. controls ($(d4.north)+(-0.6,1)$) and ($(f3.south)+(0.6,-1)$) .. ($(f3.south)+(-0.1,0)$);

\draw[dashedarr] (o1.west)  .. controls ($(o1.west)+(-2,0)$)   and ($(f1.south)+(0.6,-1.5)$)  .. (f1.south);
\draw[dashedarr] (o1.north) .. controls ($(o1.north)+(0.1,0)$) and ($(f2.south)+(-0.5,-1.5)$) .. (f2.south);
\draw[dashedarr] (o2.north) .. controls ($(o2.north)+(0,2)$)   and ($(f3.south)+(-0.6,-1.5)$) .. ($(f3.south)+(-0.15,0)$);

\draw[dashedarr] (o1.south) .. controls ($(o1.south)+(-1,0)$)  and ($(f1b.north)+(0,1)$)      .. (f1b.north);
\draw[dashedarr] (o2.west)  .. controls ($(o2.west)+(-0.4,0)$) and ($(f2b.north)+(-0.3,1.5)$) .. (f2b.north);
\draw[dashedarr] (o2.east)  .. controls ($(o2.east)+(0.2,0)$)  and ($(f3b.north)+(0.2,1.5)$)  .. ($(f3b.north)+(-0.15,0)$);

\draw[thinarr] (a1b.east) .. controls ($(a1b.east)+(0.6,0)$) and ($(f1b.north)+(-0.6,0.5)$) .. (f1b.north);
\draw[thinarr] (a2b.east) .. controls ($(a2b.east)+(0.6,0)$) and ($(f1b.west)+(-0.6,0.5)$) .. (f1b.west);
\draw[thinarr] (a3b.east) .. controls ($(a3b.east)+(0.6,0)$) and ($(f1b.west)+(-0.6,-0.5)$) .. (f1b.west);
\draw[thinarr] (a4b.east) .. controls ($(a4b.east)+(0.6,0)$) and ($(f1b.south)+(-0.6,-0.5)$) .. (f1b.south);
\draw[thinarr] (b1b.west) .. controls ($(b1b.west)+(-0.2,0)$) and ($(f1b.north)+(0.6,0.5)$) .. (f1b.north);
\draw[thinarr] (b2b.west) .. controls ($(b2b.west)+(-0.2,0)$) and ($(f1b.east)+(0.6,-0.5)$) .. (f1b.east);

\draw[thinarr] (b3b.east) .. controls ($(b3b.east)+(0.6,0)$) and ($(f2b.west)+(-0.6,0)$) .. (f2b.west);
\draw[thinarr] (b4b.east) .. controls ($(b4b.east)+(0.6,0)$) and ($(f2b.south)+(-0.6,-0.6)$) .. ($(f2b.south)+(-0.1,0.0)$);
\draw[thinarr] (c4b.west) .. controls ($(c4b.west)+(-0.1,0)$) and ($(f2b.south)+(0.6,-0.6)$) .. ($(f2b.south)+(0.1,0.0)$);
\draw[thinarr] (c1b.west) .. controls ($(c1b.west)+(0.1,0)$) and ($(f2b.north)+(0,0.6)$) .. ($(f2b.north)+(0.1,0.0)$);
\draw[thinarr] (c3b.west) .. controls ($(c3b.west)+(-0.6,0)$) and ($(f2b.east)+(0.6,0.6)$) .. (f2b.east);

\draw[thinarr] (c2b.east) .. controls ($(c2b.east)+(0.6,0)$) and ($(f3b.west)+(-0.6,0)$) .. (f3b.west);
\draw[thinarr] (d1b.west) .. controls ($(d1b.west)+(-0.6,0)$) and ($(f3b.north)+(0,0.5)$) .. (f3b.north);
\draw[thinarr] (d2b.west) .. controls ($(d2b.west)+(-0.6,0)$) and ($(f3b.east)+(0.6,0)$) .. (f3b.east);
\draw[thinarr] (d3b.west) .. controls ($(d3b.west)+(-0.6,0)$) and ($(f3b.south)+(0.6,-0.5)$) .. ($(f3b.south)+(0.1,0)$);
\draw[thinarr] (d4b.north) .. controls ($(d4b.north)+(-0.6,1)$) and ($(f3b.south)+(0.6,-1)$) .. ($(f3b.south)+(-0.1,0)$);

\end{tikzpicture}
    \caption{CDN overview.\label{fig:loc}}
\end{figure}

In order to derive the expressions of the terms involved in the computation of the expected response times, we recall some useful and well-known results on queueing theory~\citep[see, e.g.,][]{gross2008fundamentals,kleinrock1975queueing,wolff1989stochastic}.

\begin{proposition}\label{prop:ET_general}
Consider an overall Poisson arrival process with total rate $\lambda>0$ that is partitioned into different streams with rates $\lambda^\vartheta \ge 0$, for $\vartheta \in \Theta$, i.e., $\lambda=\sum_{\vartheta\in \Theta} \lambda^\vartheta$.

\begin{enumerate}[label=\roman*), ref={\roman*}]
\item \label{prop:dsr}
Assume that the streams are processed by independent ${\rm M/M/1}$ queues, each with service rate $\mu^\vartheta>0$, and that $\lambda^\vartheta<\mu^\vartheta$ for all $\vartheta \in \Theta$. Then, the expected steady-state sojourn time of an arbitrary request is
$$
\mathbb{E}[T]
= \sum_{\vartheta \in \Theta} \frac{\lambda^\vartheta}{\lambda (\mu^\vartheta - \lambda^\vartheta)}.
$$

\item \label{prop:isr}
Assume that all requests are processed by a single-server queue with arrival rate $\lambda$ and service time $S$, where $S$ follows a mixture of exponential distributions $(\mathrm{H}_{|\Theta|})$ with probabilities $\lambda^\vartheta/\lambda$ and rates $\mu^\vartheta$, for $\vartheta \in \Theta$. If $\lambda \mathbb{E}[S] < 1$, then the expected steady-state sojourn time is
\begin{equation}\label{eq:pollaczek-khinchine}
    \mathbb{E}[T]
=
\mathbb{E}[S]
+
\frac{\lambda \mathbb{E}[S^2]}{2(1-\lambda \mathbb{E}[S])},
\end{equation}
where
$$
\mathbb{E}[S]
= \sum_{\vartheta \in\Theta} \frac{\lambda^\vartheta}{\lambda \mu^\vartheta}
\quad \text{and} \quad
\mathbb{E}[S^2]
= \frac{2}{\lambda} \sum_{\vartheta \in \Theta} \frac{\lambda^\vartheta}{(\mu^\vartheta)^2}.
$$
\end{enumerate}
\end{proposition}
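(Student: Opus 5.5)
Both parts follow from two classical facts, which I will take as given: Poisson thinning, and the Pollaczek--Khinchine mean-value formula for the M/G/1 queue. Combined with elementary conditioning on the class of an arriving request, they yield both displays.

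For part~\ref{prop:dsr}, the starting point is Poisson thinning. Since each request of the overall Poisson stream of rate $\lambda$ belongs to class $\vartheta$ independently with probability $\lambda^\vartheta/\lambda$, the class-$\vartheta$ substreams are independent Poisson processes with rates $\lambda^\vartheta$. Each substream feeds its own M/M/1 queue with service rate $\mu^\vartheta$. Under $\lambda^\vartheta<\mu^\vartheta$ the queue is stable, and its stationary number in system is geometric with parameter $\rho^\vartheta=\lambda^\vartheta/\mu^\vartheta$, so $\mathbb{E}[N^\vartheta]=\rho^\vartheta/(1-\rho^\vartheta)$. Little's law then gives $\mathbb{E}[T^\vartheta]=1/(\mu^\vartheta-\lambda^\vartheta)$. (Equivalently, PASTA and memorylessness show that the sojourn time is exponential with this rate.) Finally, an arbitrary request belongs to class $\vartheta$ with probability $\lambda^\vartheta/\lambda$, so the law of total expectation gives $\mathbb{E}[T]=\sum_\vartheta \frac{\lambda^\vartheta}{\lambda}\frac{1}{\mu^\vartheta-\lambda^\vartheta}$, which is the claimed expression. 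Classes with $\lambda^\vartheta=0$ contribute zero and are harmless.

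For part~\ref{prop:isr}, the system is an M/G/1 queue with hyperexponential service time $S$, and $\lambda\mathbb{E}[S]<1$ is the usual stability condition. The Pollaczek--Khinchine mean-value formula gives the mean waiting time in queue as $\mathbb{E}[W]=\lambda\mathbb{E}[S^2]/(2(1-\lambda\mathbb{E}[S]))$, and $\mathbb{E}[T]=\mathbb{E}[S]+\mathbb{E}[W]$ yields \eqref{eq:pollaczek-khinchine}. If a self-contained justification is wanted, I would sketch the standard derivation: by PASTA an arrival sees the stationary queue, so $\mathbb{E}[W]=\mathbb{E}[L_q]\mathbb{E}[S]+\rho\,\mathbb{E}[S^2]/(2\mathbb{E}[S])$, where the second term is the mean residual service. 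Combining this with Little's law $\mathbb{E}[L_q]=\lambda\mathbb{E}[W]$ and solving the resulting linear equation gives the formula. The moments then follow by conditioning on the class: an exponential variable with rate $\mu$ has $\mathbb{E}[\mathrm{Exp}(\mu)]=1/\mu$ and $\mathbb{E}[\mathrm{Exp}(\mu)^2]=2/\mu^2$, and weighting by $\lambda^\vartheta/\lambda$ gives both stated moments.

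The only step with real content is the residual-service (or PASTA) argument behind Pollaczek--Khinchine. Since the paper explicitly recalls these results as well known and cites standard texts, I would cite the formula rather than rederive it. The remaining steps are straightforward substitutions.
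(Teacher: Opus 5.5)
Your proposal is correct and follows essentially the same route as the paper. For (i) it combines the $\mathrm{M/M/1}$ mean sojourn time $1/(\mu^\vartheta-\lambda^\vartheta)$ with the law of total expectation over classes weighted by $\lambda^\vartheta/\lambda$, and for (ii) it applies the Pollaczek--Khinchine formula with the hyperexponential moments obtained by conditioning on the class. Your extra justifications (Poisson thinning, the Little's-law derivation, and the residual-service sketch of Pollaczek--Khinchine) spell out standard facts that the paper invokes directly.
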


\begin{proof}
We analyze each service structure separately.
\begin{enumerate}[label=\roman*)]
    \item 
Each stream $\vartheta \in \Theta$ is processed by an independent ${\rm M/M/1}$ queue with arrival rate $\lambda^\vartheta$ and service rate $\mu^\vartheta$. It is well known that, under the stability condition $\lambda^\vartheta < \mu^\vartheta$, the steady-state expected sojourn time in such a queue, denoted by $T^\vartheta$, is given by
$$
\mathbb{E}[T^\vartheta] = \frac{1}{\mu^\vartheta - \lambda^\vartheta}.
$$
An arbitrary request belongs to stream $\vartheta$ with probability $\lambda^\vartheta/\lambda$. Therefore, the overall expected sojourn time is obtained by conditioning on the stream type and applying the law of total expectation:
$$
\mathbb{E}[T]
= \sum_{\vartheta \in \Theta} \frac{\lambda^\vartheta}{\lambda} \,\mathbb{E}[T^\vartheta]
= \sum_{\vartheta \in \Theta} \frac{\lambda^\vartheta}{\lambda (\mu^\vartheta - \lambda^\vartheta)}.
$$

\item All requests are processed by a single-server queue with arrival rate $\lambda$ and service time $S$. The service time follows a mixture of exponential distributions. With probability $\lambda^\vartheta/\lambda$, the service time is exponentially distributed with rate $\mu^\vartheta$. Therefore, by conditioning on the type $\vartheta$, the first moment is
$$
\mathbb{E}[S]
= \sum_{\vartheta \in \Theta} \frac{\lambda^\vartheta}{\lambda} \cdot \frac{1}{\mu^\vartheta}
= \sum_{\vartheta \in\Theta} \frac{\lambda^\vartheta}{\lambda \mu^\vartheta}.
$$
Analogously, since the second moment of an exponential distribution with rate $\mu^\vartheta$ is $2/(\mu^\vartheta)^2$, by the linearity of expectation for mixture distributions, it follows that
$$
\mathbb{E}[S^2]
= \sum_{\vartheta \in \Theta} \frac{\lambda^\vartheta}{\lambda} \cdot \frac{2}{(\mu^\vartheta)^2}
= \frac{2}{\lambda} \sum_{\vartheta \in \Theta} \frac{\lambda^\vartheta}{(\mu^\vartheta)^2}.
$$
Under the stability condition $\lambda \mathbb{E}[S] < 1$, the classical Pollaczek--Khinchine formula for general ${\rm M/G/1}$ queues applied to $\rm G=\rm H_{|\Theta|}$ completes the proof.
\end{enumerate}
\end{proof}

\subsection{Service Dynamics}\label{sec:service_dynamics}

Within this framework, we consider two alternative representations of the service dynamics at each edge server, based on the way the service regimes are aggregated.

The first one, referred to as the disaggregated service regime (DSR, for short), models different request classes as independent queueing subsystems. The second one, referred to as the integrated service regime (ISR, for short), models all request classes through a common shared queue with a mixed service-time distribution. These two regimes provide complementary levels of aggregation and lead to different congestion expressions.

We summarize the assumptions that govern the arrival and service processes in the queueing-based CDN.

\begin{enumerate}[label=(A\arabic*), ref={\rm A\arabic*}]
\item \label{a1}
Allocation decisions are deterministic and independent of the queue state. Each demand point $i$ is assigned to exactly one edge server $j\in\J$, and each edge server $j$ is assigned to exactly one origin server $k\in\K$.

\item \label{a2}
For each demand point $i\in\I$, requests are generated according to independent Poisson processes with rate $\xi_i>0$.

\item \label{a3}
For each demand point $i\in\I$, each request is independently classified into cache type $\vartheta \in \Theta$ with probability $p_i^\vartheta \in [0,1]$, where $\sum_{\vartheta \in \Theta} p_i^\vartheta = 1$. In the classical setting, $\Theta=\{H,M\}$ corresponds to cache hits and cache misses.

\item \label{a4}
For each edge server $j\in\J$, the service times associated with requests of type $\vartheta \in \Theta$ are independent and exponentially distributed with rates $\mu^\vartheta_j>0$. In the classical setting, $\Theta=\{H,M\}$ corresponds to cache hits and cache misses.
\end{enumerate}

\begin{lemma}\label{lemma:arrivals}
   Given an allocation of demand points to edge servers, denote by $\I_j\subseteq \I$ the set of demand points assigned to server $j \in \J$. Under assumptions \ref{a1}--\ref{a3}, the following claims hold.

\begin{enumerate}[label=\roman*), ref=\roman*]

\item The arrival process at each edge server $j$ is Poisson with rate
$$
\Lambda_j = \sum_{i\in\I_j}\xi_i.
$$

\item The cache arrival processes at each edge server $j\in\J$ are independent Poisson processes with rates
$$
\Lambda^\vartheta_j=\sum_{i\in\I_j}\xi_i p_i^\vartheta,
\qquad \forall \vartheta\in\Theta.
$$
\end{enumerate}
\end{lemma}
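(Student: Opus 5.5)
The plan is to rely on the two standard closure properties of Poisson processes: superposition and independent thinning (splitting). By assumption \ref{a1}, allocation is deterministic and does not depend on the queue state. Hence, for a fixed allocation, the set $\I_j$ is a fixed (nonrandom) subset of $\I$. Moreover, the arrival stream at edge server $j$ is exactly the union of the request streams of the demand points in $\I_j$. The sets $\I_j$, $j\in\J$, form a partition of $\I$, since each demand point is assigned to exactly one edge server.

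For claim (i), I will invoke \ref{a2}: the processes generated at the points $i\in\I_j$ are independent Poisson processes with rates $\xi_i$. The superposition of finitely many independent Poisson processes is Poisson with rate equal to the sum of the rates. The cleanest route is through independent increments and the distribution of counts. Counts over disjoint intervals remain independent because each component process has independent increments and the components are mutually independent. The count in an interval of length $t$ is a sum of independent Poisson variables, which is Poisson with mean $t\sum_{i\in\I_j}\xi_i$. This gives $\Lambda_j=\sum_{i\in\I_j}\xi_i$. If $\I_j=\emptyset$, the process is trivially the null process with rate $0$.

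For claim (ii), I will first thin each individual stream. By \ref{a3}, each request of demand point $i$ is labelled $\vartheta$ independently with probability $p_i^\vartheta$. The multinomial thinning (coloring) theorem then states that the labelled substreams $\{N_i^\vartheta\}_{\vartheta\in\Theta}$ are mutually independent Poisson processes with rates $\xi_i p_i^\vartheta$. To justify this, I will condition on the total count $N_i(t)=n$, which makes the class counts multinomial. Unconditioning then yields a product of Poisson probability mass functions, and the same argument applies on disjoint intervals to obtain independent increments. Across different $i$, all the families $\{N_i^\vartheta\}_\vartheta$ are independent, because the base processes and the classification variables are independent.

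The step I expect to require the most care is assembling the independence of the aggregated class processes. By superposition as in (i), $N_j^\vartheta=\sum_{i\in\I_j}N_i^\vartheta$ is Poisson with rate $\Lambda_j^\vartheta=\sum_{i\in\I_j}\xi_ip_i^\vartheta$. Independence across $\vartheta$, and across $j$, follows because each $N_j^\vartheta$ is a measurable function of a disjoint subfamily of the mutually independent collection $\{N_i^\vartheta\}_{i\in\I,\vartheta\in\Theta}$. The disjointness holds because the $\I_j$ partition $\I$ and the classes are distinct. Finally, as a consistency check, $\sum_\vartheta\Lambda_j^\vartheta=\Lambda_j$ since $\sum_\vartheta p_i^\vartheta=1$.
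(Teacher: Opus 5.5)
Your proposal is correct and follows the same route as the paper. For (i) you superpose the independent Poisson streams of the points in $\I_j$, and for (ii) you thin each stream by class and then superpose. You make explicit what the paper leaves implicit: why superposition and multinomial thinning hold, and the grouping argument (disjoint subfamilies of a mutually independent collection) that gives independence of the aggregated class processes across $\vartheta$.
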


\begin{proof}
    Assumptions \ref{a1} and \ref{a2} ensure that every demand point $i\in\I_j$, and only these points, generates requests to edge server $j$ according to independent Poisson processes with rates $\xi_i$. By superposition of independent Poisson processes, the aggregate arrival rate to edge server $j$ is $\Lambda_j = \sum_{i\in\I_j}\xi_i$, proving i).

    By assumption \ref{a3}, every demand point $i\in\I_j$ generates requests of type $\vartheta$ according to a thinned Poisson process with rate $\xi_i p^\vartheta_i$. The thinning property of Poisson processes and the superposition of independent Poisson processes imply ii).
\end{proof}
In what follows, we describe the structural and operational characteristics of the two queueing regimes considered in this paper.

\noindent \textbf{Disaggregated service regime (DSR).} Under DSR, cache hits and cache misses are processed through independent service subsystems.

\begin{theorem}\label{lemma:dsr}
Under assumptions \ref{a1}--\ref{a4} and a disaggregated service regime by cache status in $\Theta$. Given any edge server $j\in\J$. If $
\Lambda^\vartheta_j < \mu^\vartheta_j$
for every $\vartheta \in \Theta$, the service system at $j$ decomposes into $|\Theta|$ independent $\mathrm{M}/\mathrm{M}/1$ queues and the random sojourn time $T_j$ is hyperexponential with expectation
\begin{equation}\label{eq:expected_dsr}
    \mathbb{E}[T_j] = \sum_{\vartheta \in \Theta}
\frac{\Lambda_j^\vartheta}{\Lambda_j(\mu_j^\vartheta-\Lambda_j^\vartheta)}.
\end{equation}
\end{theorem}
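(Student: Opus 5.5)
The plan is to reduce the statement to Lemma~\ref{lemma:arrivals} and Proposition~\ref{prop:ET_general}\,\ref{prop:dsr}, so that the argument is essentially a matter of verifying hypotheses.

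\emph{Step 1: arrival processes.} Fix the allocation, which by \ref{a1} is deterministic and independent of the queue state, and let $\I_j$ be the set of demand points assigned to $j$. By Lemma~\ref{lemma:arrivals}, the class-$\vartheta$ arrival streams at $j$ are independent Poisson processes with rates $\Lambda_j^\vartheta$. Their superposition has rate $\Lambda_j=\sum_{\vartheta}\Lambda_j^\vartheta$; this identity follows from $\sum_\vartheta p_i^\vartheta=1$.

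\emph{Step 2: decomposition into independent queues.} Under DSR, each class $\vartheta$ is routed to its own dedicated server. By \ref{a4}, that server has i.i.d.\ exponential service times with rate $\mu_j^\vartheta$, independent of all arrivals. Hence subsystem $\vartheta$ is an $\mathrm{M}/\mathrm{M}/1$ queue with parameters $(\Lambda_j^\vartheta,\mu_j^\vartheta)$. The subsystems are mutually independent because:
\begin{itemize}
\item they are driven by independent arrival streams (by Poisson thinning, Step 1), and
\item they have independent service sequences.
\end{itemize}
Since no resource is shared, the state of one subsystem never influences another. This independence claim is the main point that needs care. I would argue it at the level of sample paths: each queue-length process is a measurable function of its own arrival and service sequences only, and these sequences are independent across classes.

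\emph{Step 3: sojourn-time distribution.} Under the stability condition $\Lambda_j^\vartheta<\mu_j^\vartheta$, the classical $\mathrm{M}/\mathrm{M}/1$ steady-state sojourn time is exponential with rate $\mu_j^\vartheta-\Lambda_j^\vartheta$. A tagged arbitrary request at $j$ belongs to class $\vartheta$ with probability $\Lambda_j^\vartheta/\Lambda_j$. This follows from the competing Poisson streams or, equivalently, from PASTA together with thinning. Conditioning on the class, $T_j$ is therefore a finite mixture of exponentials with weights $\Lambda_j^\vartheta/\Lambda_j$ and rates $\mu_j^\vartheta-\Lambda_j^\vartheta$, i.e., hyperexponential.

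\emph{Step 4: expectation.} Taking expectations, or applying Proposition~\ref{prop:ET_general}\,\ref{prop:dsr} directly with $\lambda^\vartheta=\Lambda_j^\vartheta$ and $\mu^\vartheta=\mu_j^\vartheta$, gives \eqref{eq:expected_dsr}.

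One degenerate case needs a remark. If some $\Lambda_j^\vartheta=0$, that mixture component simply has weight zero. If $\Lambda_j=0$, no requests arrive at $j$; I would assume $\I_j\neq\emptyset$ to avoid this case.
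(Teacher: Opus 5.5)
Your proposal is correct and follows the same route as the paper. Lemma~\ref{lemma:arrivals} gives independent Poisson class streams, the disaggregated regime together with \ref{a4} yields $|\Theta|$ independent $\mathrm{M}/\mathrm{M}/1$ queues, and Proposition~\ref{prop:ET_general}.\ref{prop:dsr} delivers \eqref{eq:expected_dsr}. Your Step~3 additionally justifies the hyperexponential claim, which the paper asserts without argument; note that it tacitly assumes a FCFS discipline so that the $\mathrm{M}/\mathrm{M}/1$ sojourn time is exponential. Your remark excluding $\Lambda_j=0$ is a sensible caveat that the paper omits.
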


\begin{proof}
By Lemma~\ref{lemma:arrivals}, the arrival flow is split into $|\Theta|$ independent Poisson streams with rates $\Lambda_j^\vartheta$. Attaching \ref{a4} and the disaggregated service regime, it follows that each stream is served by an $\mathrm{M}/\mathrm{M}/1$ queue with service rates $\mu_j^\vartheta$, with $\vartheta \in\Theta$. Finally, the expected sojourn time is derived from Proposition \ref{prop:ET_general}.\ref{prop:dsr}.
\end{proof}

Figure~\ref{fig:queue_dsr} illustrates DSR at an edge server $j$. The demand points assigned to $j$, collected in $\mathcal I_j$, generate independent Poisson request streams with rates $\xi_i$, which aggregate into a total arrival flow $\Lambda_j$. Upon reaching the edge server, each request is classified by the cache layer into one of the request types $\vartheta\in\Theta$. In the standard two-class setting, these are cache hits, with probability $p_i^H=p_i$, and cache misses, with probability $p_i^M=1-p_i$. This classification splits the incoming flow into independent streams with rates $\Lambda_j^\vartheta$, $\vartheta\in\Theta$. Under DSR, these streams are processed in separate queueing subsystems, each modeled as an $\mathrm{M}/\mathrm{M}/1$ system with parameters $(\Lambda_j^\vartheta,\mu_j^\vartheta)$. Miss requests additionally trigger a retrieval operation from the assigned origin server $k$, incurring an upstream delay of $\kappa_2 \D(x_j,b_k)$, whereas all requests are subject to the user-to-edge access delay $\kappa_1 \D(a_i,x_j)$.
Thus, DSR captures heterogeneous congestion effects through class-specific queues, while the coupling among classes is induced by common location-allocation decisions and by the shared origin-assignment structure.

\begin{figure}[h]
\centering

\begin{tikzpicture}[
    >=Latex,
    scale=0.8,
    inqueue/.style   = {draw,fill=blue!20,minimum width=4mm,minimum height=4mm},
    queuebox/.style  = {draw,fill=blue!5,rounded corners=1pt},
    modebox/.style   = {draw,fill=orange!12,rounded corners=1pt},
    label/.style     = {font=\tiny},
    thinarr/.style   = {->,thin},
    thickarr/.style  = {->,line width=0.9pt},
    dashedarr/.style = {->,line width=0.8pt,dotted},
    every node/.style = {align=center},
    region/.style    = {draw,rounded corners=3pt,fill=gray!4},
]

\def\EdgeW{1.05cm}
\def\EdgeH{1.35cm}
\def\OrigW{0.92cm}
\def\OrigH{1.18cm}

\tikzset{
    demand/.style={
        inner sep=0pt,
        outer sep=0pt,
        text=black,
        font=\fontsize{13}{13}\selectfont
    },
    facilityimg/.style={
        rectangle,
        minimum width=\EdgeW,
        minimum height=\EdgeH,
        inner sep=0pt,
        outer sep=0pt,
        draw=none,
        path picture={
            \node at (path picture bounding box.center)
            {\includegraphics[width=\EdgeW,height=\EdgeH]{server.png}};
        }
    },
    originimg/.style={
        rectangle,
        minimum width=\OrigW,
        minimum height=\OrigH,
        inner sep=0pt,
        outer sep=0pt,
        draw=none,
        path picture={
            \node at (path picture bounding box.center)
            {\includegraphics[width=\OrigW,height=\OrigH]{server.png}};
            \fill[green!60,opacity=.33]
                (path picture bounding box.south west)
                rectangle
                (path picture bounding box.north east);
        }
    }
}

\draw[region] (-2.2,-3) rectangle (10.8,2.2);

\coordinate (D0)      at (0,0);
\coordinate (Router)  at (1,0);
\coordinate (SplitHM) at (3,0);
\coordinate (HitQ)    at (6.75,1.1);
\coordinate (MissQ)   at (6.75,-1.1);
\coordinate (Origin)  at (8.25,-2.3);
\coordinate (Out)     at (10.4,0);

\node[demand] (d1) at ($(D0)+(-1.0,0.95)$) {\faUser};
\node[demand] (d2) at ($(D0)+(-1.35,0.20)$) {\faUser};
\node[demand] (d3) at ($(D0)+(-1.05,-0.60)$) {\faUser};
\node[demand] (d4) at ($(D0)+(-1.75,-1.25)$) {\faUser};

\node[label,anchor=west,xshift=5pt] at ($(D0)+(-0.05,-0.85)$) {$\xi_i$};

\node[label] at ($(D0)+(0,1.3)$) {$\kappa_1\D(a_i,x_j)$};

\node[facilityimg] (router) at (Router) {};
\node[
    fill=white,
    fill opacity=.55,
    text opacity=1,
    rounded corners=2pt,
    inner sep=1pt
] at (router.center) {$x_j$};

\foreach \k/\y in {1/0.55, 2/0.10, 3/-0.35, 4/-0.90}{
  \draw[thinarr] (d\k.east) .. controls ($(d\k.east)+(0.4,\y)$) and ($(router.west)+(-0.55,\y)$) .. (router.west);
}

\node[draw,rounded corners=1pt,fill=gray!10,inner sep=4pt,minimum size=3.4mm] (split) at (SplitHM) {\hspace*{0.15cm}};
\draw[thickarr] (router.east) -- node[midway,above,label] {$\Lambda_{j}$} (split.west);

\node[label,above,yshift=4pt] at (SplitHM) {cache};
\node[label,below,xshift=25pt] at ($(SplitHM)+(0.25,0.55)$) {$p_{i}$};
\node[label,above,xshift=25pt] at ($(SplitHM)+(0.25,-0.55)$) {$1-p_i$};

\draw[queuebox] ($(HitQ)+(-2.2,-0.38)$) rectangle ($(HitQ)+(0,0.38)$);
\node[label] at ($(HitQ)+(-1.1,0.72)$) {hit queue};
\foreach \x in {-1.85,-1.35,-0.85,-0.35}{
  \node[inqueue] at ($(HitQ)+(\x,0)$) {};
}
\draw[modebox] ($(HitQ)+(0.4,-0.40)$) rectangle ($(HitQ)+(2.2,0.40)$);
\node[label] at ($(HitQ)+(1.35,0)$) {$\mathrm M/\mathrm M/1$\\$\Lambda^H_{j},\, C^H_{j}$};

\draw[queuebox] ($(MissQ)+(-2.2,-0.38)$) rectangle ($(MissQ)+(0,0.38)$);
\node[label] at ($(MissQ)+(-1.1,0.72)$) {miss queue};
\foreach \x in {-1.85,-1.35,-0.85}{
  \node[inqueue] at ($(MissQ)+(\x,0)$) {};
}
\draw[modebox] ($(MissQ)+(0.4,-0.40)$) rectangle ($(MissQ)+(2.2,0.40)$);
\node[label] at ($(MissQ)+(1.35,0)$) {$\mathrm M/\mathrm M/1$\\$\Lambda^M_{j},\, C^M_{j}$};

\draw[thickarr]
  (split.east)
  .. controls ($(split.east)+(0.7,0.8)$)
  .. node[midway,above,label] {$\Lambda^H_{j}$}
  ($(HitQ)+(-2.2,0)$);

\draw[thickarr]
  (split.east)
  .. controls ($(split.east)+(0.7,-0.8)$)
  .. node[midway,below,label] {$\Lambda^M_{j}$}
  ($(MissQ)+(-2.2,0)$);

\node[originimg] (origin) at (Origin) {};

\draw[dashedarr]
  ($(MissQ)+(0.5,-0.4)$)
  .. controls ($(MissQ)+(1,-1.2)$) and ($(Origin)+(-1,0.6)$)
  .. node[midway,right,label,below,xshift=-10pt] {fetch}
  (origin.west);

\draw[dashedarr]
  (origin.east)
  .. controls ($(Origin)+(0.5,0.3)$) and ($(MissQ)+(2,-1)$)
  .. node[midway,right,label] {$\kappa_2\D(x_j,b_k)$}
  ($(MissQ)+(2.1,-0.4)$);

\node[inqueue,minimum size=6mm,fill=red!35] (end) at (Out) {};

\draw[thickarr]
  ($(HitQ)+(2.2,0)$)
  .. controls ($(HitQ)+(3.6,0.0)$) and ($(end)+(-1.3,0.8)$)
  .. (end.west);

\draw[thickarr]
  ($(MissQ)+(2.2,0)$)
  .. controls ($(MissQ)+(3.6,0.0)$) and ($(end)+(-1.3,-0.8)$)
  .. (end.west);

\end{tikzpicture}
\caption{Diagram of edge server operation under DSR.\label{fig:queue_dsr}}
\end{figure}

\noindent\textbf{Integrated service regime (ISR).} Under ISR, all requests are processed within a single shared queue. Cache hits and misses correspond to different service requirements within the same system, yielding a mixed service-time distribution.

\begin{theorem}\label{lemma:isr}
Under assumptions \ref{a1}--\ref{a4} and an integrated service regime. Given any edge server $j\in\J$. If $
\sum_{\vartheta \in \Theta} \frac{\Lambda^\vartheta_j}{\mu^\vartheta_j} < 1$, the service system at $j$ follows an $\mathrm{M}/\mathrm{H}_{|\Theta|}/1$ queue and the expectation of the sojourn time $T_j$ is
\begin{equation}\label{eq:expected_isr}
    \E[T_j]=
    \frac{1}{\Lambda_j} \sum_{\vartheta\in \Theta} \frac{\Lambda_j^\vartheta}{\mu^\vartheta_j}
    +
\frac{ \sum_{\vartheta \in \Theta} \frac{\Lambda_j^\vartheta}{(\mu^\vartheta_j)^2}}{
1- \sum_{\vartheta\in \Theta} \frac{\Lambda_j^\vartheta}{\mu^\vartheta_j}}.
\end{equation}
\end{theorem}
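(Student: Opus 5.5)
The plan is to mirror the proof of Theorem~\ref{lemma:dsr}. We identify the queueing model at edge server $j$ from Lemma~\ref{lemma:arrivals}, and then specialize the Pollaczek--Khinchine formula of Proposition~\ref{prop:ET_general}.\ref{prop:isr} by direct substitution.

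First, by Lemma~\ref{lemma:arrivals}(i), the aggregate arrival process at $j$ is Poisson with rate $\Lambda_j=\sum_{i\in\I_j}\xi_i$. By part (ii), it decomposes into independent thinned Poisson streams with rates $\Lambda_j^\vartheta$, where $\sum_{\vartheta\in\Theta}\Lambda_j^\vartheta=\Lambda_j$. Under ISR, all requests join a single FIFO queue served by one server.

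Next, we determine the service-time law of an arbitrary arriving request. The Poisson colouring property gives that each arrival, independently of the past and of the queue state (here we use \ref{a1}), belongs to class $\vartheta$ with probability $\Lambda_j^\vartheta/\Lambda_j$. By \ref{a4}, its service time is then exponential with rate $\mu_j^\vartheta$, independently across requests. Hence service times are i.i.d.\ with the hyperexponential law $\mathrm{H}_{|\Theta|}$, having weights $\Lambda_j^\vartheta/\Lambda_j$ and rates $\mu_j^\vartheta$, and they are independent of the arrival process. This identifies the system as an $\mathrm{M}/\mathrm{H}_{|\Theta|}/1$ queue. This is the only conceptual step needing care: we must argue that the mixture weights are the flow proportions $\Lambda_j^\vartheta/\Lambda_j$, even though the classification probabilities $p_i^\vartheta$ differ across demand points. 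Superposition plus thinning handles this, since the merged process is a marked Poisson process whose marks are i.i.d.\ with that distribution.

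Finally, we verify stability and compute. Proposition~\ref{prop:ET_general}.\ref{prop:isr} gives $\Lambda_j\mathbb{E}[S]=\sum_{\vartheta}\Lambda_j^\vartheta/\mu_j^\vartheta$, so the stated hypothesis is exactly the condition $\lambda\mathbb{E}[S]<1$. Substituting $\mathbb{E}[S]=\frac{1}{\Lambda_j}\sum_\vartheta \Lambda_j^\vartheta/\mu_j^\vartheta$ and $\mathbb{E}[S^2]=\frac{2}{\Lambda_j}\sum_\vartheta\Lambda_j^\vartheta/(\mu_j^\vartheta)^2$ into \eqref{eq:pollaczek-khinchine}, the factor $\Lambda_j\cdot\frac{2}{\Lambda_j}$ over $2$ cancels to $1$. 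This yields \eqref{eq:expected_isr}.
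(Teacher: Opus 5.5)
Your proposal is correct and follows the same route as the paper. You use Lemma~\ref{lemma:arrivals} to obtain the class-split Poisson flows, identify the service time as a hyperexponential mixture with weights $\Lambda_j^\vartheta/\Lambda_j$ (so the system is an $\mathrm{M}/\mathrm{H}_{|\Theta|}/1$ queue), and substitute into the Pollaczek--Khinchine formula of Proposition~\ref{prop:ET_general}; your marked-Poisson justification of those weights and your explicit check that the hypothesis coincides with $\Lambda_j\mathbb{E}[S]<1$ spell out steps the paper leaves implicit.
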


\begin{proof}
    By Lemma~\ref{lemma:arrivals}, the overall Poisson arrival flow is split into $|\Theta|$ independent Poisson streams with rates $\Lambda_j^\vartheta$. Attaching \ref{a4} and the integrated service regime, it follows that the service time is a mixture of exponential distributions with probabilities $\Lambda_j^\vartheta/\Lambda_j$ and rates $\mu_j^\vartheta$, for $\vartheta \in \Theta$, leading to an $\mathrm{M}/\mathrm{H}_{|\Theta|}/1$ queue. Finally, the expected sojourn time is derived from Proposition \ref{prop:ET_general}.\ref{prop:isr}.
\end{proof}

DSR provides a structured representation of heterogeneous service regimes, while ISR captures the interaction between cache hits and misses through shared congestion effects.

Figure~\ref{fig:queue_isr} illustrates ISR at an edge server $j$. As in the previous setting, the demand points in $\mathcal I_j$ generate independent Poisson flows with rates $\xi_i$, which aggregate into a total arrival rate $\Lambda_j$ at the edge server. Upon arrival, each request is classified by the cache mechanism into one of the types $\vartheta\in\Theta$. In the standard case, this corresponds to a hit or a miss with probabilities $p_i^H = p_i$ and $p_i^M=1-p_i$, respectively. However, in contrast to DSR, this classification does not induce separate queueing subsystems. Instead, all requests join a single shared queue and are processed by a common service facility modeled as an $\mathrm{M}/\mathrm{H}_{|\Theta|}/1$ system. The service time of each request depends on its type, leading to a mixed service-time distribution where type-$\vartheta$ requests are served with exponential rates $\mu_j^\vartheta$. Consequently, congestion effects are shared across all requests, and the delay experienced by one class is influenced by the presence of the others. Additionally, miss requests trigger a retrieval operation from the assigned origin server $k$, incurring an upstream delay of $\kappa_2 \D(x_j,b_k)$, while all requests incur the access delay $\kappa_1 \D(a_i,x_j)$. 
Thus, ISR aggregates heterogeneous service requirements into a single congestion system and is therefore more restrictive in terms of shared capacity effects than DSR.

\begin{figure}[h]
\centering

\begin{tikzpicture}[
    >=Latex,
    scale=0.8,
    inqueue/.style   = {draw,fill=blue!20,minimum width=4mm,minimum height=4mm},
    queuebox/.style  = {draw,fill=blue!5,rounded corners=1pt},
    modebox/.style   = {draw,fill=orange!12,rounded corners=1pt},
    label/.style     = {font=\tiny},
    thinarr/.style   = {->,thin},
    thickarr/.style  = {->,line width=0.9pt},
    dashedarr/.style = {->,line width=0.8pt,dotted},
    every node/.style = {align=center},
    region/.style    = {draw,rounded corners=3pt,fill=gray!4},
]

\def\EdgeW{1.05cm}
\def\EdgeH{1.35cm}
\def\OrigW{0.92cm}
\def\OrigH{1.18cm}

\tikzset{
    demand/.style={
        inner sep=0pt,
        outer sep=0pt,
        text=black,
        font=\fontsize{13}{13}\selectfont
    },
    facilityimg/.style={
        rectangle,
        minimum width=\EdgeW,
        minimum height=\EdgeH,
        inner sep=0pt,
        outer sep=0pt,
        draw=none,
        path picture={
            \node at (path picture bounding box.center)
            {\includegraphics[width=\EdgeW,height=\EdgeH]{server.png}};
        }
    },
    originimg/.style={
        rectangle,
        minimum width=\OrigW,
        minimum height=\OrigH,
        inner sep=0pt,
        outer sep=0pt,
        draw=none,
        path picture={
            \node at (path picture bounding box.center)
            {\includegraphics[width=\OrigW,height=\OrigH]{server.png}};
            \fill[green!60,opacity=.33]
                (path picture bounding box.south west)
                rectangle
                (path picture bounding box.north east);
        }
    }
}

\draw[region] (-2.2,-2.8) rectangle (10.8,2.2);

\coordinate (D0)      at (0,0);
\coordinate (Router)  at (1,0);
\coordinate (SplitHM) at (3,0);
\coordinate (SharedQ) at (6.4,0);
\coordinate (Origin)  at (8.35,-1.75);
\coordinate (Out)     at (10.4,0);

\node[demand] (d1) at ($(D0)+(-1.0,0.95)$) {\faUser};
\node[demand] (d2) at ($(D0)+(-1.35,0.20)$) {\faUser};
\node[demand] (d3) at ($(D0)+(-1.05,-0.60)$) {\faUser};
\node[demand] (d4) at ($(D0)+(-1.75,-1.25)$) {\faUser};

\node[label,anchor=west, xshift=5pt] at ($(D0)+(-0.05,-0.85)$) {$\xi_i$};

\node[label] at ($(D0)+(0,1.3)$) {$\kappa_1\D(a_i,x_j)$};

\node[facilityimg] (router) at (Router) {};
\node[
    fill=white,
    fill opacity=.55,
    text opacity=1,
    rounded corners=2pt,
    inner sep=1pt
] at (router.center) {$x_j$};

\foreach \k/\y in {1/0.55, 2/0.10, 3/-0.35, 4/-0.90}{
  \draw[thinarr] (d\k.east) .. controls ($(d\k.east)+(0.4,\y)$) and ($(router.west)+(-0.55,\y)$) .. (router.west);
}

\node[draw,rounded corners=1pt,fill=gray!10,inner sep=4pt,minimum size=3.4mm] (split) at (SplitHM) {\hspace*{0.15cm}};
\draw[thickarr] (router.east) -- node[midway,above,label] {$\Lambda_j$} (split.west);
\node[label,above,yshift=4pt] at (SplitHM) {cache};

\node[label] at ($(SplitHM)+(1.05,0.62)$) {hit: $p_i$};
\node[label] at ($(SplitHM)+(1.15,-0.62)$) {miss: $1-p_i$};

\draw[queuebox] ($(SharedQ)+(-2.0,-0.38)$) rectangle ($(SharedQ)+(0,0.38)$);
\node[label] at ($(SharedQ)+(-1.0,0.72)$) {shared queue};
\foreach \x in {-1.65,-1.20,-0.75,-0.30}{
  \node[inqueue] at ($(SharedQ)+(\x,0)$) {};
}

\draw[modebox] ($(SharedQ)+(0.4,-0.48)$) rectangle ($(SharedQ)+(2.45,0.48)$);
\node[label] at ($(SharedQ)+(1.42,0)$) {$\mathrm M/\mathrm H_2/1$\\$\Lambda_j,\; S_j$};

\draw[thickarr]
  (split.east)
  -- node[midway,above,label] {$\Lambda_j$}
  ($(SharedQ)+(-2.0,0)$);

\node[label] at ($(SharedQ)+(1.3,1.0)$)
{$S_j\sim
\begin{cases}
\mathrm{Exp}(\mu_j^H), & \text{hit},\\
\mathrm{Exp}(\mu_j^M), & \text{miss}.
\end{cases}$};

\node[originimg] (origin) at (Origin) {};

\draw[dashedarr]
  ($(SharedQ)+(1.45,-0.48)$)
  .. controls ($(SharedQ)+(1.8,-1.2)$) and ($(Origin)+(-0.9,0.5)$)
  .. node[midway,left,label] {miss fetch}
  (origin.west);

\draw[dashedarr]
  (origin.east)
  .. controls ($(Origin)+(0.55,0.25)$) and ($(SharedQ)+(2.5,-0.95)$)
  .. node[midway,right,label] {$\kappa_2\D(x_j,b_k)$}
  ($(SharedQ)+(2.45,-0.20)$);

\node[inqueue,minimum size=6mm,fill=red!35] (end) at (Out) {};

\draw[thickarr]
  ($(SharedQ)+(2.45,0)$)
  .. controls ($(SharedQ)+(3.6,0)$) and ($(end)+(-1.3,0)$)
  .. (end.west);

\end{tikzpicture}
\caption{Diagram of edge server operation under ISR.\label{fig:queue_isr}}
\end{figure}

Together with \ref{a1}--\ref{a4} on the arrival and service processes, the following regime-dependent stability conditions complete the system of assumptions for the underlying queueing dynamics of the CDN.

\begin{enumerate}[label=(A\arabic*), ref={\rm A\arabic*}, start=5]

\item \label{a5}\textbf{(Regime-dependent stability conditions)}
\begin{enumerate}[label=(A5.\alph*), ref= A5.\alph*]
\item \label{a5a}
Under an independent service structure (DSR), stability requires
$$
\Lambda^\vartheta_j < \mu^\vartheta_j,
\qquad \forall \vartheta \in \Theta,\ \forall j\in\J.
$$

\item \label{a5b}
Under a shared service structure (ISR), stability requires
$$
\sum_{\vartheta \in \Theta} \frac{\Lambda^\vartheta_j}{\mu^\vartheta_j} < 1,
\qquad \forall j\in\J.
$$

\end{enumerate}

\end{enumerate}

\begin{theorem}\label{thm:cdn_service_regimes}
Let $R_i$ be the random response time of a request generated at demand point $i \in \I$ within a {\rm CDN} under assumptions \ref{a1}--\ref{a5}. In both regimes, the expected response time is given by
\begin{equation}\label{eq:expected_time}
    \mathbb{E}[R_i]
=
\kappa_1 \D(a_i,x_{j(i)})+
\mathbb{E}[T_{j(i)}]
+ \kappa_2 \left(1-
\frac{\Lambda_{j(i)}^H}{\Lambda_{j(i)}}\right)  \D(x_{j(i)},b_{k(j(i))}),
\end{equation}
where $j(i)\in\J$ and $k(j(i))\in\K$ stand for the edge server and origin server assigned to demand point $i$, respectively.

The expressions for the expectation $\E[T_j]$ are \eqref{eq:expected_dsr} and \eqref{eq:expected_isr} under {\rm DSR} and {\rm ISR}, respectively.
\end{theorem}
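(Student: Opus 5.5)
The plan is to decompose the random response time additively and then take expectations term by term. For a request generated at $i$ and routed according to \ref{a1} to $j=j(i)$, with origin $k=k(j)$, I would write
\begin{equation*}
R_i = \kappa_1\D(a_i,x_j) + T_j + \kappa_2\,\mathbf{1}\{\text{miss}\}\,\D(x_j,b_k).
\end{equation*}
The three terms are the deterministic access delay, the sojourn time at the edge server (queueing plus processing), and the upstream retrieval delay. The last term is incurred only when the request is not of type $H$. Since the locations and assignments are deterministic and independent of the queue state (\ref{a1}), the distances are constants. Linearity of expectation then gives
\begin{equation*}
\E[R_i]=\kappa_1\D(a_i,x_j)+\E[T_j]+\kappa_2\,\mathbb{P}(\text{miss})\,\D(x_j,b_k),
\end{equation*}
and no independence between $T_j$ and the miss indicator is needed.

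The second step identifies $\E[T_j]$. Under \ref{a5}, the regime-dependent stability condition holds at every server. Theorem~\ref{lemma:dsr} then gives $\E[T_j]$ by \eqref{eq:expected_dsr} under DSR, and Theorem~\ref{lemma:isr} gives \eqref{eq:expected_isr} under ISR. Both theorems refer to the steady-state sojourn time of an arbitrary arrival at $j$. To justify using this quantity for a request entering $j$, I would invoke Lemma~\ref{lemma:arrivals}: the input at $j$ is a superposition of independent Poisson streams, so a tagged request from $i$ is a Poisson arrival at $j$. By PASTA it therefore sees the time-stationary state of the system.

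The third step computes the miss probability. By Lemma~\ref{lemma:arrivals}, the class-$\vartheta$ streams at $j$ have rates $\Lambda_j^\vartheta$. An arbitrary arrival at $j$ is a cache hit with probability $\Lambda_j^H/\Lambda_j$, by the standard competing-Poisson-streams argument. Its miss probability is therefore $\sum_{\vartheta\neq H}\Lambda_j^\vartheta/\Lambda_j = 1-\Lambda_j^H/\Lambda_j$, using $\sum_\vartheta \Lambda_j^\vartheta=\Lambda_j$. The multi-class case, in which every non-$H$ class triggers a retrieval from $k$, is handled identically.

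The main obstacle is the consistency of steps two and three at the level of an individual demand point. Both $\E[T_j]$ and $\Lambda_j^H/\Lambda_j$ describe an arbitrary request at $j$, whereas a request from $i$ is a hit with probability $p_i^H$. Under DSR, its conditional sojourn time given its class is also class-specific. I would handle this by making explicit that the cache classification of a request at $j$ is treated through the server-level mixture. A tagged request routed to $j$ is indistinguishable, given only the routing, from an arbitrary arrival of the merged Poisson stream. Its class therefore has law $(\Lambda_j^\vartheta/\Lambda_j)_\vartheta$, and this is exact whenever the $p_i^\vartheta$ coincide within $\I_j$. In general, \eqref{eq:expected_time} should be read as the flow-averaged expected response time over $\I_j$. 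I would check that conditioning on class and then averaging with weights $\xi_ip_i^\vartheta/\Lambda_j$ over $i\in\I_j$ reproduces exactly \eqref{eq:expected_dsr} and the term $1-\Lambda_j^H/\Lambda_j$. This confirms that the formula is the correct $\xi$-weighted expectation, which is the quantity entering the design objective.
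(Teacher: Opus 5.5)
Your argument follows the paper's proof. The paper writes $R_i=\kappa_1\D(a_i,x_{j(i)})+T_{j(i)}+\kappa_2\D(x_{j(i)},b_{k(j(i))})B_i$ with a Bernoulli miss indicator $B_i$, takes $\E[T_{j(i)}]$ from Theorems~\ref{lemma:dsr} and~\ref{lemma:isr}, and takes expectations. Your PASTA remark and your note that no independence between $T_j$ and the miss indicator is needed are harmless additions that the paper leaves implicit.

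The real difference is your final paragraph, and there your objection is correct. The paper's proof simply asserts $B_i\sim{\rm Bernoulli}(1-\Lambda^H_{j(i)}/\Lambda_{j(i)})$. Under \ref{a3}, however, a request known to come from $i$ is a miss with probability $1-p_i^H$. Under DSR, with $j=j(i)$, its conditional mean sojourn time is $\sum_{\vartheta\in\Theta}p_i^\vartheta/(\mu_j^\vartheta-\Lambda_j^\vartheta)$, not \eqref{eq:expected_dsr}.

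Hence \eqref{eq:expected_time} equals the true $\E[R_i]$ when $p_i^\vartheta=\Lambda_j^\vartheta/\Lambda_j$ for all $\vartheta$, for example when cache probabilities are homogeneous on $\I_j$. In general, only the $\xi$-weighted sums over $\I_j$ of both sides agree, as your averaging check shows. You are therefore right not to claim the statement verbatim.

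Your sentence that a tagged request is ``indistinguishable, given only the routing, from an arbitrary arrival'' describes a request of unknown origin. It cannot be applied to $R_i$ itself, so use it only to motivate the averaged reading.

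Two refinements:
\begin{itemize}
\item For ISR, add that under FCFS the waiting time of a Poisson arrival is independent of its own class. Only the service term $\sum_{\vartheta}p_i^\vartheta/\mu_j^\vartheta$ then depends on $i$, and $\xi$-averaging it recovers $\frac{1}{\Lambda_j}\sum_{\vartheta}\Lambda_j^\vartheta/\mu_j^\vartheta$ in \eqref{eq:expected_isr}.
\item Your closing claim that the $\xi$-weighted quantity is what enters the design objective is too strong. $\Phi$ acts on the individual $r_i$: SUM is an unweighted sum, and CVaR and EXP are nonlinear. The averaged reading therefore does not by itself validate the per-demand-point values used in the optimization models.
\end{itemize}
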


\begin{proof}
    Proceeding backward, the last regime-dependent claim about the expectation of the sojourn time $T_j$ is a corollary of Theorems \ref{lemma:dsr} and \ref{lemma:isr}, respectively. On the other hand, the set of assumptions enables the decomposition of the random response time of a request $R_i$ into three separate terms. First, a deterministic time induced by the delay of communication from the demand point to its own single assigned edge server (\ref{a1}), which is given by the distance $\D(a_i,x_{j(i)})$ times the propagation delay per unit length $\kappa_1$. Second, the stochastic sojourn time at server $j(i)$, $T_{j(i)}$, induced by the congestion and its service dynamics. Third, the stochastic time induced by the delay of communication from the edge server to its own single assigned origin server (\ref{a1}) in case the request is not a cache hit. The latter component follows a Bernoulli distribution, which takes the value $0$ with probability $\frac{\Lambda^H_{j(i)}}{\Lambda_{j(i)}}$ and $\kappa_2\D(x_{j(i)},b_{k(j(i))})$ otherwise. Hence, $R_i$ equals
    \begin{equation*}
        R_i = \kappa_1 \D(a_i,x_{j(i)}) + T_{j(i)} + \kappa_2\D(x_{j(i)},b_{k(j(i))})B_i,
    \end{equation*}
    where $B_i\sim {\rm Bernoulli}\left(1-
\frac{\Lambda_{j(i)}^H}{\Lambda_{j(i)}}\right)$. Thus, Equation~\eqref{eq:expected_time} follows from the law of total expectation.
\end{proof}
Theorem~\ref{thm:cdn_service_regimes} provides the response-time expression that will be embedded in the optimization models developed in the next sections. The deterministic distance terms account for access and origin-retrieval delays, while the term $\mathbb{E}[T_j]$ contains the nonlinear congestion structure. It is precisely this latter component, together with the stability conditions, that motivates the conic lifting framework developed in the sequel.

In the following sections, we integrate the structure of the response-time expressions and the associated stability conditions to derive optimization models for the design of CDN architectures under the different service regimes. For the sake of clarity, we first focus on a simplified setting consisting of a single edge server.

In this setting, even when several origin servers are available, the absence of edge-server assignment decisions leads to a continuous conic optimization problem. This single-edge-server case isolates the core nonlinear queueing and location-capacity structure of the model and allows us to derive fully conic reformulations of the resulting optimization problem. These reformulations represent the nonlinear expressions appearing in the objective function and the constraints through suitable conic liftings, and they can be solved using standard conic optimization solvers such as MOSEK. In this reduced setting, the problem also exhibits strong connections with continuous facility location models of Weber type.

Building upon these reformulations, we then extend the analysis to the general setting with multiple edge servers and multiple origin servers. This leads to a mixed-integer conic optimization formulation that integrates location, allocation, origin-assignment, and capacity decisions within a unified framework, while preserving the conic structure of the underlying queueing-performance expressions.

\section{Single-Server CDN Design via Conic Optimization}\label{sec:singleServer}

In this section, we first analyze the CDN design problem in case a single edge server is to be designed. We provide a conic optimization framework for the problem that will set the basis for capturing the nonlinearity that encodes the system accuracy. It leads to the final discrete conic optimization model as an exact approach to the very general multi-server CDN design problem derived throughout this paper. 

Given a finite set of demand points $\{a_i:i\in\I\}\subset \R^m$ and also a location of single origin server $b\in\R^m$, the aim of the single-server CDN design problem reduces to find the coordinates of a placement $x\in\R^m$ and a stable and bounded vector of cache service rates $\mu=\left(\mu^\vartheta\right)_{\vartheta\in \Theta}\in\R_+^{|\Theta|}$ of the edge server that minimize a loss function of the expected response times, $(\E[R_i(x,\mu)])_{i\in\I}$. Notice that in this simpler case, the server-dependency in the arrival flows $\Lambda_j$ and $\Lambda_j^\vartheta$ (Lemma \ref{lemma:arrivals}) is not needed, thereby we can omit it and denote $\Lambda=\sum_{i\in\I}\xi_i$ and $\Lambda^\vartheta=\sum_{i\in\I}\xi_ip^\vartheta_i$ for all $\vartheta\in\Theta$. By Theorem~\ref{thm:cdn_service_regimes}, the expected response time vector is given, componentwise (per each demand), by
\begin{equation}
    \E[R_i(x,\mu)] = \kappa_1 \D(a_i,x)+ \E[T(\mu)] + \kappa_2\left(1-\frac{\Lambda^H}{\Lambda}\right) \D(x,b).
\end{equation}

We assume that the vector of service rates, in addition to satisfying the regime-dependent stability conditions \ref{a5a} or \ref{a5b}, is restricted to lie in a bounded region $\mathcal{C} \subset \R^{|\Theta|}_+$. This assumption naturally allows for the inclusion of an \emph{efficiency-oriented bound}, which is meaningful in the context of CDN design. Specifically, let $c^\vartheta >0$, for $\vartheta \in \Theta$, denote the unit cost associated with the installation of the edge server with service rate of type $\vartheta$, and let $\Gamma>0$ represent the available budget for capacity investment. Then, the feasible region $\mathcal{C}$ can be defined as
\begin{equation}
\mathcal{C} = \left\{ (\mu^\vartheta)_{\vartheta\in \Theta} \in \R^{|\Theta|}_+ : \sum_{\vartheta \in \Theta} c^\vartheta \mu^\vartheta \le \Gamma \right\}.
\label{budget}\tag{Budget}
\end{equation}

The overall feasible region for the service rates is then denoted by $\mathcal{C}_{\rm stab}$ and is defined as the intersection of $\mathcal{C}$ with the region induced by the corresponding stability conditions.

The expected response times of the different demand users are aggregated by means of a conic representable, nondecreasing, and lower semicontinuous loss function $\Phi:\R^{|\I|}\to \R$. Thus, the single-server CDN design problem can be stated as follows:

\begin{equation}\label{1cdn}\tag{1-CDN}
    \min_{x\in\R^m, \mu\in \mathcal{C}_{\rm stab}} \Phi(\E[R_i(x,\mu)])_{i\in\I}.
\end{equation}

\begin{remark}
In the case where $\Phi$ is additive (at least on the nonnegative orthant), then \eqref{1cdn} can be decomposed as
\begin{equation*}
\begin{array}{ccc}
\displaystyle\min_{x\in\R^m} \ \Phi\left(\kappa_1 \D(a_i,x) + \kappa_2 \left(1-\frac{\Lambda^H}{\Lambda}\right) \D(x,b)\right)_{i\in\I}
& + &
\displaystyle\min_{\mu\in \mathcal{C}_{\rm stab}} \ \Phi\big(\E[T(\mu)]\big)_{i\in\I}.
\end{array}
\end{equation*}

In particular, if $\Phi$ is the sum operator, then the first problem above coincides with a variant of the classical Weber problem~\citep{weiszfeld1937point}, where $a_i$ for $i\in\I$ plus $b$ act as $|\I|+1$ demand points with weights $\kappa_1$ for the $a_i$-points and $\kappa_2|\I|\left(1-\frac{\Lambda^H}{\Lambda}\right)$ for the point $b$. This problem can be efficiently solved using modern conic optimization techniques~\citep{blanco2014revisiting}.

Furthermore, if the service rates are unbounded or the {\rm CDN} operates under negligible congestion, the second optimization problem above becomes trivially zero. In this case, the single-server {\rm CDN} design problem reduces to a Weber-type problem and can be solved accordingly. In our computational experiments, this framework is referred to as the {\rm \bf UNC} (uncongested) system.
\end{remark}

The core matter of our approach is to prove efficient conic liftings of the objective function and the stability conditions, which entail a unified conic programming reformulation for \eqref{1cdn} across the different regimes.\\

\noindent {\bf Stability of edge server.} The edge server operates as a service node processing incoming requests. To ensure stable operation of the system, the installed service capacity must exceed the incoming traffic and the cache dynamics.

Under DSR, cache hits and misses are processed in independent queueing subsystems. Stability requires that, for each cache status, the service capacity strictly exceeds the corresponding arrival rate. This is enforced through
\begin{align}
    \mu^\vartheta - \Lambda^\vartheta \ge \varepsilon,
    \qquad
    \vartheta \in \Theta,\label{stab:dsr}\tag{stab$_{\rm DSR}$}
\end{align}
where $\varepsilon > 0$ is a small safety margin that prevents the system from operating arbitrarily close to saturation.

Under ISR, all requests are processed within a single shared queueing system, and stability is governed by the traffic intensity $\sum_{\vartheta\in\Theta}\frac{\Lambda^\vartheta}{\mu^\vartheta}$, whose stability is ensured by imposing
\begin{align} \sum_{\vartheta\in \Theta} \frac{\Lambda^\vartheta}{\mu^\vartheta} \le 1 - \varepsilon. \label{stab:isr}\tag{stab$_{\rm ISR}$}
\end{align}

\begin{proposition}\label{prop:stab}
    The feasible set 
    \begin{equation*}
    \left\{(\mu,\phi)\in \R^{|\Theta|}_+\times\R^{|\Theta|}_+ :  (\mu^\vartheta, \phi^\vartheta,1)\in \SS^2_+\;  \forall \vartheta\in\Theta, \quad 1-\sum_{\vartheta\in\Theta} \Lambda^\vartheta\phi^\vartheta \geq \varepsilon\right\}
    \end{equation*}
    is a conic lifting of the feasible set of \eqref{stab:isr}.
\end{proposition}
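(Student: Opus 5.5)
We must show that the projection onto the $\mu$-variables of the set in Proposition~\ref{prop:stab} coincides with the feasible set of \eqref{stab:isr}. Here the feasible set of \eqref{stab:isr} is understood as the set of $\mu\in\R^{|\Theta|}_+$ with $\mu^\vartheta>0$ for all $\vartheta$ and $\sum_\vartheta \Lambda^\vartheta/\mu^\vartheta\le 1-\varepsilon$. The plan is a two-inclusion argument. Throughout, we exploit that each auxiliary variable $\phi^\vartheta$ plays the role of an upper bound on the reciprocal $1/\mu^\vartheta$, and that $\Lambda^\vartheta\ge 0$.

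\textbf{Step 1: the cone constraint.} By definition of $\SS^2_+$, the condition $(\mu^\vartheta,\phi^\vartheta,1)\in\SS^2_+$ means $\mu^\vartheta\phi^\vartheta\ge 1$ with $\mu^\vartheta,\phi^\vartheta\ge 0$. This forces $\mu^\vartheta>0$ and $\phi^\vartheta\ge 1/\mu^\vartheta$. Conversely, whenever $\mu^\vartheta>0$, the choice $\phi^\vartheta=1/\mu^\vartheta$ satisfies the constraint. So the cone constraint describes exactly the epigraph of $t\mapsto 1/t$ on $(0,\infty)$.

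\textbf{Step 2 (projection lies in the feasible set).} Take $(\mu,\phi)$ in the lifted set. By Step 1, $\mu^\vartheta>0$ for every $\vartheta$. Since $\Lambda^\vartheta\ge 0$, we get
\[
\sum_\vartheta \frac{\Lambda^\vartheta}{\mu^\vartheta}\le \sum_\vartheta \Lambda^\vartheta\phi^\vartheta\le 1-\varepsilon,
\]
so $\mu$ satisfies \eqref{stab:isr}.

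\textbf{Step 3 (every feasible point lifts).} Take $\mu$ feasible for \eqref{stab:isr}. The expression $\Lambda^\vartheta/\mu^\vartheta$ requires $\mu^\vartheta>0$, which is implicit in the domain of that constraint. Set $\phi^\vartheta=1/\mu^\vartheta$. Then $(\mu^\vartheta,\phi^\vartheta,1)\in\SS^2_+$ holds with equality, and the linear inequality is exactly \eqref{stab:isr}. Hence $(\mu,\phi)$ belongs to the lifted set.

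The construction is also conic in the formal sense recalled in Section~\ref{sec:MBC}. The lifted set is the preimage, under an affine map, of the product cone $(\SS^2_+)^{|\Theta|}\times\R_+^{1+2|\Theta|}$. The nonnegative-orthant factors account for the linear inequality and for the sign constraints on $\mu$ and $\phi$.

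\textbf{Expected obstacle.} The only delicate point is domain bookkeeping. One must make explicit that points with $\mu^\vartheta=0$ are excluded on both sides: the original constraint is undefined there, and the rotated cone constraint excludes them via $\mu^\vartheta\phi^\vartheta\ge 1$. One must also check that the relaxation $\phi^\vartheta\ge 1/\mu^\vartheta$, rather than equality, causes no loss of exactness. This follows from the monotonicity argument of Step 2, which relies on $\Lambda^\vartheta\ge 0$.
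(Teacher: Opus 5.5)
Your proof is correct and follows the same two-inclusion argument as the paper. You lift a feasible $\mu$ via $\phi^\vartheta = 1/\mu^\vartheta$, and for the converse you chain $\sum_{\vartheta}\Lambda^\vartheta/\mu^\vartheta \le \sum_{\vartheta}\Lambda^\vartheta\phi^\vartheta \le 1-\varepsilon$ using the rotated-cone constraint and then the linear constraint; your explicit handling of $\mu^\vartheta>0$ and of $\Lambda^\vartheta\ge 0$ makes precise what the paper's proof leaves implicit.
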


\begin{proof}
    Let $\mu\in \R^{|\Theta|}_+$ that satisfy \eqref{stab:isr}, then it follows that $\left(\mu, \left(\frac{1}{\mu^\vartheta}\right)_{\vartheta\in\Theta}\right)\in \R^{|\Theta|}_+\times\R^{|\Theta|}_+$ belongs to the conic lifting. Conversely, let $(\mu,\phi)\in \R^{|\Theta|}_+\times\R^{|\Theta|}_+$ in the conic lifting, thereby
    \begin{equation*}
        \sum_{\vartheta\in \Theta} \frac{\Lambda^\vartheta}{\mu^\vartheta} \leq \sum_{\vartheta\in\Theta} \Lambda^\vartheta\phi^\vartheta \leq 1-\varepsilon,
    \end{equation*}
    where the former inequality holds by $(\mu^\vartheta, \phi^\vartheta,1)\in \SS^2_+\;  \forall \vartheta\in\Theta$ and the latter does by $1-\sum_{\vartheta\in\Theta} \Lambda^\vartheta\phi^\vartheta \geq \varepsilon$. Therefore, $\mu$ satisfies \eqref{stab:isr}.
\end{proof}

\noindent {\bf Distance modeling.} The placement of the edge server is determined within the optimization model, which means that the distances between infrastructure components cannot be precomputed. Instead, these distances must be modeled explicitly as functions of the decision variables representing server location.

\begin{proposition}\label{prop:distance}
    Let $\D$ be a distance induced by an $\ell_p$-norm (with $1\leq p \in \mathbb{Q}$), i.e., $\D(x,y)=\|x-y\|_p$. The feasible set 
    \begin{equation*}
    \left\{(x,d)\in \R^m\times\R_+ : (a-x,d)\in \LL^m_p \right\}
    \end{equation*}
    is a conic lifting of $\D(a,\cdot)$, for all $a\in \R^m$.
\end{proposition}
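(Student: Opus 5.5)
The plan is to unwind the definition of conic lifting of a function given in Section~\ref{sec:MBC}. A convex function $f$ is conic representable when its epigraph is the projection of a set cut out by an affine image lying in a proper cone. So for $f=\D(a,\cdot)=\|a-\cdot\|_p$ we must show two things: the epigraph $\{(x,d): \|a-x\|_p\le d\}$ coincides with the displayed set, and the defining constraint has the required affine-in-cone form. This parallels the proof of Proposition~\ref{prop:stab}, with no auxiliary variables needed beyond $d$ itself.

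First, I would write the constraint as an affine map $(x,d)\mapsto (a-x,d)$ from $\R^{m+1}$ to $\R^{m+1}$. It is of the form $A\binom{x}{d}-b$ with $A=\mathrm{diag}(-I_m,1)$ and $b=(-a,0)$. The target is the $p$-order cone $\LL^m_p$, which for $p\ge1$ is a proper cone: it is closed and convex because $\|\cdot\|_p$ is a norm, it is pointed, and it has nonempty interior. Hence membership $(a-x,d)\in\LL^m_p$ is a conic constraint in the sense of the definition.

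Second, I would verify the set equality in both directions. If $d\ge \D(a,x)=\|a-x\|_p$, then automatically $d\ge0$ and $(a-x,d)\in\LL^m_p$. Conversely, $(a-x,d)\in\LL^m_p$ means exactly $\|a-x\|_p\le d$. Thus the displayed set is precisely the epigraph of $\D(a,\cdot)$; the sign restriction $d\in\R_+$ is redundant but harmless. At any optimum of a problem minimizing a nondecreasing function of $d$, the variable $d$ will equal $\D(a,x)$, which is how the lifting will be used inside \eqref{1cdn}.

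Finally, the hypothesis $p\in\mathbb{Q}$ is not needed for the equivalence itself. It serves, via the results cited in Section~\ref{sec:MBC} \citep{blanco2024minimal,blanco2025complexity}, to guarantee that $\LL^m_p$ is itself representable by finitely many second-order or rotated quadratic cones $\SS^2_+$, so the formulation remains within the SOC framework solvable by MOSEK. I would mention this as a remark rather than prove it. I expect no real obstacle here; the only point needing care is confirming that $\LL^m_p$ is a proper cone so that the definition of conic representability applies verbatim.
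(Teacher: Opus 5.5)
Your argument is correct and follows the same route as the paper, whose proof is the single observation that the displayed set is precisely the epigraph of $\D(a,\cdot)$; your explicit affine map, the check that $\LL^m_p$ is a proper cone, and the remark on the role of $p\in\mathbb{Q}$ just make explicit what the paper leaves implicit. (A small aside: your claim that $d=\D(a,x)$ at \emph{any} optimum needs strict monotonicity in $d$; for a merely nondecreasing objective it holds only at \emph{some} optimum, but this plays no role in the proposition itself.)
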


\begin{proof}
    The result follows directly from that the conic lifting is precisely the epigraph of $\D(a,\cdot)$.
\end{proof}

\noindent {\bf Expected sojourn time.} In what follows, we derive results on conic liftings of the expected sojourn times $\E[T(\mu)]$ under the two regimes  that we consider under congestion (DSR and ISR).

Under DSR, cache hits and cache misses are processed in independent queueing subsystems. The incoming flow is split into $|\Theta|$ streams with arrival rates $\Lambda^\vartheta$ and service rates $\mu^\vartheta$ for every $\vartheta\in\Theta$. 


\begin{theorem}\label{th:expectation_dsr}
    The feasible set 
    \begin{equation*}
    \left\{(\mu,\theta,\tau)\in \R^{|\Theta|}_+\times\R^{|\Theta|}_+\times \R_+ :  (\Lambda(\mu^\vartheta-\Lambda^\vartheta), \theta^\vartheta,1)\in \SS^2_+\;  \forall \vartheta\in\Theta, \quad \tau\geq \sum_{\vartheta\in\Theta} \Lambda^\vartheta\theta^\vartheta\right\}
    \end{equation*}
    is a conic lifting of $\E[T]$ under {\rm DSR} regime.
\end{theorem}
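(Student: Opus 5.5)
Following the proof of Proposition~\ref{prop:stab}, we show the two inclusions between the projection of the set onto $(\mu,\tau)$ and the epigraph of $\E[T]$ under DSR, namely
$$
\left\{(\mu,\tau):\ \mu^\vartheta>\Lambda^\vartheta\ \forall\vartheta\in\Theta,\ \tau\ge \sum_{\vartheta\in\Theta}\frac{\Lambda^\vartheta}{\Lambda(\mu^\vartheta-\Lambda^\vartheta)}\right\},
$$
with the right-hand side given by \eqref{eq:expected_dsr} of Theorem~\ref{lemma:dsr}, specialized to a single edge server with $\Lambda$ and $\Lambda^\vartheta$ constant. The key observation is that the membership $(\Lambda(\mu^\vartheta-\Lambda^\vartheta),\theta^\vartheta,1)\in\SS^2_+$ is, by the definition of the rotated quadratic cone, equivalent to
$$
\Lambda(\mu^\vartheta-\Lambda^\vartheta)\,\theta^\vartheta\ge 1,\qquad \mu^\vartheta-\Lambda^\vartheta\ge 0,\qquad \theta^\vartheta\ge 0 .
$$
Since $\Lambda>0$ and the product must be at least $1$, this forces $\mu^\vartheta>\Lambda^\vartheta$ strictly and $\theta^\vartheta\ge 1/(\Lambda(\mu^\vartheta-\Lambda^\vartheta))$. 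The constraint $\theta^\vartheta\ge1/(\Lambda(\mu^\vartheta-\Lambda^\vartheta))$ is the epigraph of a reciprocal of an affine function of $\mu^\vartheta$, restricted to the positive half-line.

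For the first inclusion, take $\mu$ with $\mu^\vartheta>\Lambda^\vartheta$ for all $\vartheta$ and $\tau\ge\E[T(\mu)]$. Set $\theta^\vartheta:=1/(\Lambda(\mu^\vartheta-\Lambda^\vartheta))\ge 0$. Each cone constraint then holds with equality in the product, and
$$
\sum_{\vartheta\in\Theta}\Lambda^\vartheta\theta^\vartheta=\E[T(\mu)]\le\tau .
$$
Nonnegativity of $\mu$ and $\tau$ is immediate, so $(\mu,\theta,\tau)$ lies in the lifted set.

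For the converse, take $(\mu,\theta,\tau)$ in the set. By the observation above, $\mu^\vartheta>\Lambda^\vartheta$, so stability condition \ref{a5a} holds, and $\theta^\vartheta\ge 1/(\Lambda(\mu^\vartheta-\Lambda^\vartheta))$. Because $\Lambda^\vartheta=\sum_i\xi_ip_i^\vartheta\ge0$, multiplying by $\Lambda^\vartheta$ and summing over $\vartheta\in\Theta$ gives
$$
\tau\ \ge\ \sum_{\vartheta\in\Theta}\Lambda^\vartheta\theta^\vartheta\ \ge\ \sum_{\vartheta\in\Theta}\frac{\Lambda^\vartheta}{\Lambda(\mu^\vartheta-\Lambda^\vartheta)}=\E[T(\mu)].
$$
Finally, the set is described by affine images of $(\mu,\theta,\tau)$ lying in $\SS^2_+$ together with a linear inequality and orthant constraints, so it is a conic lifting in the sense of Section~\ref{sec:MBC}.

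The only delicate point is the stability region. The cone constraint yields strict stability automatically, because the product must equal at least $1$ and so $\mu^\vartheta=\Lambda^\vartheta$ is excluded. However, the lifting by itself does not impose the safety margin $\varepsilon$ of \eqref{stab:dsr}, which must be added as a separate linear constraint. We will also remark that when $\Lambda^\vartheta=0$ the corresponding $\theta^\vartheta$ is unconstrained in the objective but still feasible, so nothing breaks.
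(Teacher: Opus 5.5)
Your proof is correct and follows the same route as the paper. For one inclusion you use the witness $\theta^\vartheta=1/(\Lambda(\mu^\vartheta-\Lambda^\vartheta))$, and for the other you use the chain $\E[T(\mu)]\le\sum_{\vartheta\in\Theta}\Lambda^\vartheta\theta^\vartheta\le\tau$, which follows from the cone constraints and then the linear constraint. You also make explicit several points the paper leaves implicit: the cone constraint with third entry $1$ forces strict stability, $\Lambda^\vartheta\ge 0$ is what allows summing the per-class inequalities, and the $\varepsilon$ margin must be imposed separately.
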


\begin{proof}
    Let $(\mu,\tau)\in \R^{|\Theta|}_+\times \R_+$ in the epigraph of $\E[T]$ under DSR regime. It means by Theorem \ref{lemma:dsr}
    \begin{equation*}
        \E[T(\mu)]= \frac{1}{\Lambda} \sum_{\vartheta\in\Theta} \frac{\Lambda^\vartheta}{\mu^\vartheta-\Lambda^\vartheta}\leq \tau,
    \end{equation*}
    then it follows that $\left(\mu, \left(\frac{1}{\Lambda(\mu^\vartheta-\Lambda^\vartheta)}\right)_{\vartheta\in\Theta},\tau\right)\in \R^{|\Theta|}_+\times\R^{|\Theta|}_+\times \R_+$ belongs to the conic lifting. Conversely, let $(\mu,\theta,\tau)\in \R^{|\Theta|}_+\times\R^{|\Theta|}_+\times \R_+$ in the conic lifting, thereby
    \begin{equation*}
        \E[T(\mu)]= \frac{1}{\Lambda} \sum_{\vartheta\in\Theta} \frac{\Lambda^\vartheta}{\mu^\vartheta-\Lambda^\vartheta} \leq \sum_{\vartheta\in\Theta} \Lambda^\vartheta\theta^\vartheta \leq \tau,
    \end{equation*}
    where the former inequality holds by $(\Lambda(\mu^\vartheta-\Lambda^\vartheta), \theta^\vartheta,1)\in \SS^2_+\;  \forall \vartheta\in\Theta$ and the latter does by $\tau\geq \sum_{\vartheta\in\Theta} \Lambda^\vartheta\theta^\vartheta$. Therefore, $(\mu,\tau)$ belongs to the epigraph of $\E[T]$ under DSR regime as it was claimed.
\end{proof}

Under ISR, all requests assigned to the edge server are processed within a single shared queueing system. Cache hits and cache misses correspond to different service requirements within the same server, so that the service-time distribution is a mixture of exponentials. Analogously to the above theorem, we can provide a conic lifting under an integrated framework.

\begin{theorem}\label{th:expectation_isr}
    The feasible set 
    \begin{align*}
    \bigg\{(\mu,\phi,\theta,\varrho,\tau)\in \R^{|\Theta|}_+\times\R^{|\Theta|}_+\times \R^{|\Theta|}_+\times \R^{|\Theta|}_+\times \R_+ : \; &  (\mu^\vartheta, \phi^\vartheta,1)\in \SS^2_+\;  \forall \vartheta\in\Theta, \\
    & \left(\Lambda\mu^\vartheta, \theta^\vartheta,1\right)\in \SS^2_+\;  \forall \vartheta\in\Theta, \\
    & \left(1-\sum_{\vartheta\in\Theta} \Lambda^\vartheta\phi^\vartheta, \varrho^\vartheta, \phi^\vartheta\right)\in \SS^2_+\; \forall \vartheta\in\Theta,\\
    & \tau \geq \sum_{\vartheta\in\Theta} \Lambda^\vartheta(\theta^\vartheta+\varrho^\vartheta) \Bigg\}
    \end{align*}
    is a conic lifting of $\E[T]$ under {\rm ISR} regime.
\end{theorem}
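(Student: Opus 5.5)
The plan is a two-inclusion argument, in the same style as the proofs of Proposition~\ref{prop:stab} and Theorem~\ref{th:expectation_dsr}. Throughout, write $\rho(\mu)=\sum_{\vartheta\in\Theta}\Lambda^\vartheta/\mu^\vartheta$. By Theorem~\ref{lemma:isr}, under ISR
$$
\E[T(\mu)]=\sum_{\vartheta\in\Theta}\frac{\Lambda^\vartheta}{\Lambda\mu^\vartheta}+\sum_{\vartheta\in\Theta}\Lambda^\vartheta\,\frac{(1/\mu^\vartheta)^2}{1-\rho(\mu)},
$$
and this expression is defined on the stability region $\rho(\mu)<1$.

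Reading the rotated cone as $xy\ge z^2$, $x,y\ge 0$, the three families of constraints say
$$
\phi^\vartheta\ge \frac{1}{\mu^\vartheta},\qquad \theta^\vartheta\ge\frac{1}{\Lambda\mu^\vartheta},\qquad \varrho^\vartheta\Big(1-\sum_{\vartheta'\in\Theta}\Lambda^{\vartheta'}\phi^{\vartheta'}\Big)\ge(\phi^\vartheta)^2,
$$
with $1-\sum_{\vartheta'\in\Theta}\Lambda^{\vartheta'}\phi^{\vartheta'}\ge 0$. So the first two families are reciprocal bounds and the third is a quadratic-over-linear bound.

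\textbf{Forward inclusion.} Take $(\mu,\tau)$ in the epigraph of $\E[T]$, so that $\rho(\mu)<1$. Set
$$
\phi^\vartheta=\frac{1}{\mu^\vartheta},\qquad \theta^\vartheta=\frac{1}{\Lambda\mu^\vartheta},\qquad \varrho^\vartheta=\frac{(1/\mu^\vartheta)^2}{1-\rho(\mu)}.
$$
Each of the three cone memberships then holds with equality, and every component is nonnegative. Moreover, $\sum_{\vartheta\in\Theta}\Lambda^\vartheta(\theta^\vartheta+\varrho^\vartheta)$ equals $\E[T(\mu)]\le\tau$ exactly, so the lifted point is feasible.

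\textbf{Converse inclusion.} Take a feasible $(\mu,\phi,\theta,\varrho,\tau)$ and proceed in three steps.
\begin{enumerate}
\item Since $\mu^\vartheta\phi^\vartheta\ge 1$, we have $\phi^\vartheta>0$ and $\phi^\vartheta\ge 1/\mu^\vartheta$. Hence $\rho(\mu)\le\sum_{\vartheta\in\Theta}\Lambda^\vartheta\phi^\vartheta=:s\le 1$.
\item Show $s<1$. If $s=1$, the third cone gives $(\phi^\vartheta)^2\le 0$, contradicting $\phi^\vartheta>0$. So $\rho(\mu)\le s<1$; the point is stable and $\E[T(\mu)]$ is well defined. (If $\varepsilon$-stability is intended, it is imposed separately via Proposition~\ref{prop:stab}.)
\item Chain the bounds. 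With $1-s>0$ we get $\varrho^\vartheta\ge(\phi^\vartheta)^2/(1-s)$. The map $(\phi,s)\mapsto\phi^2/(1-s)$ is nondecreasing in $\phi\ge 0$ and in $s<1$. Using $\phi^\vartheta\ge 1/\mu^\vartheta$ and $s\ge\rho(\mu)$, this gives
$$
\varrho^\vartheta\ge\frac{(1/\mu^\vartheta)^2}{1-\rho(\mu)}.
$$
Together with $\theta^\vartheta\ge 1/(\Lambda\mu^\vartheta)$, multiplying by $\Lambda^\vartheta\ge 0$ and summing yields $\E[T(\mu)]\le\sum_{\vartheta\in\Theta}\Lambda^\vartheta(\theta^\vartheta+\varrho^\vartheta)\le\tau$.
\end{enumerate}

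The main obstacle is step~3 of the converse. The denominator couples all classes through the lifted variables $\phi$ rather than the true reciprocals. The argument must therefore confirm that overestimating every $\phi^\vartheta$ moves both the numerator and the shared denominator in the direction that only increases the bound. It must also rule out the degenerate boundary $s=1$, which is exactly what step~2 does.
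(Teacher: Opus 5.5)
Your proof is correct and follows the paper's argument. The forward inclusion uses the same explicit lifting $\phi^\vartheta=1/\mu^\vartheta$, $\theta^\vartheta=1/(\Lambda\mu^\vartheta)$, $\varrho^\vartheta=(1/\mu^\vartheta)^2/(1-\rho(\mu))$, and the converse uses the same chain $\frac{(1/\mu^\vartheta)^2}{1-\rho(\mu)}\le\frac{(\phi^\vartheta)^2}{1-\sum_{\vartheta'\in\Theta}\Lambda^{\vartheta'}\phi^{\vartheta'}}\le\varrho^\vartheta$ together with $\theta^\vartheta\ge 1/(\Lambda\mu^\vartheta)$; your explicit exclusion of the boundary case $\sum_{\vartheta'\in\Theta}\Lambda^{\vartheta'}\phi^{\vartheta'}=1$ and your monotonicity justification of the first inequality are details the paper leaves implicit.
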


\begin{proof}
    Let $(\mu,\tau)\in \R^{|\Theta|}_+\times \R_+$ in the epigraph of $\E[T]$ under ISR regime. It means by Theorem \ref{lemma:isr}
    \begin{equation*}
        \E[T(\mu)]= \frac{1}{\Lambda} \sum_{\vartheta\in \Theta} \frac{\Lambda^\vartheta}{\mu^\vartheta}  +
\frac{ \sum_{\vartheta \in \Theta} \frac{\Lambda^\vartheta}{(\mu^\vartheta)^2}}{
1- \sum_{\vartheta\in \Theta} \frac{\Lambda^\vartheta}{\mu^\vartheta}}\leq \tau,
    \end{equation*}
    then it is not difficult to check that
    \begin{equation*}
        \left(\mu,  \left(\frac{1}{\mu^\vartheta}\right)_{\vartheta\in\Theta},
        \left(\frac{1}{\Lambda\mu^\vartheta}\right)_{\vartheta\in\Theta},\left(\frac{\frac{1}{(\mu^\vartheta)^2}}{1-\sum_{\vartheta\in\Theta}\frac{\Lambda^\vartheta}{\mu^\vartheta}}\right)_{\vartheta\in\Theta},\tau\right)\in\R^{|\Theta|}_+\times\R^{|\Theta|}_+\times \R^{|\Theta|}_+\times \R^{|\Theta|}_+\times \R_+
    \end{equation*}
    belongs to the conic lifting. Conversely, let $(\mu,\phi,\theta,\varrho,\tau)\in \R^{|\Theta|}_+\times\R^{|\Theta|}_+\times \R^{|\Theta|}_+\times \R^{|\Theta|}_+\times \R_+$ in the conic lifting, thereby
    \begin{equation}\label{eq:aux}
        \frac{\frac{1}{(\mu^\vartheta)^2}}{1-\sum_{\vartheta\in\Theta}\frac{\Lambda^\vartheta}{\mu^\vartheta}}\leq \frac{(\phi^\vartheta)^2}{1-\sum_{\vartheta\in\Theta}\Lambda^\vartheta\phi^\vartheta}\leq \varrho^\vartheta, \quad \forall \vartheta \in \Theta,
    \end{equation}
    where the former inequality holds by $(\mu^\vartheta, \phi^\vartheta,1)\in \SS^2_+\;  \forall \vartheta\in\Theta,$ and the former does by $\left(1-\sum_{\vartheta\in\Theta} \Lambda^\vartheta\phi^\vartheta, \varrho^\vartheta, \phi^\vartheta\right)\in \SS^2_+\; \forall \vartheta\in\Theta$. Finally,
    \begin{equation*}
        \E[T(\mu)]= \frac{1}{\Lambda} \sum_{\vartheta\in \Theta} \frac{\Lambda^\vartheta}{\mu^\vartheta}  +
\frac{ \sum_{\vartheta \in \Theta} \frac{\Lambda^\vartheta}{(\mu^\vartheta)^2}}{
1- \sum_{\vartheta\in \Theta} \frac{\Lambda^\vartheta}{\mu^\vartheta}} \leq \sum_{\vartheta\in\Theta} \Lambda^\vartheta(\theta^\vartheta+\varrho^\vartheta) \leq \tau,
    \end{equation*}
    where the former inequality holds by $\left(\Lambda\mu^\vartheta, \theta^\vartheta,1\right)\in \SS^2_+\;  \forall \vartheta\in\Theta$ plus \eqref{eq:aux}, and the latter does by $\tau \geq \sum_{\vartheta\in\Theta} \Lambda^\vartheta(\theta^\vartheta+\varrho^\vartheta)$. Therefore, $(\mu,\tau)$ belongs to the epigraph of $\E[T]$ under ISR regime as it was claimed.
\end{proof}

As a corollary of all the results above, we provide the following compact conic program (CP) for the single-server CDN design problem \eqref{1cdn} that is able to solve the three different paradigms discussed throughout this paper (UNC, DSR, and ISR).

\begin{align*}\label{eq:1-cdn-cp}\tag{\rm 1-CDN-CP}
    \minimize_{x\in\R^m, \mu\in \mathcal{C}} \quad & \Phi(r_i)_{i\in\I} && \\
    \st \quad & (a_i - x, D^{\frak a}_i) \in \LL_p^{m}, &&  \forall i \in \I, \\
    & (b - x, D^{\frak b}) \in \LL_p^{m},&& \\
    & \Lambda\rho \leq \Lambda^H D^{\frak b}, &&\\
    & \varphi^\vartheta \leq \begin{cases}
        \Lambda(\mu^\vartheta- \Lambda^\vartheta) & \text{Under DSR}\\
        \Lambda\mu^\vartheta & \text{Under ISR}
    \end{cases} && \forall \vartheta\in \Theta\\
    &(\varphi^\vartheta,\theta^\vartheta, 1) \in \SS^2_+, && \forall \vartheta\in \Theta\\
    \noalign{
    \noindent {\small Under DSR} \hdashrule[0.5ex]{.89\linewidth}{0.4pt}{1mm 1mm}
    }
    & \mu^\vartheta-\Lambda^\vartheta \geq \varepsilon, && \forall \vartheta\in\Theta\\
    & r_i\geq \kappa_1 D^{\frak a}_i+ \sum_{\vartheta\in\Theta} \Lambda^\vartheta\theta^\vartheta + \kappa_2(D^{\frak b}- \rho), & & \forall i\in\I,\\
    \noalign{
    \noindent {\small Under ISR} \hdashrule[0.5ex]{.9\linewidth}{0.4pt}{1mm 1mm}
    }
    & (\mu^\vartheta, \phi^\vartheta,1)\in \SS^2_+ &&  \forall \vartheta\in\Theta,\\
    & 1-\sum_{\vartheta\in\Theta} \Lambda^\vartheta\phi^\vartheta \geq \varepsilon, &&\\
    & \left(1-\sum_{\vartheta\in\Theta} \Lambda^\vartheta\phi^\vartheta, \varrho^\vartheta, \phi^\vartheta\right)\in \SS^2_+&& \forall \vartheta\in\Theta,\\
   & r_i\geq \kappa_1 D^{\frak a}_i+ \sum_{\vartheta\in\Theta} \Lambda^\vartheta(\theta^\vartheta+\varrho^\vartheta) + \kappa_2(D^{\frak b}- \rho), &&  \forall i\in\I,\\
   \noalign{
    \noindent {\small Auxiliary variables} \hdashrule[0.5ex]{.82\linewidth}{0.4pt}{1mm 1mm}
    }
   & \theta^\vartheta, \varphi^\vartheta,\phi^\vartheta, \varrho^\vartheta\geq 0, && \forall \vartheta\in\Theta,\\
   & D^\frak{b},\rho \geq 0 && \\
   & r_i, D_{i}^\frak{a}\geq 0, && \forall i\in\I.
\end{align*}

\begin{corollary}\label{coro:cp}
    \eqref{eq:1-cdn-cp} is an exact conic programming formulation for \eqref{1cdn}.
\end{corollary}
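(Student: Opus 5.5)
The plan is to show that the optimal values of \eqref{1cdn} and \eqref{eq:1-cdn-cp} coincide, and that optimal solutions map onto each other through their common variables $(x,\mu)$. The argument is a componentwise epigraph argument, using that $\Phi$ is nondecreasing. I will establish two inequalities.

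\emph{Direction 1: every feasible $(x,\mu)$ of \eqref{1cdn} yields a feasible point of \eqref{eq:1-cdn-cp} with the same objective value.} Take $(x,\mu)$ with $\mu\in\mathcal{C}_{\rm stab}$ and set
\[
D^{\frak a}_i=\D(a_i,x),\qquad D^{\frak b}=\D(x,b),\qquad \rho=\frac{\Lambda^H}{\Lambda}D^{\frak b}.
\]
For the conic constraints on $\mu$, use the lifting points built in the forward parts of the proofs of Proposition~\ref{prop:stab}, Theorem~\ref{th:expectation_dsr} and Theorem~\ref{th:expectation_isr}. Concretely, take $\varphi^\vartheta$ equal to its upper bound, $\theta^\vartheta=1/\varphi^\vartheta$, $\phi^\vartheta=1/\mu^\vartheta$, and $\varrho^\vartheta$ as in Theorem~\ref{th:expectation_isr}. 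Stability \eqref{stab:dsr} or \eqref{stab:isr} ensures these quantities are well defined and that the margin constraints with $\varepsilon$ hold. Finally set $r_i=\E[R_i(x,\mu)]$. With these values each right-hand side of the $r_i$ constraint equals $\E[R_i]$, because
\[
\kappa_2\bigl(D^{\frak b}-\rho\bigr)=\kappa_2\Bigl(1-\frac{\Lambda^H}{\Lambda}\Bigr)\D(x,b),
\]
and the $\theta,\varrho$ terms reproduce $\E[T(\mu)]$. This last point is checked directly against \eqref{eq:expected_dsr} and \eqref{eq:expected_isr}. It follows that the optimal value of \eqref{eq:1-cdn-cp} is at most that of \eqref{1cdn}.

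\emph{Direction 2: every feasible point of \eqref{eq:1-cdn-cp} has objective at least the value of \eqref{1cdn} at its $(x,\mu)$.} Take any feasible point.
\begin{itemize}
\item By Proposition~\ref{prop:distance}, $D^{\frak a}_i\ge \D(a_i,x)$ and $D^{\frak b}\ge\D(x,b)$.
\item The constraint $\Lambda\rho\le\Lambda^H D^{\frak b}$ gives $D^{\frak b}-\rho\ge (1-\Lambda^H/\Lambda)D^{\frak b}\ge(1-\Lambda^H/\Lambda)\D(x,b)$, where the last step uses $1-\Lambda^H/\Lambda\ge 0$.
\item Since $(\varphi^\vartheta,\theta^\vartheta,1)\in\SS^2_+$ and $\varphi^\vartheta$ is bounded above, we get $\theta^\vartheta\ge 1/\varphi^\vartheta\ge 1/(\text{bound})$. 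This reproduces the constraints of Theorems~\ref{th:expectation_dsr} and~\ref{th:expectation_isr}, whose converse parts give $\sum_\vartheta\Lambda^\vartheta\theta^\vartheta\ge\E[T(\mu)]$ under DSR and $\sum_\vartheta\Lambda^\vartheta(\theta^\vartheta+\varrho^\vartheta)\ge\E[T(\mu)]$ under ISR.
\item The $\varepsilon$-constraints, together with Proposition~\ref{prop:stab}, give $\mu\in\mathcal{C}_{\rm stab}$.
\end{itemize}
Together these yield $r_i\ge\E[R_i(x,\mu)]$ for every $i$. Monotonicity of $\Phi$ then gives $\Phi(r)\ge\Phi(\E[R_i(x,\mu)])_{i\in\I}$, so the optimal value of \eqref{1cdn} is at most that of \eqref{eq:1-cdn-cp}. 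The UNC case is the same argument with the congestion terms dropped.

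The main obstacle is the bookkeeping between the auxiliary variable $\varphi^\vartheta$ and the earlier theorems. The earlier liftings place the affine expression $\Lambda(\mu^\vartheta-\Lambda^\vartheta)$, or $\Lambda\mu^\vartheta$, directly in the cone, whereas here only an upper bound on $\varphi^\vartheta$ appears. I expect to handle this with the monotonicity of the rotated cone in its first argument: if $(\varphi,\theta,1)\in\SS^2_+$ and $\varphi\le\psi$, then $(\psi,\theta,1)\in\SS^2_+$. A second delicate point is the strictness of stability versus the $\varepsilon$-margins. I will interpret $\mathcal{C}_{\rm stab}$ via \eqref{stab:dsr} and \eqref{stab:isr} with margin $\varepsilon$, so that both formulations share the same $\mu$-feasible set. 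A last detail to confirm is that attainment is preserved, so that the correspondence is between optimal solutions and not only optimal values; the explicit construction in Direction 1 handles this.
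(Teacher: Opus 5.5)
Your proposal is correct and follows the paper's route. The paper also rewrites \eqref{1cdn} in epigraph form with auxiliary variables $D^{\frak a}_i$, $D^{\frak b}$, $\rho$, $\tau$, $r_i$, and then substitutes the liftings of Proposition~\ref{prop:stab}, Proposition~\ref{prop:distance} and Theorems~\ref{th:expectation_dsr} and~\ref{th:expectation_isr}. Your two-direction argument spells out steps the paper leaves implicit: the monotonicity of $\Phi$, the treatment of $\rho$ through $\Lambda\rho\le\Lambda^H D^{\frak b}$, and the passage from $\varphi^\vartheta$ to the affine cone argument via monotonicity of $\SS^2_+$ in its first entry.
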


\begin{proof}
   Consider the following formulation for \eqref{1cdn}:
    \begin{align}
    \minimize_{x\in\R^m, \mu\in \mathcal{C}} \quad & \Phi(r_i)_{i\in\I}\nonumber\\
    \st \quad & D^{\frak a}_i \geq \D(a_i,x), \quad \forall i\in\I \label{eq:ctr1}\\
    & D^{\frak b}\geq \D(x,b)\label{eq:ctr2}\\
    & \Lambda\rho \leq \Lambda^H D^{\frak b},\nonumber\\
    & \tau\geq \E[T(\mu)]\label{eq:ctr3}\\
    & r_i\geq \kappa_1 D^{\frak a}_i+ \tau + \kappa_2 \left(D^{\frak b}-\rho\right) , \quad  \forall i\in\I,\nonumber\\
    & \begin{cases}
       \eqref{stab:dsr} & \text{Under DSR}\\
        \eqref{stab:isr} & \text{Under ISR}
    \end{cases}\nonumber\\
    & D^{\frak a}_i, r_i\geq 0, \quad \forall i\in \I\nonumber\\
    & D^{\frak b}, \tau\geq 0.\nonumber
\end{align}

The claim follows from applying Proposition \ref{prop:stab} to \eqref{stab:isr}, Proposition \ref{prop:distance} to \eqref{eq:ctr1}-\eqref{eq:ctr2}, and Theorems \ref{th:expectation_dsr} and \ref{th:expectation_isr} to \eqref{eq:ctr3} depending on the congested regime.
\end{proof}

\begin{remark}
\eqref{1cdn} can be further formulated as a complexity-optimal second-order cone program {\rm (SOCP)}~\citep{blanco2024minimal}, and can therefore be solved efficiently in polynomial time using interior-point methods~\citep[see, e.g.,][]{BoydVandenberghe2004}.
\end{remark}

Observe also that the original problem \eqref{1cdn} is only constrained in the service rates, $\mu$, through the stability condition and the bounded region $\mathcal{C}$. In case this region is induced by the efficiency-oriented bound \eqref{budget}, one can derive necessary and sufficient conditions for the feasibility of the problem above, as stated in the following result.

\begin{proposition}\label{prop:budget_feasibility}
Let $\mathcal C$ be defined as in \eqref{budget}. Then the following statements hold.

\begin{enumerate}[label=\roman*), ref=\roman*]
\item Under {\rm DSR} regime, \eqref{1cdn} is feasible if and only if $
\Gamma \;\ge\;
\Gamma_{\min}^{\mathrm{DSR}}
:=
\sum_{\vartheta\in \Theta} c^\vartheta \big(\Lambda^\vartheta+\varepsilon\big)$.

\item Under {\rm ISR} regime, \eqref{1cdn} is feasible if and only if $
\Gamma \;\ge\;
\Gamma_{\min}^{\mathrm{ISR}}
:= \frac{1}{1-\varepsilon} \left(\sum_{\vartheta\in \Theta} \sqrt{c^\vartheta \Lambda^\vartheta}\right)^2$.
\end{enumerate}
\end{proposition}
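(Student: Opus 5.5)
Feasibility of \eqref{1cdn} depends only on whether $\mathcal{C}_{\rm stab}=\mathcal{C}\cap\{\text{stability}\}$ is nonempty. The location variable $x$ is unconstrained, and the objective is finite whenever $\mu$ is stable. So in both regimes I would reduce the question to a one-line optimization: minimize the budget usage $\sum_\vartheta c^\vartheta\mu^\vartheta$ over the stability region, and show that the minimum is exactly the stated $\Gamma_{\min}$. Feasibility then holds iff $\Gamma$ is at least this minimum, provided the minimum is attained. Attainment is what gives the non-strict inequality ``$\ge$'' in the statement.

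\emph{DSR.} The stability region \eqref{stab:dsr} is the box $\mu^\vartheta\ge\Lambda^\vartheta+\varepsilon$ for all $\vartheta\in\Theta$. Since every $c^\vartheta>0$, the linear cost is minimized componentwise at $\mu^\vartheta=\Lambda^\vartheta+\varepsilon$, which gives $\Gamma^{\rm DSR}_{\min}$. For necessity, any feasible $\mu$ satisfies $\Gamma\ge\sum c^\vartheta\mu^\vartheta\ge\Gamma^{\rm DSR}_{\min}$. For sufficiency, the minimizer itself lies in $\mathcal{C}$ and is nonnegative.

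\emph{ISR.} Here I minimize $\sum c^\vartheta\mu^\vartheta$ subject to $\sum\Lambda^\vartheta/\mu^\vartheta\le 1-\varepsilon$ with $\mu>0$. The key tool is Cauchy--Schwarz:
$$\Big(\sum_\vartheta\sqrt{c^\vartheta\Lambda^\vartheta}\Big)^2=\Big(\sum_\vartheta\sqrt{c^\vartheta\mu^\vartheta}\sqrt{\Lambda^\vartheta/\mu^\vartheta}\Big)^2\le\Big(\sum_\vartheta c^\vartheta\mu^\vartheta\Big)\Big(\sum_\vartheta\Lambda^\vartheta/\mu^\vartheta\Big)\le(1-\varepsilon)\sum_\vartheta c^\vartheta\mu^\vartheta.$$
This yields necessity: any feasible $\mu$ has $\Gamma\ge\sum c^\vartheta\mu^\vartheta\ge\Gamma^{\rm ISR}_{\min}$. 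For sufficiency I take the equality case of Cauchy--Schwarz with the stability constraint tight. That is, set $\mu^\vartheta=t\sqrt{\Lambda^\vartheta/c^\vartheta}$ with $t=\frac{1}{1-\varepsilon}\sum_{\vartheta'}\sqrt{c^{\vartheta'}\Lambda^{\vartheta'}}$. A direct substitution then gives $\sum\Lambda^\vartheta/\mu^\vartheta=1-\varepsilon$ and $\sum c^\vartheta\mu^\vartheta=\Gamma^{\rm ISR}_{\min}\le\Gamma$.

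The main obstacle is the degenerate classes with $\Lambda^\vartheta=0$, since the ISR stability expression involves $\Lambda^\vartheta/\mu^\vartheta$. For such classes the construction above would set $\mu^\vartheta=0$, which is problematic. Formally the term vanishes, but if $\mu^\vartheta>0$ is required, one cannot simply put $\mu^\vartheta=0$. I would handle this by adopting the convention $0/0=0$, consistent with the lifting of Proposition~\ref{prop:stab}, where $\phi^\vartheta$ is free to be large while $\Lambda^\vartheta\phi^\vartheta=0$. Alternatively, I would note that such classes contribute nothing to either side of the bound, so they can be dropped from $\Theta$ before applying the argument above. I would also remark that, by Proposition~\ref{prop:stab}, the same conclusion holds for the lifted formulation \eqref{eq:1-cdn-cp}.
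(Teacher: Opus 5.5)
Your argument matches the paper's proof essentially step for step: for DSR, the componentwise bound $\mu^\vartheta\ge\Lambda^\vartheta+\varepsilon$ with the witness $\mu^\vartheta=\Lambda^\vartheta+\varepsilon$; for ISR, the same Cauchy--Schwarz pairing of $(\sqrt{c^\vartheta\mu^\vartheta})_\vartheta$ with $(\sqrt{\Lambda^\vartheta/\mu^\vartheta})_\vartheta$ and the same equality-case witness $\mu^\vartheta=\alpha\sqrt{\Lambda^\vartheta/c^\vartheta}$, where $\alpha=\frac{1}{1-\varepsilon}\sum_{\vartheta}\sqrt{c^\vartheta\Lambda^\vartheta}$. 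Your treatment of classes with $\Lambda^\vartheta=0$ goes beyond the paper, which ignores them, but the lifting of Proposition~\ref{prop:stab} is not consistent with the convention $\mu^\vartheta=0$: the constraint $(\mu^\vartheta,\phi^\vartheta,1)\in\SS^2_+$ forces $\mu^\vartheta\phi^\vartheta\ge 1$ and hence $\mu^\vartheta>0$, so in that degenerate case $\Gamma_{\min}^{\mathrm{ISR}}$ is only an infimum for \eqref{eq:1-cdn-cp}, and your closing remark about the lifted formulation fails at $\Gamma=\Gamma_{\min}^{\mathrm{ISR}}$.
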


\begin{proof}
We prove necessity and sufficiency in each case.

\medskip

\begin{enumerate}[label=\roman*), ref=\roman*]
    \item Feasibility of \eqref{1cdn} requires 
    $$
\mu^\vartheta \ge \Lambda^\vartheta+\varepsilon
\; \forall \vartheta\in\Theta, \quad \text{ and }\quad
\sum_{\vartheta\in\Theta} c^\vartheta \mu^\vartheta \le \Gamma.
$$
Hence, for every feasible vector $(\mu^\vartheta)_{\vartheta\in\Theta}$, one has  $\sum_{\vartheta\in\Theta} c^\vartheta \mu^\vartheta
\;\ge\;
\sum_{\vartheta\in\Theta} c^\vartheta(\Lambda^\vartheta+\varepsilon)$. Gathering both inequalities yields the desired expression which proves necessity.

Conversely, assume that $\Gamma \ge \Gamma_{\rm min}^{\rm DSR}$, the fact $\Lambda^\vartheta+\varepsilon\in \mathcal{C}_{\rm stab}$ proves sufficiency.

\item 
Feasibility of \eqref{1cdn} requires
$$
\sum_{\vartheta\in\Theta}\frac{\Lambda^\vartheta}{\mu^\vartheta}\le 1-\varepsilon \quad \text{ and }\quad \sum_{\vartheta\in\Theta} c^\vartheta \mu^\vartheta \le \Gamma.
$$

Assume first that $(\mu^\vartheta)_{\vartheta\in\Theta}$ is feasible. Applying the Cauchy--Schwarz inequality to the vectors
$$
\left(\sqrt{c^\vartheta\mu^\vartheta}\right)_{\vartheta\in\Theta}
\qquad\text{and}\qquad
\left(\sqrt{\frac{\Lambda^\vartheta}{\mu^\vartheta}}\right)_{\vartheta\in\Theta},
$$
we obtain
$$
\left(\sum_{\vartheta\in\Theta}\sqrt{c^\vartheta\Lambda^\vartheta}\right)^2
\le
\left(\sum_{\vartheta\in\Theta} c^\vartheta\mu^\vartheta\right)
\left(\sum_{\vartheta\in\Theta}\frac{\Lambda^\vartheta}{\mu^\vartheta}\right).
$$
Using ISR stability condition, it follows that
$$
\sum_{\vartheta\in\Theta} c^\vartheta\mu^\vartheta
\ge
\frac{1}{1-\varepsilon}
\left(\sum_{\vartheta\in\Theta}\sqrt{c^\vartheta\Lambda^\vartheta}\right)^2.
$$
Since feasibility also requires $\sum_{\vartheta\in\Theta} c^\vartheta\mu^\vartheta\le\Gamma$.

Assume now that $\Gamma
\ge \Gamma_{\min}^{\rm ISR}$. Set $
\alpha:=
\frac{1}{1-\varepsilon}
\sum_{\vartheta\in\Theta}\sqrt{c^\vartheta\Lambda^\vartheta}$, 
and define
$$
\mu^\vartheta:=\alpha\sqrt{\frac{\Lambda^\vartheta}{c^\vartheta}},
\qquad \forall \vartheta\in\Theta,
$$
which are well-defined by the positivity of the costs. Then
$$
\frac{\Lambda^\vartheta}{\mu^\vartheta}
=
\frac{\sqrt{c^\vartheta\Lambda^\vartheta}}{\alpha},
\qquad \forall \vartheta\in\Theta,
$$
and summing over \(\Theta\) gives
$$
\sum_{\vartheta\in\Theta}\frac{\Lambda^\vartheta}{\mu^\vartheta}
=
\frac{1}{\alpha}\sum_{\vartheta\in\Theta}\sqrt{c^\vartheta\Lambda^\vartheta}
=
1-\varepsilon.
$$
Thus ISR stability condition holds with equality. Moreover,
$$
\sum_{\vartheta\in\Theta} c^\vartheta\mu^\vartheta
=
\alpha\sum_{\vartheta\in\Theta}\sqrt{c^\vartheta\Lambda^\vartheta}
=
\frac{1}{1-\varepsilon}
\left(\sum_{\vartheta\in\Theta}\sqrt{c^\vartheta\Lambda^\vartheta}\right)^2
\le \Gamma.
$$
Hence $(\mu^\vartheta)_{\vartheta\in\Theta}\in\mathcal C$, and therefore \eqref{1cdn} is feasible. This proves sufficiency.

\end{enumerate}
\end{proof}

In Figure~\ref{1server:ex} we illustrate the solutions obtained for the same set of demand points under the three discussed regimes. Although all formulations share a common distance-based component, they differ in the way congestion is modeled and penalized. As a consequence, the optimal location of the single edge server is influenced by the relative importance of transportation and congestion effects, which leads to different spatial trade-offs between proximity to the demand points and proximity to the origin server.

In the uncongested system (UNC), where congestion effects are negligible, the problem reduces to a purely distance-based objective. Consequently, the optimal location corresponds to a weighted geometric median (Weber-type point), and the server is positioned in a balanced location with respect to both the demand points and the origin server.

In contrast, under DSR dynamic, congestion effects are modeled separately for each traffic class, and the component associated with misses induces a stronger dependence on the origin server. This results in an optimal solution in which the edge server is located closer to the origin server in order to reduce the impact of miss-related costs.

Finally, under ISR dynamic, congestion is modeled through a shared service mechanism that couples all traffic classes. This leads to a more balanced trade-off between distance and congestion, and the resulting location lies between the demand region and the origin server, reflecting the joint influence of both components.

\begin{figure}
    \centering
    \fbox{\begin{tikzpicture}[
    >=Latex,
    scale=0.85,
    demand/.style    = {draw,fill=blue!30,circle,minimum size=1.0mm,inner sep=0pt},
    originbox/.style = {draw,circle,fill=green!10,minimum size=2mm,inner sep=0pt},
    facility/.style  = {draw,font=\tiny,fill=purple!10,rounded corners=2pt,minimum width=2.0mm,minimum height=2.0mm,inner sep=0pt},
    demandlink/.style = {thin,black!55,opacity=0.55},
    originlink/.style = {line width=0.8pt,dotted,red!70!black,opacity=0.7},
    every node/.style = {align=center}
]
  \draw[demandlink] (-2.24592,-1.53783) -- (0.46535,-1.60264);
  \draw[demandlink] (-2.80217,-3.99732) -- (0.46535,-1.60264);
  \draw[demandlink] (-1.82955,-1.54249) -- (0.46535,-1.60264);
  \draw[demandlink] (-2.75207,-0.68859) -- (0.46535,-1.60264);
  \draw[demandlink] (-1.00381,-4.16665) -- (0.46535,-1.60264);
  \draw[demandlink] (-3.95998,-4.15390) -- (0.46535,-1.60264);
  \draw[demandlink] (0.56995,0.12194) -- (0.46535,-1.60264);
  \draw[demandlink] (-1.11891,-4.51576) -- (0.46535,-1.60264);
  \draw[originlink] (0.46535,-1.60264) -- (0.51456,-1.58256);
  \node[demand] at (-2.24592,-1.53783) {};
  \node[demand] at (-2.80217,-3.99732) {};
  \node[demand] at (-1.82955,-1.54249) {};
  \node[demand] at (-2.75207,-0.68859) {};
  \node[demand] at (-1.00381,-4.16665) {};
  \node[demand] at (-3.95998,-4.15390) {};
  \node[demand] at (0.56995,0.12194) {};
  \node[demand] at (-1.11891,-4.51576) {};
    \node[facility] at (0.46535,-1.60264) {};
  \node[originbox] at (0.51456,-1.58256) {};
\end{tikzpicture}}~\fbox{\begin{tikzpicture}[
    >=Latex,
    scale=0.85,
    demand/.style    = {draw,fill=blue!30,circle,minimum size=1.0mm,inner sep=0pt},
    originbox/.style = {draw,circle,fill=green!10,minimum size=2mm,inner sep=0pt},
    facility/.style  = {draw,font=\tiny,fill=purple!10,rounded corners=2pt,minimum width=2.0mm,minimum height=2.0mm,inner sep=0pt},
    demandlink/.style = {thin,black!55,opacity=0.55},
    originlink/.style = {line width=0.8pt,dotted,red!70!black,opacity=0.7},
    every node/.style = {align=center}
]
  \draw[demandlink] (-2.24592,-1.53783) -- (0.02435,-1.78233);
  \draw[demandlink] (-2.80217,-3.99732) -- (0.02435,-1.78233);
  \draw[demandlink] (-1.82955,-1.54249) -- (0.02435,-1.78233);
  \draw[demandlink] (-2.75207,-0.68859) -- (0.02435,-1.78233);
  \draw[demandlink] (-1.00381,-4.16665) -- (0.02435,-1.78233);
  \draw[demandlink] (-3.95998,-4.15390) -- (0.02435,-1.78233);
  \draw[demandlink] (0.56995,0.12194) -- (0.02435,-1.78233);
  \draw[demandlink] (-1.11891,-4.51576) -- (0.02435,-1.78233);
  \draw[originlink] (0.02435,-1.78233) -- (0.51456,-1.58256);
  \node[demand] at (-2.24592,-1.53783) {};
  \node[demand] at (-2.80217,-3.99732) {};
  \node[demand] at (-1.82955,-1.54249) {};
  \node[demand] at (-2.75207,-0.68859) {};
  \node[demand] at (-1.00381,-4.16665) {};
  \node[demand] at (-3.95998,-4.15390) {};
  \node[demand] at (0.56995,0.12194) {};
  \node[demand] at (-1.11891,-4.51576) {};
  \node[originbox] at (0.51456,-1.58256) {};
  \node[facility] at (0.02435,-1.78233) {};
\end{tikzpicture}}~\fbox{\begin{tikzpicture}[
    >=Latex,
    scale=0.85,
    demand/.style    = {draw,fill=blue!30,circle,minimum size=1.0mm,inner sep=0pt},
    originbox/.style = {draw,circle,fill=green!10,minimum size=2mm,inner sep=0pt},
    facility/.style  = {draw,font=\tiny,fill=purple!10,rounded corners=2pt,minimum width=2.0mm,minimum height=2.0mm,inner sep=0pt},
    demandlink/.style = {thin,black!55,opacity=0.55},
    originlink/.style = {line width=0.8pt,dotted,red!70!black,opacity=0.7},
    every node/.style = {align=center}
]
  \draw[demandlink] (-2.24592,-1.53783) -- (-1.72963,-2.05581);
  \draw[demandlink] (-2.80217,-3.99732) -- (-1.72963,-2.05581);
  \draw[demandlink] (-1.82955,-1.54249) -- (-1.72963,-2.05581);
  \draw[demandlink] (-2.75207,-0.68859) -- (-1.72963,-2.05581);
  \draw[demandlink] (-1.00381,-4.16665) -- (-1.72963,-2.05581);
  \draw[demandlink] (-3.95998,-4.15390) -- (-1.72963,-2.05581);
  \draw[demandlink] (0.56995,0.12194) -- (-1.72963,-2.05581);
  \draw[demandlink] (-1.11891,-4.51576) -- (-1.72963,-2.05581);
  \draw[originlink] (-1.72963,-2.05581) -- (0.51456,-1.58256);
  \node[demand] at (-2.24592,-1.53783) {};
  \node[demand] at (-2.80217,-3.99732) {};
  \node[demand] at (-1.82955,-1.54249) {};
  \node[demand] at (-2.75207,-0.68859) {};
  \node[demand] at (-1.00381,-4.16665) {};
  \node[demand] at (-3.95998,-4.15390) {};
  \node[demand] at (0.56995,0.12194) {};
  \node[demand] at (-1.11891,-4.51576) {};
  \node[originbox] at (0.51456,-1.58256) {};
  \node[facility] at (-1.72963,-2.05581) {};
\end{tikzpicture}}
    \caption{Solutions under the three different regimes (from left to right: DSR, ISR, and UNC).\label{1server:ex}}
\end{figure}

\section{General Discrete Conic Optimization Framework} \label{sec:conicOPT}

This section is due to present the whole optimization framework used to design the hierarchical service architecture under a general framework for the CDN with several edge and origin servers. In this general case, the model should jointly determine the placement of a given number, $n$, of edge servers, the assignment of demand points to edge servers, the routing of edge servers to origin servers, and the different service capacities installed at each edge node. 

The decision variables in the general case describe three fundamental aspects of the system design: 
(i) the allocation of traffic between network components, 
(ii) the physical placement of edge infrastructure, and 
(iii) the processing capacity installed at each service node.

\subsection*{Design Variables}

We introduce the variables that characterize the structure and operation of the service system. They describe how demand is routed through the network, where edge servers are located in the physical space, and the service capacities installed at each edge server for handling different types of requests.\\

\noindent {\bf Placement of edge servers.} Edge servers can be located anywhere in a continuous geographical region. Their spatial positions influence the latency experienced by users and therefore affect the assignment of demand points. For each $j \in \J$:
$$
x_j \in \mathbb{R}^m: 
\;
\text{Location coordinates of edge server $j$ in the $m$-dimensional space}.
$$
\noindent {\bf Assignment of demand points to edge servers.} Each demand point represents a geographical location generating user requests (e.g., a population centroid or a traffic aggregation node). Binary variables indicate which edge server is responsible for processing the requests generated at each demand point.
\begin{equation}\label{eq:z-alloc}
z_{ij} =
\begin{cases}
1 & \text{if demand point $i$ is served by edge server $j$},\\
0 & \text{otherwise},
\end{cases}
\qquad \forall i \in \I,\; j \in \J.
\end{equation}
These variables determine how user traffic is distributed across the edge infrastructure.\\

\noindent {\bf Assignment of edge servers to origin servers.} When a request of type $\vartheta \in \Theta\backslash\{H\}$ cannot be processed locally at edge server $j$, it must be forwarded to an origin server $k$ providing the corresponding service capacity. To model this, we introduce the following binary variables:
\begin{equation}\label{eq:y-alloc}
y_{jk} =
\begin{cases}
1 & \text{if edge server $j$ is assigned to origin server $k$},\\
0 & \text{otherwise},
\end{cases}
\qquad \forall j \in \J,\; k \in \K.
\end{equation}
These variables define the upstream routing decisions, thereby linking edge servers to the service capacities available at the origin servers.\\

\noindent {\bf Service capacities at edge servers.} Each edge server processes requests that may correspond to cache hits or cache misses. Depending on the adopted queueing architecture, these streams can either be served independently (DSR) or jointly through a shared service mechanism (ISR).

To capture both situations in a unified framework, we define the service capacities at each edge server $j \in \J$ as
$$
\mu_j^\vartheta \ge 0, \qquad \forall \vartheta \in \Theta.
$$

Under DSR, the different types of cache are processed in independent service subsystems, each with its own capacity $\mu_j^\vartheta$, for all $\vartheta \in \Theta$.

Under ISR, both types of requests are processed within a single shared queue. In this case, $\{\mu_j^\vartheta\}_{\vartheta \in \Theta}$  represents effective service parameters that characterize the service-time mixture rather than physically separated capacities.

Thus, these variables represent the processing capability installed at each edge node under a general modeling perspective that encompasses both regimes.

We assume that the service capacities belong to a bounded region $\mathcal{C} \subseteq \R^{|\Theta|\times |\J|}$ which, analogously to \eqref{1cdn}, can represent a budget constraint. In particular, we consider
$$
\mathcal{C} = \left\{(\mu_j^\vartheta)_{j\in \J,\, \vartheta\in \Theta} \in \R^{|\Theta|\times |\J|}_+ :
\sum_{j\in \J} \sum_{\vartheta\in \Theta} c_j^\vartheta \mu_j^\vartheta \leq \Gamma \right\},
$$
where $c_j^\vartheta >0$ denotes the unit cost associated with the service capacity of type $\vartheta$ at edge server $j$, and $\Gamma>0$ represents the total budget available for capacity installation.

\subsection*{Constraints}

The constraints of the model describe how the system operates and ensure that the resulting design is both structurally consistent and operationally feasible. From an engineering standpoint, these constraints enforce three key requirements. First, user traffic must be routed through the network in a consistent hierarchical manner. Second, the processing capacity installed at each edge server must be sufficient to guarantee stable queueing operation. Third, the geometric distances that determine latency and routing costs must be computed consistently with the placement of the infrastructure.

In order to ensure assumption \ref{a1}, we need to impose single allocation constraints between demand points and edge servers, and between the latter and origin servers.\\

\noindent{\bf Single allocations.} The system follows a hierarchical architecture. Each demand point sends its requests to exactly one edge server, and each edge server forwards cache-miss requests to exactly one origin server. This reflects a typical operational configuration in which routing policies avoid traffic splitting in order to simplify system management and ensure predictable service behavior.

These requirements are enforced through the following assignment constraints:

\begin{align}
    \sum_{j \in \J} z_{ij} = 1, 
    & \qquad \forall i \in \I, \label{assign:edge}\tag{assign$^{\frak a}$}\\
    \sum_{k \in \K} y_{jk} = 1, 
    & \qquad \forall j \in \J.\label{assign:origin}\tag{assign$^{\frak b}$}
\end{align}

The first set of constraints guarantees that the traffic generated at each demand point $i$ is entirely handled by a single edge server. The second set ensures that every edge server is connected to exactly one origin server for processing cache-miss requests.

\begin{theorem}\label{th:general_cdn}
    The following mixed-integer nonlinear programming {\rm (MINLP)} model is an exact formulation for the content delivery network design problem under uncongested, {\rm DSR}, and {\rm ISR} dynamics,

\begin{align*}\label{eq:cnd}\tag{\rm CDN-MICP}
    \minimize_{x\in\R^{n\times m}, \; \mu \in \mathcal{C}} \quad & \Phi(r_i)_{i\in\I} && \\
    \st \quad & \eqref{assign:edge}, \eqref{assign:origin}, &&\\
    & ((a_i - x_j)\highlight{z_{ij}}, D^{\frak a}_i) \in \LL_p^{m}, &&  \forall i \in \I,j\in\J, \\
    & ((b_k - x_j)\highlight{y_{jk}}, D_j^{\frak b}) \in \LL_p^{m}, && \forall j \in \J,k\in\K,\\
    & \sum_{i\in\I}\xi_i\highlight{z_{ij}\rho_j} \leq  \sum_{i\in\I}p_i^H\xi_i\highlight{z_{ij}D_j^{\frak b}}, && \forall j\in \J,\\
    & \varphi^\vartheta_j \leq \begin{cases}
         \displaystyle\sum_{i\in\I}\xi_i\highlight{z_{ij}\mu_j^\vartheta}-  \displaystyle\sum_{i,i'\in\I}p_i^\vartheta\xi_i\xi_{i'}\highlight{z_{ij}z_{i'j}} & \text{Under {\rm DSR}}\\
         \displaystyle\sum_{i\in\I}\xi_i\highlight{z_{ij}\mu_j^\vartheta} & \text{Under {\rm ISR}}
    \end{cases} && \forall \vartheta\in \Theta, j\in\J,\\
    &(\varphi_j^\vartheta,\theta_j^\vartheta, 1) \in \SS^2_+, && \forall \vartheta\in \Theta,j\in \J,\\
    \noalign{
    \noindent {\small Under {\rm DSR}} \hdashrule[0.5ex]{.89\linewidth}{0.4pt}{1mm 1mm}
    }
    & \mu_j^\vartheta-\sum_{i\in\I}p_i^\vartheta\xi_iz_{ij} \geq \varepsilon, && \forall \vartheta\in\Theta, j\in \J\\
    & r_i\geq \kappa_1 D^{\frak a}_i+ \left[\sum_{\vartheta\in\Theta} \sum_{i\in\I}p_i^\vartheta\xi_i\highlight{z_{ij}\theta_j^\vartheta} + \kappa_2(D^{\frak b}_j-\rho_j)\right]\highlight{z_{ij}}, & & \forall i\in\I,j\in \J,\\
    \noalign{
    \noindent {\small Under {\rm ISR}} \hdashrule[0.5ex]{.9\linewidth}{0.4pt}{1mm 1mm}
    }
    & (\mu_j^\vartheta, \phi_j^\vartheta,1)\in \SS^2_+ &&  \forall \vartheta\in\Theta,j\in \J,\\
    & 1-\sum_{\vartheta\in\Theta} \sum_{i\in\I}p_i^\vartheta\xi_i\highlight{z_{ij}\phi_j^\vartheta} \geq \varepsilon, && \forall j\in\J,\\
    & \left(1-\sum_{\vartheta\in\Theta}\sum_{i\in\I}p_i^\vartheta\xi_i\highlight{z_{ij}\phi_j^\vartheta}, \varrho_j^\vartheta, \phi_j^\vartheta\right)\in \SS^2_+&& \forall \vartheta\in\Theta,j\in \J,\\
   & r_i\geq \kappa_1 D^{\frak a}_i+ \left[\sum_{\vartheta\in\Theta} \sum_{i\in\I}p_i^\vartheta\xi_i\highlight{z_{ij}(\theta_j^\vartheta+\varrho_j^\vartheta)} + \kappa_2(D^{\frak b}_j-\rho_j)\right]\highlight{z_{ij}}, &&  \forall i\in\I,j\in \J,\\
   \noalign{
    \noindent {\small Binary decisions} \hdashrule[0.5ex]{.83\linewidth}{0.4pt}{1mm 1mm}
    }
    & z_{ij} \in \{0,1\}, && \forall i \in \I, j \in \J,\\
   & y_{jk} \in \{0,1\}, && \forall j \in \J, k \in \K,\\
   \noalign{
    \noindent {\small Auxiliary variables} \hdashrule[0.5ex]{.83\linewidth}{0.4pt}{1mm 1mm}
    }
   & \theta_j^\vartheta, \varphi_j^\vartheta,\phi_j^\vartheta, \varrho_j^\vartheta\geq 0, && \forall \vartheta\in\Theta,j\in \J,\\
   & D_j^\frak{b},\rho_j \geq 0 && \forall j\in\J, \\
   & r_i, D_{i}^\frak{a}\geq 0, && \forall i\in\I.
\end{align*}
\end{theorem}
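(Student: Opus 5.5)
The plan is to fix an arbitrary binary pair $(z,y)$ satisfying \eqref{assign:edge}--\eqref{assign:origin}. This determines the allocation sets $\I_j=\{i: z_{ij}=1\}$ and the origin $k(j)$ with $y_{jk(j)}=1$. Once the binaries are fixed, I will show that \eqref{eq:cnd} collapses, server by server, into a collection of single-server problems of the type \eqref{eq:1-cdn-cp}, coupled only through $\Phi$ and the budget set $\mathcal C$. Exactness then follows from Corollary~\ref{coro:cp}, applied with the aggregated rates of Lemma~\ref{lemma:arrivals} and the expected response time of Theorem~\ref{thm:cdn_service_regimes}. Minimizing over all feasible $(z,y)$ finally recovers the full design problem.

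\textbf{Step 1: the distance constraints.} Fix $i$ and $j$. If $z_{ij}=0$, the cone constraint $((a_i-x_j)z_{ij},D^{\frak a}_i)\in\LL^m_p$ reduces to $D^{\frak a}_i\ge 0$, which is vacuous. If $z_{ij}=1$, it gives $D^{\frak a}_i\ge \D(a_i,x_{j(i)})$ by Proposition~\ref{prop:distance}. The same reasoning yields $D^{\frak b}_j\ge \D(x_j,b_{k(j)})$.

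\textbf{Step 2: the products with binaries.} I substitute $\sum_i \xi_i z_{ij}=\Lambda_j$ and $\sum_i p_i^\vartheta\xi_i z_{ij}=\Lambda_j^\vartheta$.
\begin{itemize}
\item The $\rho_j$ constraint becomes $\Lambda_j\rho_j\le \Lambda^H_j D^{\frak b}_j$.
\item The DSR bound on $\varphi_j^\vartheta$ becomes $\Lambda_j(\mu_j^\vartheta-\Lambda_j^\vartheta)$, because $\sum_{i,i'}p_i^\vartheta\xi_i\xi_{i'}z_{ij}z_{i'j}=\Lambda_j^\vartheta\Lambda_j$. The ISR bound becomes $\Lambda_j\mu_j^\vartheta$.
\item The stability and ISR cone constraints become exactly those of Proposition~\ref{prop:stab} and Theorem~\ref{th:expectation_isr}, with $\Lambda_j^\vartheta$ in place of $\Lambda^\vartheta$.
\item In the constraint on $r_i$, the outer factor $z_{ij}$ makes the bracket vanish for $j\ne j(i)$, leaving only $r_i\ge\kappa_1D^{\frak a}_i$, which is implied by the constraint for $j(i)$. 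For $j=j(i)$ the constraint is $r_i\ge \kappa_1D^{\frak a}_i+\sum_\vartheta\Lambda_j^\vartheta\theta_j^\vartheta+\kappa_2(D^{\frak b}_j-\rho_j)$ under DSR, and analogously under ISR with $\theta_j^\vartheta+\varrho_j^\vartheta$.
\end{itemize}
These are precisely the single-server constraints for server $j=j(i)$.

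\textbf{Step 3: the tightness argument.} This step needs the most care, and it has two parts.

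First, Theorems~\ref{th:expectation_dsr} and~\ref{th:expectation_isr} show that $\sum_\vartheta\Lambda_j^\vartheta\theta_j^\vartheta$ (respectively with $\theta_j^\vartheta+\varrho_j^\vartheta$) can be made equal to $\E[T_j]$ and is never smaller. Second, $\rho_j\le(\Lambda_j^H/\Lambda_j)D^{\frak b}_j$, and the right-hand side is attainable, so $D^{\frak b}_j-\rho_j$ can be pushed down to $(1-\Lambda_j^H/\Lambda_j)D^{\frak b}_j$ and no further. Combining these with the distance bounds, $r_i$ is bounded below by \eqref{eq:expected_time}, and this bound is attained.

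Because $\Phi$ is nondecreasing, some optimal solution drives every $r_i$ down to $\E[R_i]$. I would also note that lower semicontinuity handles attainment, and that the monotonicity of $\Phi$ makes replacing equalities by inequalities harmless.

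Two edge cases need checking. If a server has $\I_j=\emptyset$, then $\Lambda_j=0$ and every constraint involving it becomes trivial. Such a server contributes nothing, which agrees with the original model. For the uncongested case I would drop the $\E[T_j]$ terms, equivalently set $\theta=\varrho=0$ and omit the stability constraints. The same argument then reduces to Steps 1 and 2 alone.

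Conversely, any feasible CDN design $(x,\mu,z,y)$ lifts to a feasible point of \eqref{eq:cnd} with the same objective value. To do this I take the explicit lifting points given in the proofs of Theorems~\ref{th:expectation_dsr} and~\ref{th:expectation_isr} and Proposition~\ref{prop:stab}, using the aggregated rates.
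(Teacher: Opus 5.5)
Your route is the paper's route. You fix $(z,y)$, collapse the products with binaries to the aggregated rates $\Lambda_j(z),\Lambda_j^\vartheta(z)$, and apply the single-server liftings of Corollary~\ref{coro:cp} server by server. Your bookkeeping is, if anything, more careful than the paper's: you keep the coupling through $\Phi$ and $\mathcal C$ instead of writing the objective as $\sum_j {\rm CDN}_j(z,y)$, which is only valid for additive $\Phi$, and you use monotonicity of $\Phi$ for the tightness step. The step that fails is your treatment of empty servers.

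If $\I_j=\emptyset$, then $\Lambda_j=0$. Under both DSR and ISR the bound on $\varphi_j^\vartheta$ becomes $\varphi_j^\vartheta\le 0$, so $\varphi_j^\vartheta=0$. But $(\varphi_j^\vartheta,\theta_j^\vartheta,1)\in\SS^2_+$ requires $\varphi_j^\vartheta\theta_j^\vartheta\ge 1$. These constraints are therefore infeasible, not trivial, and \eqref{eq:cnd} excludes every assignment that leaves a server idle.

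You read the design problem as allowing idle servers that contribute nothing. Under that reading, your converse claim that every feasible design lifts to a feasible point of \eqref{eq:cnd} fails for such designs, and exactness is false in general. For instance, if $|\I|<n$, the formulation is infeasible under DSR and ISR, while the design problem is feasible for a large enough budget.

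The repair is to state explicitly the assumption, implicit in the paper since $\E[T_j]$ carries $\Lambda_j$ in a denominator, that every deployed server receives positive traffic. Equivalently, require $\sum_{i}z_{ij}\ge 1$ for all $j$, and prove exactness for that problem. With this assumption your Steps 1--3 go through as written.

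There is a smaller slip of the same kind in the uncongested case. You cannot ``set $\theta=\varrho=0$'' while keeping $(\varphi_j^\vartheta,\theta_j^\vartheta,1)\in\SS^2_+$. Those cone constraints must be dropped together with the congestion terms.
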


\begin{proof}
Given allocation variables $z$ \eqref{eq:z-alloc}, the total request arrival rate handled by each edge server $j$ and its decomposition into hit and miss streams are given by
$$
\Lambda_j(z): = \sum_{i \in \I} \xi_i z_{ij}, \quad \text{and}\quad \Lambda_j^\vartheta(z) := \sum_{i \in \I} p_i^\vartheta \xi_i z_{ij}, \; \forall \vartheta\in \Theta.
$$
Here, $\xi_i$ denotes the demand rate generated at point $i$, and $p_i^\vartheta$ is the probability that a request generated at $i$ results in a cache of type $\vartheta$ at the edge server. Consequently, $\Lambda_j(z)$ represents the total incoming traffic to edge server $j$, while $\Lambda_j^\vartheta(z)$  corresponds to the cache of type $\vartheta$ arrival rates. Besides, we can define $\I_j(z):=\{i\in \I : z_{ij}=1\}$ as the set of demand points assigned to server $j \in \J$ by $z$. Together with allocation variables $y$ \eqref{eq:y-alloc}, we can consider for each edge server $j\in\J$ the problem {\rm CDN}$_j(z,y)$ as the \eqref{1cdn} defined by the input data: $\{a_i : i\in \I_j(z)\}$ as demand points, $b_k$ with $y_{jk}=1$ as the origin server, and $\Lambda_j(z),\Lambda_j^\vartheta(z)$ for all $\vartheta\in\Theta$ as the arrival flows. Then, the global CDN design problem can be seen as 

\begin{align*}
    \minimize \; & \sum_{j\in\J} {\rm CDN}_j(z,y)\\
    \st \; & \mu \in \mathcal{C}\\
    & \eqref{assign:edge}, \eqref{assign:origin},\\
    & z_{ij} \in \{0,1\}, \quad \forall i \in \I, j \in \J,\\
   & y_{jk} \in \{0,1\}, \quad \forall j \in \J, k \in \K.
\end{align*}

\eqref{eq:cnd} is a reformulation of the above problem by Corollary \ref{coro:cp}. Hence, it is an exact formulation for the CDN design problem as claimed.

\end{proof}
To facilitate the identification of the differences between the conic optimization problem derived in the previous section for the single-server case and the multi-server formulation, which requires the binary assignment variables $z$ and $y$, in model~\eqref{eq:cnd} we have highlighted in color the bilinear terms that arise as a consequence of the multi-server setting. Note that this {\rm MINLP} problem admits a reformulation as a mixed-integer conic program (MICP) induced by the McCormick-type envelopes (see Appendix~\ref{app_McCormick}) of the products by binary variables highlighted in the formulation. In general, we have to represent expressions $w=f(v)z$ with finite bounds $0\leq f(v)\leq UB$ for all decisions $v$ at optimality. These upper bounds depend on the nature of the images of $f$ and could be:
\begin{itemize}
    \item[--] induced by the diameter of the spatial breadth of the system, $$UB=\Delta:= \max(\{\|a_i-a_{i'}\|_p : i,i'\in \I\}\cup \{\|b_k-b_{k'}\|_p : k,k'\in \K\});$$ 

    \item[--] induced by the budget limitation of the service capacity, $UB=\frac{\Gamma}{c^\vartheta_j}$;

    \item[--] induced by a sufficiently large threshold on the response time, $UB=\T$;

    \item[--] or $UB=1$ in case $f(v)$ is a binary decision as well.
\end{itemize}

The McCormick envelopes to convexify \eqref{eq:cnd} fit into one of these types:
\begin{itemize}
    \item[--] $f$ is the identity in $\R_+$, leading to an expressions kind: $w=vz$. Namely,
    \begin{itemize}
        \item[$\bullet$] If $v\in\{\mu,\rho\}$, since the objective naturally drives $v$ to increase, then the envelope becomes
        \begin{align*}
            w&\leq UBz\\
            w&\leq v,
        \end{align*}
        where $UB=\frac{\Gamma}{c^\vartheta_j}$ for $v=\mu$, and $UB=\Delta$ for $v=\rho$.
        \item[$\bullet$] If $v\in \{D^{\frak b},\theta,\phi,\varrho\}$, since the objective naturally drives $v$ to decrease, then the envelope becomes
        \begin{equation*}
            v - UB(1-z)  \le w,
        \end{equation*}
        where $UB=\Delta$ for $v=D^{\frak b}$, and $UB=\T$ for $v=\theta,\phi,\varrho$.
        
        \item[$\bullet$] If $v=z'$ (stands for others unnecessary equality-to-$z$ allocation variables, so $UB=1$). Since the objective naturally drives the allocations to be zero, then the envelope becomes
        \begin{equation*}
            z + z' -1 \le w.
        \end{equation*}
    \end{itemize}

    \item[--] $f(v)=\|a-v\|_p:\R^m \to \R$. Since the objective naturally drives $f(v)$ to decrease, then the envelope becomes
    \begin{equation*}
        \|a-v\|_p  \le w + \Delta(1-z).
    \end{equation*}

    \item[--] $f(v_0,v^\vartheta_1,\ldots,v^\vartheta_{|\I|})= \sum_{\vartheta\in\Theta} \sum_{i\in\I}p_i^\vartheta\xi_iv^\vartheta_i + \kappa_2v_0 : \R_+\times \R_+^{|\Theta||\I|} \to \R$. Since the objective naturally drives $f(v_0,v^\vartheta_1,\ldots,v^\vartheta_{|\I|})$ to decrease, then the envelope becomes
    \begin{equation*}
        \sum_{\vartheta\in\Theta} \sum_{i\in\I}p_i^\vartheta\xi_iv^\vartheta_i + \kappa_2v_0 - \T(1-z) \le w.
    \end{equation*}
    \end{itemize}

\begin{theorem}\label{th:complexity}
    The mixed-integer cone program \eqref{eq:cnd} is {\rm NP}-hard.
\end{theorem}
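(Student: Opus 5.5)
Our plan is to reduce from a known NP-hard continuous location problem, namely the multi-source Weber problem (continuous $n$-median), which is NP-hard for the Euclidean and rectilinear norms even in the plane (Megiddo--Supowit). NP-hardness of an optimization model means that some subclass of its instances is already NP-hard. So it suffices to exhibit a polynomial-time embedding of every $n$-median instance into an instance of \eqref{eq:cnd} whose optimal value coincides with the $n$-median value. We work in the UNC setting, where \eqref{eq:cnd} is claimed exact by Theorem \ref{th:general_cdn}. If one prefers a congested regime, the same construction goes through with the congestion terms made constant or negligible, as described below.

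Given points $\{a_i\}_{i\in\I}\subset\R^m$ and $n$, we build the instance as follows.
\begin{itemize}
\item[--] Take a single origin server, $|\K|=1$, so $y_{j1}=1$ is forced by \eqref{assign:origin}.
\item[--] Set $p_i^H=1$ for all $i$, so that no cache misses occur. Then, in \eqref{eq:expected_time}, the factor $1-\Lambda^H_{j}/\Lambda_j$ vanishes. In the formulation, the constraint $\sum_i\xi_i z_{ij}\rho_j\le\sum_i p_i^H\xi_iz_{ij}D^{\frak b}_j$ allows $\rho_j=D_j^{\frak b}$ whenever $j$ serves some demand, so the origin term $\kappa_2(D_j^{\frak b}-\rho_j)$ drops. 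This is the one point to verify carefully.
\item[--] Take $\kappa_1=1$ and $\Phi=\sum$, which is conic representable, nondecreasing and lower semicontinuous.
\end{itemize}
Theorem \ref{thm:cdn_service_regimes} then gives the objective $\sum_i \min_j \|a_i-x_j\|_p$ plus a congestion term.

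For DSR/ISR we must neutralize the congestion term. The cleanest route is to take the budget $\Gamma$ so large and the $c_j^\vartheta$ so small that the $\mu_j^\vartheta$ can be taken arbitrarily large, driving $\E[T_j]\to 0$. Since the infimum might then not be attained, we instead use a decision version. The $n$-median instance has value at most $L$ if and only if the constructed instance has value at most $L+\delta$, with $\delta$ chosen below the minimal gap between distinct $n$-median values. Alternatively, we note that UNC is explicitly one of the dynamics covered by the statement, and do the reduction there directly.

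We then verify equivalence in two directions. Any $n$-median solution yields a feasible point of \eqref{eq:cnd}: assign each $a_i$ to its nearest center via $z$, and set $D^{\frak a}_i=\|a_i-x_{j(i)}\|_p$ and $r_i=D^{\frak a}_i$. Conversely, at any feasible point the constraints $((a_i-x_j)z_{ij},D_i^{\frak a})\in\LL^m_p$ give $r_i\ge \|a_i-x_{j(i)}\|_p\ge\min_j\|a_i-x_j\|_p$. The main obstacle is the congested-regime bookkeeping just described, and the fact that exact real arithmetic and gap arguments are delicate for Euclidean distances. To handle the latter, we would present the reduction for $p=1$, where the $n$-median problem is also NP-hard and all values are rational, and remark that the Euclidean case follows likewise. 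The construction is clearly polynomial in the input size, which completes the reduction.
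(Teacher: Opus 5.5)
Your proposal is correct and takes essentially the paper's route: specialize to the uncongested regime and reduce from the continuous planar $p$-median (and, in the paper, also $p$-center) problems shown NP-hard by Megiddo and Supowit. The main difference is that the paper removes the origin term by setting $\kappa_2=0$, whereas your choice $p_i^H=1$ does the same while respecting the standing assumption $\kappa_2>0$; your extra remarks on the congested regimes and on using $p=1$ to avoid exact-arithmetic issues go beyond the paper's short proof but are not needed for the claim.
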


\begin{proof}
    Consider the simpler setting of a planar ($m=2$) and Euclidean ($p=2$) CDN operating under no congestion and with no delay between edges and origin servers ($\kappa_2=0$). 
    The problem reduces to a \textit{classical} continuous multifacility location problem~\citep{blanco2014revisiting}, whose NP-hardness  follows from the NP-hardness of its simpler discrete versions, such as Euclidean $p$-median ($\Phi=\sum$) and $p$-center ($\Phi=\max$) problems \citep[see, e.g.,][]{megiddo1984complexity}.
\end{proof}

\begin{theorem}\label{th:single_edge_continuous}
Assume that $|\J|=1$. Then, the binary requirements $y_k\in\{0,1\}$, $k\in\K$,
can be replaced by their continuous relaxation $0\leq y_k\leq 1$ without changing the optimal value. Moreover, the relaxed problem admits an optimal solution in which $y$ is binary. Thus, this case of {\rm CDN} design problem reduces to a continuous conic optimization problem under the uncongested, {\rm DSR}, and {\rm ISR} dynamics.
\end{theorem}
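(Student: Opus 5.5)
The plan is to treat the relaxation as a parametric problem in $y$ alone. I fix every other variable and show that, over the simplex, the relaxed problem can be pushed to a vertex without increasing the objective. When $|\J|=1$, constraint \eqref{assign:edge} forces $z_{i1}=1$ for all $i\in\I$. All highlighted products involving $z$ then collapse to their second factor, and the stability and sojourn-time blocks of \eqref{eq:cnd} coincide with those of \eqref{eq:1-cdn-cp}, with $\Lambda$ and $\Lambda^\vartheta$ fixed. The variable $y=(y_k)_{k\in\K}$ then enters the model only through the origin-distance constraints
\[
\big((b_k-x)y_k,\,D^{\frak b}\big)\in\LL_p^m,\quad k\in\K,
\]
together with $\sum_{k}y_k=1$. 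It also enters indirectly, since $D^{\frak b}$ and $\rho$ appear in $\Lambda\rho\le\Lambda^H D^{\frak b}$ and in the $r_i$-constraints through the term $\kappa_2(D^{\frak b}-\rho)$.

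\textbf{Step 1 (reduction to a scalar function of $y$).} Fix $x$, $\mu$ and all queueing auxiliaries. The best $\rho$ is $\rho=\frac{\Lambda^H}{\Lambda}D^{\frak b}$, so each $r_i$ is bounded below by a constant plus $\kappa_2\big(1-\frac{\Lambda^H}{\Lambda}\big)D^{\frak b}$. This lower bound is nondecreasing in $D^{\frak b}$. Since $\Phi$ is nondecreasing, the $y$-part of the problem amounts to minimising the smallest feasible value of $D^{\frak b}$ over the feasible set of $y$. Writing $d_k:=\|b_k-x\|_p$, I will read the single-allocation product constraint, as in the McCormick discussion following Theorem~\ref{th:general_cdn}, through the aggregated quantity $\sum_k d_k y_k$, so that the minimal feasible $D^{\frak b}$ is $g(y)=\sum_k d_k y_k$. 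This quantity equals $d_{k}$ at the vertex $e_k$, which is the meaning of the product $D^{\frak b}y_k$ under \eqref{assign:origin}.

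\textbf{Step 2 (vertex argument).} Since $g$ is linear on the simplex $\{y\ge0,\ \sum_k y_k=1\}$, its minimum is attained at the vertex $e_{k^*}$ with $k^*\in\arg\min_k d_k$. Hence any relaxed feasible solution $(x,\mu,y,\dots)$ can be replaced by $(x,\mu,e_{k^*},\dots)$, with $D^{\frak b}=d_{k^*}$ and $\rho$ readjusted, without increasing any $r_i$, and therefore without increasing $\Phi$. This shows that the relaxed and binary optimal values coincide. It also shows that an optimal solution of the relaxation can be taken binary: apply the replacement to any optimal relaxed solution. Combined with Corollary~\ref{coro:cp}, the one-edge problem with relaxed $y$ is then a continuous conic program, and this holds under UNC, DSR and ISR alike, since none of the queueing blocks involve $y$.

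\textbf{Main obstacle.} The delicate step is Step 1, namely justifying that the $y$-dependence of the minimal feasible $D^{\frak b}$ is the linear (or at least concave) function $g$. Concavity is exactly what a vertex argument needs. If the cone constraints are read literally, as $D^{\frak b}\ge\max_k y_k d_k$, the induced function is convex in $y$ rather than concave, and a fractional $y$ could undercut every $d_k$. I would first try to establish that, at any optimum, the constraint chain used in the proof of Theorem~\ref{th:general_cdn} (the reduction to ${\rm CDN}_j(z,y)$ via Corollary~\ref{coro:cp}) identifies $D^{\frak b}$ with $\D(x,b_{k(j)})$ weighted by the allocation, that is, with $g(y)$. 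If that identification cannot be extracted from the stated constraints, the theorem needs the origin-distance block to be stated in this aggregated form, and I would make that reading explicit before invoking Step 2.
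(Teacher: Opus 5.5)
Your route is the same as the paper's: freeze everything except $y$, note that the feasible $y$ form the simplex, use that $\Phi$ is nondecreasing, and minimize a linear function over the simplex. The paper's proof simply asserts that the relaxed origin-assignment subproblem is linear in $y$. The obstacle you flag in Step~1 is exactly where this breaks, and for \eqref{eq:cnd} as written it is a genuine gap that cannot be closed.

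With $z_{i1}=1$, the constraints $((b_k-x)y_k,D^{\frak b})\in\LL_p^m$ say precisely $D^{\frak b}\ge\max_k y_k d_k$. Over the simplex,
\[
\min_{y\ge 0,\ \sum_k y_k=1}\ \max_{k\in\K}\, y_k d_k=\Big(\sum_{k\in\K}\frac{1}{d_k}\Big)^{-1}<\min_{k\in\K} d_k
\]
whenever $|\K|\ge 2$ and all $d_k>0$. Here is a concrete instance:
\begin{itemize}
\item Take one demand point $a$, origins $b_1$ and $b_2=2a-b_1$ with $\|b_1-a\|_p=d>0$, $\Phi$ the sum, and $c:=\kappa_2(1-\Lambda^H/\Lambda)\in(0,\kappa_1]$.
\item By the triangle inequality, every binary solution pays at least $c\,d$ in distance terms.
\item The relaxed point $x=a$, $y=(\tfrac12,\tfrac12)$, $D^{\frak b}=d/2$, $\rho=\tfrac{\Lambda^H}{\Lambda}D^{\frak b}$ pays only $c\,d/2$, with an identical queueing part.
\end{itemize}
So the relaxation strictly lowers the optimal value. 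The big-$M$ version $\|b_k-x\|_p\le D^{\frak b}+\Delta(1-y_k)$ from the McCormick discussion is weaker still. Consequently, no analysis of the constraint chain can identify $D^{\frak b}$ with $g(y)$ at relaxed optima, because that identification is false there.

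Your fallback, restating the block as $D^{\frak b}\ge\sum_k y_k\|b_k-x\|_p$, rescues the vertex argument but not your last sentence. The terms $y_k\|b_k-x\|_p$ are products of a variable with a convex function of $x$ and are not jointly convex in $(x,y)$. Hence the relaxed problem is not a conic program, and Corollary~\ref{coro:cp}, which treats one fixed origin $b$, does not cover it.

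In fact, no convex formulation that keeps $x$ as the edge location can be exact. Eliminating $y$ leaves the cost $\kappa_1\sum_i\|a_i-x\|_p+c\min_k\|x-b_k\|_p+\text{const}$. In the example above with $c>\kappa_1$, this equals $c\,d-(c-\kappa_1)\|x-a\|_p$ along the segment $[b_2,b_1]$, which is not convex. Partial minimization of a jointly convex problem would, however, give a convex function of $x$.

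What you can actually prove is an enumeration statement. The single-edge problem with binary $y$ equals the minimum over $k\in\K$ of the $|\K|$ continuous programs \eqref{eq:1-cdn-cp} with $b=b_k$, each exact by Corollary~\ref{coro:cp}. A single conic program whose relaxation in $y$ is exact would require a disjunctive (perspective) formulation with a separate copy of the location and distance variables for each origin.
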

\begin{proof}
The result follows from the observation that in case $|\J|=1$, the only remaining discrete decisions are the origin-assignment variables $y_k$. Thus, for fixed values of all variables except $y$, the feasible set of $y$ is the $|\K|$-dimensional simplex. Since the objective function $\Phi$ is assumed to be nondecreasing in the response-time vector, an optimal solution minimizes this origin-distance contribution over the simplex. Therefore, for fixed edge server location, the relaxed origin-assignment subproblem is a linear minimization problem over a simplex, and then, it always admits an optimal solution at an extreme point of the simplex.
\end{proof}

Once a feasible infrastructure design is determined, the resulting system must be evaluated according to performance criteria that quantify the quality of service experienced by the users. This choice is represented through the aggregation function $\Phi$ applied to the vector of expected response times encoded by $r=(r_i)_{i\in\I}$.

In this work, we restrict attention to aggregation functions $\Phi:\mathbb{R}^{|\I|}\to\mathbb{R}$ that are conic representable, nondecreasing in each component, and lower semicontinuous.

Table~\ref{tab:objectives} describes the three criteria used in the computational experiments.
\renewcommand{\arraystretch}{1.2}
\begin{table}[htbp]
\centering
\begin{tabular}{m{3.6cm} >{\centering\arraybackslash}m{2.2cm} m{5.6cm}}
\toprule
\textbf{Objective} & $\Phi(r)$ & \textbf{Criterion} \\
\midrule
{\bf SUM} of response times&
$\dsum_{i\in\I} r_i$
&
Minimizes the total expected response time and captures overall system efficiency.\\
{\bf C}onditional {\bf V}alue-{\bf a}t-{\bf R}isk &
${\rm CVaR}_\alpha(r_i)_{i\in\I}$
&
Emphasizes tail performance by averaging the largest fraction
$(1-\alpha)$ of expected response times.\\
{\bf EXP}onential penalty &
$\dsum_{i\in\I} e^{\zeta r_i},
\; \zeta>0$
&
Strongly penalizes large delays, promoting balanced solutions.\\
\bottomrule
\end{tabular}
\caption{Different objective functions for response-time minimization. Acronyms for the different criteria are highlighted in bold face.}
\label{tab:objectives}
\end{table}

Beyond these objectives, several alternative aggregation functions can be considered within the same framework, provided they satisfy the above structural properties such as maximum, ordered weighted averages, H\"older norms, or threshold penalties.

\section{Computational Experiments}
\label{sec:computational}

This section evaluates the computational performance, scalability, and structural behavior of the proposed mixed-integer conic formulations on instances derived from real Internet topology data~\citep{ITDK}. The purpose of the experiments is twofold. From a computational viewpoint, we assess whether the conic liftings developed in the previous sections lead to formulations that can be handled reliably by off-the-shelf conic optimization solvers. From a modeling viewpoint, we quantify the effect of explicitly incorporating queueing congestion into CDN design decisions.

The experiments pursue three complementary objectives. First, we assess the tractability of the proposed conic reformulations under different service regimes and objective functions. Second, we analyze the managerial implications of congestion-aware modeling on CDN design decisions and service quality. Third, we investigate how the interaction between latency, cache effectiveness, and queueing dynamics affects the resulting network configurations.

The benchmark instances are constructed from the CAIDA MIDAR Internet topology dataset (see Appendix~\ref{app:instances}), thereby providing geographically meaningful and structurally heterogeneous CDN scenarios. The experimental framework allows systematic variation of the demand size, infrastructure scale, congestion parameters, and budget availability while preserving realistic topological characteristics. Overall, the experiments are designed to evaluate not only the computational viability of the proposed formulations but also the operational insights obtained from explicitly integrating queueing effects into CDN optimization models.

\subsection{Experimental Design}

The experimental design systematically varies the principal structural and operational parameters of the CDN problem. The number of demand points $|\I|$, edge servers $|\J|$, and origin servers $|\K|$ are varied to analyze scalability and assignment complexity. The cardinality of the edge layer is selected as a function of the demand size so as to preserve realistic infrastructure sparsity while progressively increasing routing flexibility.

The parameter $\kappa_2$ controls the relative importance of the edge-to-origin propagation delay and therefore modulates the impact of cache misses on service quality. In addition, the total service budget is controlled through a scaling factor $\beta$ multiplying the minimum feasible budget $\Gamma_{\min}$, guaranteeing queue stability.  Since the stability budget is computed so as to ensure feasibility under the most restrictive queueing regime, it provides a common reference level for comparing DSR, ISR, and UNC dynamics. This allows us to investigate operational regimes ranging from nearly saturated systems to moderately over-provisioned infrastructures.

To evaluate the impact of different service-quality criteria, we consider the three objective functions described in Table~\ref{tab:objectives}, with confidence level $\alpha=0.90$ for the CVaR, and $\zeta=0.005$ for the exponential penalty. These objectives allow us to analyze different trade-offs between efficiency, fairness, and robustness, respectively.

Table~\ref{tab:parameters} gathers the experimental parameters.

\renewcommand{\arraystretch}{1}
\begin{table}[htbp]
\centering
\begin{tabular}{lll}
\toprule
{\bf Parameter} & {\bf Description} & {\bf Values} \\
\midrule
$|\I|$ & Number of demand points &
$\{10,20,30,40,50,60,70,80,90,100,150, 200\}$ \\
$|\J|$ & Number of edge servers &
$
\left\{
\begin{array}{ll}
\{1,2\}, & |\I|=10,\\
\{1,2,3\}, & |\I|\geq 20
\end{array}
\right.
$\\
$|\K|$ & Number of origin servers & $\{1,3,5\}$\\
$\kappa_1$ & User-to-edge latency weight & $1.0$\\
$\kappa_2$ & Edge-to-origin latency weight & $\{0.05,0.5,1.5\}$\\
$\beta$ & Budget scaling factor & $\{1.01,1.10\}$\\
$\Phi$ & Objective function &
$\{\mathrm{SUM},\mathrm{CVaR},\mathrm{EXP}\}$\\
\bottomrule
\end{tabular}
\caption{Experimental parameters.}
\label{tab:parameters}
\end{table}

All experiments were conducted on a Mac Studio (2022) equipped with Apple Silicon architecture and $64$GB unified memory, running macOS 26.3.1. The formulations were implemented in Python and solved using MOSEK~11.1 under a single-threaded environment. A time limit of 2 hours was set for all instances. All instances and results are publicly available in our GitHub repository~\citep{CDN2026}.

To characterize the structural properties of the generated benchmark instances, Table~\ref{tab:data_statistics} reports aggregate statistics associated with the topology, demand intensities, cache-hit probabilities, and spatial dispersion of the sampled networks. In particular, $\overline{\deg}$ denotes the average node degree of the sampled demand graph and measures the local connectivity of the underlying network structure. The quantity $\mathrm{CV}(\xi)$ represents the coefficient of variation of the demand intensities $\xi_i$, capturing the relative heterogeneity of the traffic generated across demand points, whereas $G(\xi)$ corresponds to the associated Gini index and quantifies the degree of inequality in the demand distribution. The statistics $\overline{p_i}$ and $\sigma(p_i)$ denote, respectively, the mean and standard deviation of the cache-hit probabilities assigned to the demand points, thereby describing both the average cache effectiveness and its variability throughout the network. The spatial descriptor $\overline{\D}$ represents the average pairwise Euclidean distance between demand locations, while $\overline{\Delta}$ corresponds to the average geometric diameter of the instances, that is, the maximum pairwise distance between demand points. The first four columns summarize the global distribution of these descriptors across all generated configurations, whereas the remaining columns analyze their evolution as the number of demand points increases. The results show persistent heterogeneity in both demand intensities and cache effectiveness, as reflected by the nonzero coefficients of variation and Gini indices across all configurations. At the same time, these statistics remain remarkably stable as the problem dimension grows: the average degree varies only moderately, the traffic heterogeneity indicators remain within relatively narrow intervals, and the average cache-hit probabilities stay concentrated around $0.55$--$0.56$. The spatial descriptors further reveal geographically dispersed instances with large diameters and relatively stable average pairwise distances, indicating that the generation procedure preserves coherent geographic coverage together with nontrivial propagation-delay structures. Overall, the reported statistics confirm that the proposed benchmark generation framework produces structurally heterogeneous, geographically meaningful, and statistically stable CDN instances across different scales.

\begin{table}[htbp]
\centering
\adjustbox{scale=0.85}{
\begin{tabular}{lrrrrrrrrrrrr}
\toprule
&
\multicolumn{4}{c}{Global statistics}
&
\multicolumn{8}{c}{By instance size $|\I|$}
\\
\cmidrule(lr){2-5}
\cmidrule(lr){6-13}
Statistic
& Mean
& Std
& Min
& Max
& 10
& 20
& 30
& 50
& 80
& 100
& 150
& 200
\\
\midrule
$\overline{\deg}$
& 1.193 & 0.204 & 1.000 & 2.100
& 1.06 & 1.24 & 1.31 & 1.24 & 1.17 & 1.15 & 1.15 & 1.14
\\
$\mathrm{CV}(\xi)$
& 0.288 & 0.141 & 0.117 & 0.611
& 0.14 & 0.23 & 0.35 & 0.33 & 0.29 & 0.27 & 0.28 & 0.28
\\
$G(\xi)$
& 0.109 & 0.033 & 0.062 & 0.220
& 0.07 & 0.10 & 0.13 & 0.12 & 0.11 & 0.10 & 0.10 & 0.10
\\
$\overline{p_i}$
& 0.558 & 0.008 & 0.550 & 0.594
& 0.552 & 0.560 & 0.562 & 0.560 & 0.557 & 0.556 & 0.556 & 0.556
\\
$\sigma(p_i)$
& 0.036 & 0.030 & 0.000 & 0.116
& 0.006 & 0.026 & 0.049 & 0.043 & 0.036 & 0.033 & 0.035 & 0.037
\\
$\overline{\D}$
& 93.714 & 5.747 & 75.676 & 115.437
& 100.3 & 100.1 & 94.4 & 91.8 & 92.3 & 92.1 & 91.7 & 92.4
\\
$\overline{\Delta}$
& 283.646 & 28.138 & 194.687 & 324.916
& 250.4 & 267.2 & 279.1 & 281.0 & 293.4 & 300.7 & 308.3 & 314.3
\\
\bottomrule
\end{tabular}}
\caption{Structural statistics of the generated benchmark instances.}
\label{tab:data_statistics}
\end{table}

\subsection{Single-Edge CDN}

We first analyze the particular case in which a single edge server is deployed, namely $|\J|=1$. By Theorem~\ref{th:single_edge_continuous}, the resulting model is a conic optimization problem. This remains true even when several origin servers are available, since the model contains continuous location and capacity decisions but no demand-to-edge assignment decisions. Hence, this setting isolates the intrinsic numerical behavior of the conic queueing reformulations from the additional combinatorial complexity that appears in the multi-edge case. Although structurally simpler than the general multi-edge setting, this regime already captures the fundamental interaction between geographical latency, cache performance, congestion effects, and service-capacity decisions.

\subsubsection*{Computational Performance}

The obtained results indicate that all three formulations remain highly tractable across the entire benchmark set. Table~\ref{tab:single_edge_scalability_I} reports the scalability of the formulations with respect to the number of demand points. Even for the largest tested instances with $|\I|=200$, the median solution time remains below three seconds, while the average optimality gaps are zero throughout all experiments. The only discrete decisions in this setting are associated with the selection of the origin server, and they do not introduce a significant branch-and-bound burden. These results confirm that the proposed conic reformulations yield tight and numerically stable formulations in the single-edge setting.

\begin{table}
\begin{tabular}{r>{\centering\arraybackslash}p{2.5cm}>{\centering\arraybackslash}p{2.5cm}>{\centering\arraybackslash}p{2.5cm}}
\toprule
$|\I|$ & CPU Time (median, in secs.) &  \# IPM Iterations (mean) & \# Relaxations (mean) \\
\midrule
10 & 0.01 &  141.00 & 6.40 \\
20 & 0.02 &  149.00 & 6.40 \\
30 & 0.03 &  155.70 & 6.40 \\
40 & 0.03 &  159.70 & 6.40 \\
50 & 0.07 &  151.40 & 6.50 \\
60 & 0.11 &  154.00 & 6.60 \\
70 & 0.13 &  158.70 & 6.60 \\
80 & 0.10 &  187.00 & 6.60 \\
90 & 0.11 &  193.70 & 6.60 \\
100 & 0.67 &  197.20 & 6.60 \\
150 & 1.32 &  209.70 & 6.60 \\
200 & 2.15 &  228.10 & 6.50 \\
\bottomrule
\end{tabular}
\caption{Scalability of the single-edge formulations with respect to the number of demand points.}
\label{tab:single_edge_scalability_I}
\end{table}

The numerical behavior of the formulations is further summarized in Table~\ref{tab:single_edge_numerical_strength}. Several observations emerge. First, the uncongested approximation (UNC) consistently provides the smallest computational burden, as expected, since it avoids the nonlinear queueing interactions present in the congestion-aware formulations. However, the computational overhead introduced by the queueing dynamics remains moderate. In particular, the average solution times remain below one second for both DSR and ISR over the entire benchmark set.


Second, the DSR formulation systematically outperforms ISR in terms of running times and interior-point method (IPM) iterations. Averaged over all instances, DSR requires approximately $173$ interior-point iterations, whereas ISR requires around $210$. This behavior is consistent across all objective functions and reflects the stronger nonlinear coupling introduced by the integrated ${\rm M/H}_{|\Theta|}/1$ queueing representation of ISR. In contrast, DSR decomposes congestion effects into independent queueing subsystems, which leads to numerically milder conic interactions.

Third, the number of interior-point iterations grows only moderately with the problem size. As shown in Table~\ref{tab:single_edge_scalability_I}, the average number of barrier iterations increases from approximately $140$ for small instances to around $230$ for the largest instances considered. Similarly, the number of conic relaxations explored by the branch-and-bound procedure remains remarkably stable across all scales. Together with the zero optimality gaps, these results suggest that the proposed conic liftings generate strong and numerically stable formulations whose continuous relaxations remain highly informative even in the presence of nonlinear queueing effects.

\begin{table}
\begin{tabular}{ll>{\centering\arraybackslash}p{2.3cm}>{\centering\arraybackslash}p{2.5cm}>{\centering\arraybackslash}p{2.5cm}>{\centering\arraybackslash}p{1.9cm}}
\toprule
Regime & $\Phi$ &  CPU Time (mean, in secs.) & CPU Time (median, in secs.)& \# IPM Iterations (mean) & \# Relaxations (mean)\\
\midrule
\multirow{3}{*}{DSR} & CVaR & 0.730 & 0.100 & 177.100 & 6.500 \\
 & EXP & 0.770 & 0.080 & 188.600 & 6.500 \\
 & SUM & 0.610 & 0.090 & 152.300 & 6.600 \\\midrule
\multirow{3}{*}{ISR} & CVaR & 0.800 & 0.120 & 190.500 & 6.400 \\
& EXP & 1.060 & 0.140 & 240.300 & 6.500 \\
 & SUM & 0.920 & 0.100 & 198.100 & 6.400 \\\midrule
\multirow{3}{*}{UNC} & CVaR & 0.300 & 0.060 & 134.900 & 6.600 \\
 & EXP & 0.420 & 0.050 & 156.000 & 6.600 \\
 & SUM & 0.280 & 0.050 & 125.800 & 6.500 \\
\bottomrule
\end{tabular}
\caption{Numerical behavior of the single-edge formulations. The table reports CPU times, interior-point method iterations, and branch-and-bound statistics.}
\label{tab:single_edge_numerical_strength}
\end{table}

Finally, Table~\ref{tab:single_edge_numerical_strength} reveals also that the computational impact of the objective function remains relatively limited. As expected, the exponential penalty produces the largest computational burden due to the additional exponential cone structures, whereas the sum objective is consistently the easiest to solve. Nevertheless, even under the most challenging ISR--EXP combination, the average running time remains close to one second, confirming the practical tractability of the proposed conic framework in the single-edge setting.

Overall, these results demonstrate that the proposed conic reformulations effectively isolate the nonlinear queueing interactions into tractable low-dimensional conic blocks, thereby preserving excellent numerical behavior throughout the entire benchmark set.

\subsubsection*{Managerial Insights}

The experiments reveal that congestion-aware modeling plays a critical role in accurately assessing tail performance. As the origin-delay parameter $\kappa_2$ increases, cache misses become substantially more expensive and the degradation in service quality becomes increasingly pronounced under the congestion-aware regimes. In contrast, the uncongested approximation systematically underestimates the response times experienced under heavy traffic conditions. Moreover, the ISR regime consistently produces larger tail delays than DSR, reflecting the stronger congestion interactions induced by the shared queueing structure. These results indicate that ignoring congestion effects may lead operators to underestimate the infrastructure required to maintain acceptable reliability levels.

The experiments also show that both the optimization criterion and the size of the origin layer strongly influence the robustness of the resulting CDN designs. The sum objective tends to prioritize aggregate efficiency at the expense of poorly served users, thereby producing the largest tail response times. Conversely, the CVaR objective provides the strongest protection against extreme delays, while the exponential objective yields an intermediate compromise between efficiency and robustness. Regarding the origin infrastructure, increasing the number of origin servers improves service quality by reducing the distance associated with cache misses and providing greater routing flexibility. However, the improvements exhibit clear diminishing returns, suggesting that beyond a moderate number of origins, further infrastructure expansion yields only limited additional reductions in tail latency. Overall, the results highlight the importance of jointly accounting for congestion effects, robustness-oriented objectives, and origin-layer sizing when designing reliable CDN infrastructures.

\subsection{Multiple-Edge CDN}

We next evaluate the whole mixed-integer conic formulation \eqref{eq:cnd} in the general multi-edge setting, where the models jointly determine edge-server locations, demand assignments, miss-routing decisions, and service capacities. In contrast with the single-edge regime, this setting introduces a substantially richer combinatorial structure due to the interaction between binary allocation variables and nonlinear congestion effects. Consequently, these experiments evaluate not only the numerical behavior of the conic queueing liftings, but also their interaction with the discrete location-allocation layer.

\subsubsection*{Computational Performance}

Table~\ref{tab:multi_cpu_combined} reports the average computational results obtained with the proposed models under the different service regimes and objective functions.

\begin{table}[ht]
\centering
\begin{tabular}{p{0.7cm}cc|rrr|rrr|rrr}
\toprule
 && & \multicolumn{3}{c}{CPU Time (secs.)} &
 \multicolumn{3}{c}{Unsolved (\%)} &
 \multicolumn{3}{c}{MIP Gap (\%)}\\
\midrule
Regime & $|\mathcal I|$ & $|\J|$
& SUM & CVaR & EXP
& SUM & CVaR & EXP
& SUM & CVaR & EXP \\
\midrule
\multirow{10}{*}{DSR} & \multirow{2}{*}{10} & 2 & 0.75 & 0.57 & 0.81 & 0 & 0 & 0 & 0 & 0 & 0 \\
 &  & 3 & 13.72 & 3.48 & 17.41 & 0 & 0 & 0 & 0 & 0 & 0 \\
 & \multirow{2}{*}{20} & 2 & 507.92 & 72 & 301.71 & 0 & 0 & 0 & 0 & 0 & 0 \\
 &  & 3 & 5821.54 & 538.42 & 5526.21 & 62.07 & 0 & 61.40 & 25.76 & 0 & 5.68 \\
 & \multirow{2}{*}{30} & 2 & 5636.98 & 256.59 & 3843.21 & 52.33 & 0 & 30.23 & 18.55 & 0 & 2.87 \\
 &  & 3 &7200 & 2053.74 &7200 & 100 & 7.69 & 100 & 77.42 & 2.80 & 15.53 \\
 & \multirow{2}{*}{40} & 2 &7200 & 586.31 & 7000.10 & 100 & 0 & 89.80 & 47.64 & 0 & 9.59 \\
 &  & 3 &7200 & 3403.77 &7200 & 100 & 9.09 & 100 & 92.71 & 6.16 & 16.77 \\
 & \multirow{2}{*}{50} & 2 &7200 & 1605.54 &7200 & 100 & 0 & 100 & 63.20 & 0 & 16.83 \\
 &  & 3 &7200 & 6985.69 &7200 & 100 & 80 & 100 & 95.36 & 36.86 & 16.94 \\\midrule
\multirow{10}{*}{ISR} & \multirow{2}{*}{10} & 2 & 1.43 & 0.65 & 1.26 & 0 & 0 & 0 & 0 & 0 & 0 \\
 &  & 3 & 209.60 & 12.15 & 71.88 & 0 & 0 & 0 & 0 & 0 & 0 \\
 & \multirow{2}{*}{20} & 2 & 925.56 & 57.19 & 525.41 & 3.33 & 0 & 1.11 & 0.62 & 0 & 0.13 \\
 &  & 3 & 6149.68 & 1218.77 & 6084.54 & 72.41 & 10.34 & 68.42 & 40.88 & 0.73 & 10.08 \\
 & \multirow{2}{*}{30} & 2 & 5595.86 & 177.77 & 4083.35 & 60.47 & 0 & 33.72 & 19.50 & 0 & 3.00 \\
 &  & 3 &7200 & 1455.16 &7200 & 100 & 7.69 & 100 & 71.46 & 0.35 & 11.38 \\
 & \multirow{2}{*}{40} & 2 &7200 & 715.48 & 7114.11 & 100 & 2.04 & 95.83 & 49.94 & 0.27 & 10.49 \\
 &  & 3 &7200 & 5815.30 &7200 & 100 & 54.55 & 100 & 79.59 & 15.58 & 12.36 \\
 & \multirow{2}{*}{50} & 2 &7200 & 1235.17 &7200 & 100 & 4.65 & 100 & 62.20 & 0.42 & 15.27 \\
 &  & 3 &7200 & 6668.13 &7200 & 100 & 60 & 100 & 85.09 & 20.48 & 12.94 \\\midrule
\multirow{10}{*}{UNC} & \multirow{2}{*}{10} & 2 & 0.17 & 0.11 & 0.18 & 0 & 0 & 0 & 0 & 0 & 0 \\
 &  & 3 & 2.74 & 0.53 & 2.44 & 0 & 0 & 0 & 0 & 0 & 0 \\
 & \multirow{2}{*}{20} & 2 & 48.78 & 3.98 & 34.25 & 0 & 0 & 0 & 0 & 0 & 0 \\
 &  & 3 & 2477.65 & 27.70 & 1941.71 & 25.86 & 0 & 15.79 & 6.51 & 0 & 1.14 \\
 & \multirow{2}{*}{30} & 2 & 1771.63 & 11.13 & 1177.62 & 12.79 & 0 & 8.14 & 2 & 0 & 0.46 \\
 &  & 3 &7200 & 36.07 & 5711.69 & 100 & 0 & 53.85 & 39.49 & 0 & 3.94 \\
 & \multirow{2}{*}{40} & 2 & 5725.72 & 27.73 & 4218.94 & 65.31 & 0 & 41.67 & 19.01 & 0 & 3.62 \\
 &  & 3 &7200 & 978.92 &7200 & 100 & 0 & 100 & 60.82 & 0 & 6.97 \\
 & \multirow{2}{*}{50} & 2 &7200 & 57.82 & 6426.14 & 100 & 0 & 71.43 & 41 & 0 & 7.79 \\
 &  & 3 &7200 & 832.98 &7200 & 100 & 0 & 100 & 67.08 & 0 & 7.45 \\
\bottomrule
\end{tabular}
\caption{Computational performance of the multi-edge CDN instances.}
\label{tab:multi_cpu_combined}
\end{table}

From these results, we derive three main conclusions. First, the computational difficulty is driven primarily by the joint increase in the number of demand points and the number of deployed edge servers. Small instances with $|\mathcal I|=10$ are solved almost instantaneously across all regimes and objectives, even for $|\J|=3$. However, the transition to medium-scale instances already produces a dramatic increase in complexity. In particular, moving from $|\J|=2$ to $|\J|=3$ significantly enlarges the assignment and congestion interactions, leading to severe scalability issues for the largest instances. For example, under the DSR regime with the SUM objective, the average CPU time increases from $507.92$ seconds for $(|\mathcal I|,|\J|)=(20,2)$ to the time limit for $(20,3)$, where more than $62\%$ of the instances remain unsolved. Similar saturation effects appear systematically for $|\mathcal I|\geq 30$, especially when $|\J|=3$, where many formulations reach the time limit with substantial optimality gaps.

Second, the congestion-aware formulations remain computationally tractable despite the additional queueing-based nonlinearities introduced by the proposed conic models. As expected, the uncongested approximation (UNC) is systematically the easiest regime to solve, since it avoids the stochastic congestion interactions. Nevertheless, the additional computational burden induced by the queueing formulations remains moderate for small and medium-size instances. Moreover, the DSR regime consistently outperforms ISR in terms of computational behavior. This effect becomes particularly visible for difficult configurations. For instance, under the SUM objective with $(|\mathcal I|,|\J|)=(20,3)$, DSR reports an average optimality gap of $25.76\%$, whereas ISR increases to $40.88\%$. Similarly, for $(|\mathcal I|,|\J|)=(40,3)$ under CVaR, DSR leaves only $9.09\%$ of the instances unsolved with an average gap of $6.16\%$, while ISR leaves $54.55\%$ unsolved and reaches a gap of $15.58\%$. This behavior reflects the stronger nonlinear coupling induced by the integrated $\mathrm{M/H}_{|\Theta|}/1$ representation of ISR, where cache hits and misses interact through a shared congestion process.

Third, the optimization criterion has a major impact on computational tractability. The CVaR objective is by far the easiest formulation to solve across all regimes and instance sizes. Remarkably, most CVaR instances are solved to optimality or near optimality, even for relatively large configurations. For example, under UNC all CVaR instances up to $(|\mathcal I|,|\J|)=(50,3)$ are solved without any unsolved instances and with zero optimality gap. Even under the more difficult DSR and ISR regimes, the CVaR formulations remain highly tractable, with significantly smaller CPU times and optimality gaps than the SUM and EXP objectives. In contrast, the SUM and EXP objectives become considerably more difficult as the system size grows. The SUM objective is particularly challenging because minimizing the average response time strongly couples assignment decisions with congestion effects, whereas the EXP objective additionally introduces exponential cone structures that further increase the numerical burden. Indeed, for large instances with $|\J|=3$, both objectives frequently hit the time limit with unresolved gaps above $70\%$ under the congestion-aware regimes. Nevertheless, even in these difficult cases, the proposed mixed-integer conic reformulations still produce feasible high-quality solutions with moderate average gaps for many medium-scale instances, confirming the practical viability of the proposed framework.

\subsubsection*{Managerial Insights}

From a service-quality perspective, Table~\ref{tab:multi_quality} reports the deviations of the expected response times obtained under the congestion-aware formulations with respect to the corresponding uncongested (UNC) approximation. More precisely, the table summarizes the relative deterioration in the expected response times induced by explicitly incorporating queueing congestion effects into the CDN design process. The indicators include the mean, median, and upper-tail deviations (percentiles $90$ and $95$), thereby providing a global view of both the average performance loss and the worst-case congestion amplification effects.

\begin{table}[h]
\begin{tabular}{llrrrrrr}
\toprule
$\Phi$ & Regime & Mean dev. & Median dev. & P$_{90}$ dev. & P$_{95}$ dev. \\
\midrule
\multirow{2}{*}{SUM} & DSR & 5.33 & 0.82 & 7.00 & 7.47 \\
 & ISR & 25.85 & 9.72 & 13.38 & 12.30 \\\midrule
\multirow{2}{*}{CVaR} & DSR & 2.06 & 0.60 & 1.27 & 1.29 \\
 & ISR & 13.18 & 6.41 & 11.63 & 11.43 \\\midrule
\multirow{2}{*}{EXP} & DSR & 5.03 & 0.75 & 6.91 & 6.83 \\
 & ISR & 26.31 & 10.11 & 14.07 & 12.73 \\
\bottomrule
\end{tabular}
\caption{Deviation statistics of the response times under the congestion-aware regimes.\label{tab:multi_quality}}
\end{table}

The results reveal a pronounced structural difference between the DSR and ISR congestion regimes. The DSR regime consistently produces comparatively small deviations across all objective functions, with mean deviations ranging between approximately $2\%$ and $5\%$. Moreover, the median deviations remain below $1\%$ in all cases, and even the upper-tail indicators remain moderate, with P$_{90}$ and P$_{95}$ deviations around $7\%$ for the SUM and EXP objectives and close to $1\%$ for CVaR. These results indicate that the disaggregated queueing structure generates relatively stable response-time distributions and limits congestion propagation effects across the network.

In contrast, the ISR regime exhibits substantially larger deviations under all optimization criteria. The SUM and EXP objectives produce mean deviations above $25\%$, together with median deviations around $10\%$ and upper-tail deviations exceeding $12\%$--$14\%$. This behavior reflects the stronger congestion interactions induced by the integrated $\mathrm{M/H}_{|\Theta|}/1$ representation, where cache hits and misses compete within a shared queueing system. Consequently, congestion effects become significantly amplified, especially under heavily loaded configurations, leading to markedly different response-time predictions and infrastructure configurations relative to the uncongested approximation.

The table also highlights the strong impact of the optimization criterion itself. Under both DSR and ISR, the CVaR objective systematically produces the smallest deviations, both on average and in the upper tails of the response-time distribution. In particular, under ISR, the mean deviation decreases from approximately $26\%$ under SUM and EXP to only $13.18\%$ under CVaR, while the tail deviations are also substantially reduced. This behavior confirms that risk-aware optimization criteria are considerably more robust to congestion amplification effects, especially when queue interactions are strongly coupled. Overall, these results reinforce the importance of simultaneously incorporating congestion-aware queueing models and tail-oriented optimization criteria when designing large-scale CDN infrastructures subject to stochastic service dynamics.

It is important to note that these differences are also influenced by the common budget restriction imposed on the service capacities. In the experiments, the upper bound $\Gamma$ of the budget constraint is calibrated so as to guarantee feasibility simultaneously for both the DSR and ISR formulations according to Proposition \ref{prop:budget_feasibility}. Since the ISR regime is structurally more demanding in terms of congestion management, the same budget bound becomes effectively tighter under ISR, limiting the possibility of increasing service capacities to mitigate congestion effects. As a consequence, ISR solutions operate under significantly higher congestion levels. In contrast, the DSR regime benefits from greater flexibility in distributing the available capacities across the disaggregated service subsystems, allowing the model to adapt the capacities more efficiently and therefore produce substantially lower congestion levels and smaller response-time deviations.

Table~\ref{tab:objective_cross_deviation} reports the mean relative deviation obtained when solutions optimized under a given objective function are evaluated using the remaining criteria. The results reveal that the SUM and EXP objectives produce very similar solutions, as evidenced by the negligible deviations observed when exchanging their evaluations: the SUM solution evaluated under the EXP criterion yields a deviation of only $0.17\%$, while the EXP solution evaluated under the SUM objective exhibits a deviation of $0.44\%$. This indicates that both criteria promote closely related infrastructure configurations and congestion patterns. In contrast, the CVaR objective generates substantially different solutions. In particular, the CVaR-minimized solution incurs a deviation of $72.22\%$ when evaluated under the SUM objective and $24.12\%$ under the EXP criterion, reflecting the conservative nature of the CVaR formulation, which prioritizes the mitigation of extreme response times over average efficiency. Conversely, the SUM and EXP solutions exhibit more moderate degradations when assessed under the CVaR criterion, namely $16.91\%$ and $13.59\%$, respectively. Overall, these results highlight a clear trade-off between aggregate efficiency and tail-risk protection, with SUM and EXP emphasizing average performance, whereas CVaR produces more robust but structurally different CDN configurations.
\begin{table}
\begin{tabular}{lrrr}
\toprule
$\Phi$ & SUM & CVAR & EXP \\
\midrule
SUM & 0.00 & 16.91 & 0.17 \\
CVAR & 72.22 & 0.00 & 24.12 \\
EXP & 0.44 & 13.59 & 0.00 \\
\bottomrule
\end{tabular}
\caption{Mean relative deviation of each solution evaluated under the three objective functions.}
\label{tab:objective_cross_deviation}
\end{table}
Finally, Table~\ref{tab:p_effect_response_times} summarizes the impact of increasing the number of edge servers with respect to the single-server configuration ($|\J|=1$) on the response times. The reported values represent the average percentage improvement obtained when the same instance is solved with additional edge servers while keeping all remaining parameters fixed. Overall, the results show that enlarging the edge infrastructure systematically improves the response-time performance across all objective functions and queueing regimes. As expected, increasing the number of edge servers provides additional flexibility for distributing demand across the network, thereby reducing both congestion levels and propagation delays experienced by users.

Under the DSR regime, the improvements are particularly large and highly stable across all objectives. Increasing the number of edge servers from $|\J|=1$ to $|\J|=2$ already yields average improvements around $44\%$--$46\%$, while configurations with $|\J|=3$ achieve improvements exceeding $61\%$ in all cases. A very similar behavior is observed under the uncongested UNC approximation, where the improvements are slightly larger and reach values above $62\%$ for the CVaR and EXP objectives. These results indicate that, under DSR and UNC, the additional edge infrastructure can be exploited very efficiently to redistribute the traffic load and reduce the response times throughout the network.

The ISR regime also benefits substantially from increasing the number of edge servers, although the magnitude of the improvements is systematically smaller than under DSR and UNC. For example, under the SUM objective, increasing the number of servers to $|\J|=3$ improves the response times by approximately $47.57\%$, compared with more than $61\%$ under DSR and UNC. Similar differences appear for the EXP objective, where ISR achieves an improvement of $48.43\%$, whereas DSR and UNC exceed $61\%$. Even under the CVaR objective, which remains the most stable criterion across all regimes, ISR still produces noticeably smaller gains than the alternative formulations.

These results suggest that the integrated queueing dynamics inherent to ISR reduce the marginal benefit obtained from deploying additional edge servers. Since the ISR regime models a stronger coupling between cache-hit and cache-miss traffic through a shared queueing process, congestion effects propagate more intensely across the system and limit the efficiency gains that can be achieved through traffic redistribution. In contrast, the DSR formulation decouples the congestion dynamics into independent service subsystems, allowing the additional infrastructure to absorb the demand more effectively and leading to substantially larger scalability gains. Consequently, while increasing the number of edge servers systematically improves service quality in all regimes, the benefits are significantly stronger under DSR and UNC than under ISR.
\begin{table}[ht]
\centering
\renewcommand{\arraystretch}{0.95}

\begin{tabular}{llcc}
\toprule
$\Phi$ & Regime
& $|\J|=2$ & $|\J|=3$ \\
\midrule
\multirow{3}{*}{SUM} & DSR & 45.74 & 61.10 \\
& ISR & 40.47 & 47.57 \\
 & UNC & 46.21 & 61.63 \\\midrule
\multirow{3}{*}{CVaR} & DSR & 44.17 & 61.68 \\
 & ISR & 41.73 & 55.60 \\
 & UNC & 44.24 & 62.57 \\\midrule
\multirow{3}{*}{EXP} & DSR & 45.98 & 61.83 \\
 & ISR & 40.74 & 48.43 \\
 & UNC & 46.48 & 62.34 \\
\bottomrule
\end{tabular}
\caption{Effect of increasing the number of edge servers on response times, measured as the percentage improvement with respect to the same instance with $|\J|=1$.}
\label{tab:p_effect_response_times}
\end{table}
Taken together, the computational experiments support two main conclusions. First, the proposed conic liftings lead to formulations that are computationally reliable for single-edge and moderately sized multi-edge CDN design problems, with the main source of difficulty arising from the combinatorial assignment layer rather than from the conic representation of queueing effects. Second, the congestion-aware models produce materially different service-quality assessments and design trade-offs from the uncongested approximation, especially under shared queueing dynamics and tail-sensitive objectives.

\section{Conclusion and Further Research}\label{sec:conclusion}

This paper develops a mixed-integer conic optimization framework for the design of content delivery networks (CDNs) under congestion effects. The proposed approach integrates facility location decisions, demand allocation, queueing-based performance modeling, and service capacity planning within a unified optimization framework. Unlike classical CDN design models based on deterministic distance approximations or clustering heuristics, the framework explicitly incorporates stochastic congestion phenomena arising from cache-hit and cache-miss dynamics at edge servers.

A key methodological contribution lies in the treatment of nonlinear queueing expressions through conic lifting techniques rather than approximation or linearization. It is shown that response-time expressions derived from the underlying queueing models can be decomposed into convex primitive components such as reciprocal terms, affine ratios, and quadratic-over-linear structures, all of which admit exact conic representations. This result highlights that conic optimization is not merely a computational tool, but the natural modeling language for congestion-aware network design problems.

Two service regimes are analyzed. In the disaggregated regime, cache hits and cache misses are processed through independent queueing subsystems, capturing heterogeneous service dynamics in a structured way. In the integrated regime, both request classes interact within a shared congestion system modeled as an {\rm M/H/1} queue. These complementary formulations provide alternative perspectives on how caching mechanisms interact with congestion effects in distributed service architectures.

From an optimization perspective, the resulting models combine binary assignment decisions, continuous facility location variables, nonlinear queueing constraints, and conic representations within exact mixed-integer conic formulations. The paper derives explicit conic liftings for all nonlinear performance expressions and shows that the resulting models can be efficiently handled by modern mixed-integer conic optimization solvers. In this sense, the proposed framework unifies location science, queueing theory, and conic optimization within a single modeling paradigm.

The framework is also flexible in terms of service-quality objectives. It accommodates multiple criteria, including total response time minimization, worst-case performance, CVaR-based risk measures, ordered weighted aggregation, norm-based objectives, and exponential penalties. This versatility allows the model to capture both efficiency-oriented and risk-averse design perspectives, making it suitable for a wide range of CDN planning scenarios.

Finally, several directions for future research are identified. First, large-scale computational studies using realistic Internet topologies and traffic traces are needed to evaluate scalability and practical performance. Second, extending the framework to dynamic and time-dependent environments represents a natural next step, including time-varying demand, evolving content popularity, adaptive caching strategies, and stochastic capacity fluctuations. Such extensions may lead to multi-period stochastic or robust optimization models.

Another important direction is the integration of cache placement decisions into the optimization framework. In the current formulation, cache-hit probabilities are exogenous, although in practice they depend on caching policies, storage capacities, and replication strategies. Joint optimization of caching, replication, and network design would significantly enhance the realism and applicability of the model.

From a methodological standpoint, future work may focus on stronger conic reformulations, tighter relaxations, and decomposition methods such as Benders decomposition, branch-and-cut, and perspective reformulations. Developing scalable algorithms tailored to the specific queueing-induced conic structure remains an important research challenge. Finally, although the paper focuses on CDNs, the proposed framework extends naturally to other distributed service systems such as edge computing, cloud-fog architectures, telecommunications networks, transportation systems, and latency-sensitive AI inference platforms.

\vspace*{0.4cm}

\noindent{\bf Funding: } 
This research has been financially supported by grants PID2020-114594GB-C21 and PID2024-156594NB-C21 funded by MICIU/AEI/10.13039/501100011033; FEDER + Junta de Andaluc\'ia project C‐EXP‐139‐UGR23; the IMAG--Mar\'ia de Maeztu grant CEX2020-001105-M/AEI/10.13039/501100011033; and the IMUS--Mar\'ia de Maeztu grant CEX2024-001517-M/AEI/10.13039/501100011033.



\appendix

\section{Products by Binary Decisions: McCormick-Type Envelopes}\label{app_McCormick}

The functions arising in the models considered here depend jointly on continuous variables and binary allocation decisions, and their structure involves compositions of affine mappings, reciprocal transformations, and weighted sums. A central observation is that these functions can be decomposed into simpler components that admit conic representations. By exploiting closure properties of conic representability under affine composition, addition, and perspective transformations, one can construct extended formulations for the full expressions in a systematic way.

A fundamental ingredient in this construction is the convexification of products involving binary variables. Given a binary variable $z \in \{0,1\}$, a function $f:\mathcal{V} \subseteq \mathbb{R}^n \to \mathbb{R}$ with finite bounds $LB \le f(v) \le UB, \forall v\in\mathcal{V}$, and the expression $w = z f(v)$, the convex hull of the graph of this product is described by the inequalities
$$
LB\, z \le w \le UB\, z,
$$
$$
f(v) - UB (1-z) \le w \le f(v) - LB (1-z).
$$
These inequalities correspond to the McCormick envelopes~\citep{McCormick1976} for the case in which one factor is binary, and they yield an exact representation of the product. Indeed, if $z=0$ the system enforces $w=0$, while if $z=1$ it implies $w=f(v)$.

Equivalently, this formulation describes the convex hull of the disjunction
$$
(z,v,w) \in 
(
\{0\} \times [LB,UB] \times \{0\})\cup
(\{1\} \times [LB,UB] \times \{f(v)\}),
$$
and therefore introduces no relaxation gap when valid bounds are available. In the particular case $LB=0$, which frequently arises in our setting since many quantities are nonnegative, the envelopes simplify to
\begin{align*}
0 &\le w \le UB\, z,\\
f(v) - UB (1-z) &\le w \le f(v).
\end{align*}
When these constraints are embedded in problems whose objective drives $f(v)$, and therefore $w$, to decrease (analogously, to increase) towards its smallest (largest) feasible value, then some of the upper (lower) bounding inequalities may become redundant. In particular, constraints of the form $w \le UB z$ or $w \le f(v)$ can often be omitted without affecting optimality. This observation can be exploited to slightly reduce the size of the resulting formulations. A particularly common case arises when $f$ is the identity function, in which case the above system provides an exact linearization of the bilinear relation $w = z v$.

The importance of these constructions lies in their ability to couple binary allocation decisions with conic constraints while preserving convexity. As will be shown in the subsequent sections, the nonlinear performance expressions that arise in the CDN design problem can be fully expressed using functions that admit such representations. This yields a unified and tractable modeling framework in which conic lifting is not auxiliary, but intrinsic to the formulation.

\section{Instance Generation from Internet Topology Data}\label{app:instances}

The computational study is based on instances derived from the CAIDA MIDAR Internet topology dataset~\citep{ITDK}. The construction procedure follows a structured preprocessing pipeline designed to preserve the main structural properties of the original Internet graph while generating instances of controlled size.

The generation process uses three components of the CAIDA dataset: geographical node information, router-level connectivity data, and autonomous-system identifiers. The overall procedure consists of four stages.

First, a subset of $20{,}000$ nodes is extracted from the geographical database using reservoir sampling directly from the compressed files. This avoids loading the full dataset into memory while preserving a uniform random sample. For each selected node, the latitude and longitude coordinates define its spatial position $a_i \in \mathbb{R}^2$.

Second, the degree of each sampled node is computed using the complete connectivity graph. Importantly, the degree is measured with respect to the original CAIDA topology rather than the sampled subgraph. Thus, if ${\rm deg}_i$ denotes the degree of node $i$, this quantity reflects its global structural importance within the Internet infrastructure.

Third, the origin servers are selected deterministically as the $8$ sampled nodes with largest degree. This choice reflects the interpretation of highly connected routers as Internet backbone hubs naturally associated with origin infrastructure.

Finally, the demand points are generated by sampling $|\I|$ nodes uniformly at random from the remaining nodes after removing the selected origins. Each demand point inherits both its geographical coordinates and degree information from the underlying topology.

The request intensities are designed to combine structural heterogeneity with stochastic variability. Let $\widetilde{\deg}_i \in [0,1]$ denote the normalized degree of node $i$. Independently, Gamma random variables $\gamma_i \sim \Gamma(2,1)$ are generated and normalized into $\widetilde{\gamma}_i \in [0,1]$. The request intensity associated with demand point $i$ is then defined as
$$
\xi_i
=
0.7(0.2+\widetilde{\deg}_i)
+
0.3(0.2+\widetilde{\gamma}_i),
$$
followed by the normalization
$$
\frac{1}{|\I|}
\sum_{i\in\I}\xi_i
=
1.
$$

This construction yields heterogeneous traffic patterns positively correlated with node connectivity while avoiding purely deterministic structures. The additive constant prevents nearly inactive demand points, whereas the convex combination balances topology-driven and stochastic variability.

The cache-hit probabilities are generated as functions of the node degree. Specifically, for each demand point,
$$
p_i
=
0.55
+
(0.95-0.55)
\frac{\deg_i-\deg_{\min}}
{\deg_{\max}-\deg_{\min}},
$$
where $\deg_{\min}$ and $\deg_{\max}$ denote the minimum and maximum degree values among the selected demand nodes. Consequently, highly connected regions exhibit larger cache-hit probabilities, reflecting the empirical observation that highly active regions tend to benefit from improved caching efficiency.

All random operations are controlled through fixed seeds, and five independent replications are generated for each parameter configuration.
\end{document}

%% file: 00_macros.tex
\usepackage{geometry}

\usepackage{amsmath, amssymb, amsfonts, amsthm}
\allowdisplaybreaks
\usepackage{mathtools}
\usepackage{bm}

\usepackage{graphicx}
\usepackage{subcaption}
\usepackage{tikz}
\usetikzlibrary{arrows.meta, positioning, calc, fit,
                shapes.geometric, decorations.pathreplacing}
\usepackage{arydshln}
\usepackage{dashrule}
\usepackage{empheq}

\usepackage{booktabs}
\usepackage{multirow}
\usepackage{longtable}
\usepackage{makecell}

\usepackage{algorithm}
\usepackage{algpseudocode}

\usepackage{xcolor}
\usepackage{tcolorbox}
\tcbuselibrary{breakable}

\usepackage{natbib}

\usepackage{adjustbox}
\usepackage{placeins}
\usepackage{enumitem}
\usepackage{hyperref}

\hypersetup{
  colorlinks = true,
  linkcolor  = blue,
  citecolor  = blue,
  urlcolor   = blue
}

\newtheorem{theorem}{Theorem}
\newtheorem{proposition}{Proposition}
\newtheorem{lemma}{Lemma}
\newtheorem{corollary}{Corollary}
\newtheorem{remark}{Remark}

\DeclareMathOperator*{\minimize}{minimize}
\DeclareMathOperator{\st}{subject\; to}

\newcommand{\dsum}{\displaystyle\sum}

\newcommand{\R}{\mathbb{R}}

\newcommand{\E}{\mathbb{E}}

\newcommand{\LL}{\mathcal{L}}

\renewcommand{\SS}{\mathcal{S}}

\newcommand{\K}{\mathcal{K}}
\newcommand{\I}{\mathcal{I}}
\newcommand{\J}{\mathcal{J}}

\newcommand{\D}{\mathrm{D}}

\newcommand{\T}{\mathrm{T}}

\definecolor{terracota}{RGB}{130,36,31}
\definecolor{punkpink}{RGB}{255,20,147}
\newcommand{\highlight}[1]{{\color{terracota} #1}}

\algnewcommand\algorithmicinput{\textbf{Input:}}
\algnewcommand\algorithmicoutput{\textbf{Output:}}
\algnewcommand\Input{\item[\algorithmicinput]}
\algnewcommand\Output{\item[\algorithmicoutput]}

\let\origmaketitle\maketitle
\def\maketitle{
	\begingroup
	\def\uppercasenonmath##1{} 
	\let\MakeUppercase\relax 
	\origmaketitle
	\endgroup
}

\usepackage{graphicx}
\usepackage{booktabs}
\usepackage{subcaption}
\usepackage{multirow}

\usepackage{fontawesome5}